\newcommand{\calA}{\mathcal{A}}
\newcommand{\calB}{\mathcal{B}}
\newcommand{\calL}{\mathcal{L}}
\newcommand{\bbN}{\mathbb{N}}
\newcommand{\bbZ}{\mathbb{Z}}
\newcommand{\bbC}{\mathbb{C}}
\newcommand{\bbT}{\mathbb{T}}
\newcommand{\Hom}{\operatorname{Hom}}
\newcommand{\CP}{\operatorname{CP}}
\newcommand{\UCP}{\operatorname{UCP}}
\newcommand{\CB}{\operatorname{CB}}
\newcommand{\QC}{\operatorname{QC}}
\newcommand{\TC}{\operatorname{TC}}
\newcommand{\id}{\operatorname{id}}
\newcommand{\Tr}{\operatorname{Tr}}
\newcommand{\Choi}{\operatorname{Choi}}
\newcommand{\mk}{\operatorname{mk}}
\newcommand{\Cred}{C^*_r}
\theoremstyle{definition}
\newtheorem{definition}{Definition}[section]
\theoremstyle{plain}
\newtheorem{theorem}[definition]{Theorem}
\newtheorem{proposition}[definition]{Proposition}
\newtheorem{corollary}[definition]{Corollary}
\newtheorem{lemma}[definition]{Lemma}
\theoremstyle{remark}
\newtheorem{remark}[definition]{Remark}
\newtheorem{example}[definition]{Example}
\numberwithin{equation}{section}
\title[]{Metrics on completely positive maps via noncommutative geometry}
\date{February 6, 2026}
\author[1]{Are Austad}
\address{Are Austad, Department of Mathematics, University of Oslo, P.O.Box 1053 Blindern, 0316 Oslo, Norway}
\email{areaus@math.uio.no}
\author[2]{Erik B\'edos}
\address{Erik B\'edos, Department of Mathematics, University of Oslo, P.O.Box 1053 Blindern, 0316 Oslo, Norway}
\email{bedos@math.uio.no}
\author[3]{Jonas Eidesen}
\address{Jonas Eidesen, Department of Mathematics, University of Oslo, P.O.Box 1053 Blindern, 0316 Oslo, Norway}
\email{jonaeid@math.uio.no}
\author[4]{Nadia S.~Larsen}
\address{Nadia S.~Larsen, Department of Mathematics, University of Oslo, P.O.Box 1053 Blindern, 0316 Oslo, Norway}
\email{nadiasl@math.uio.no}
\author[5]{Tron Omland}
\address{Tron Omland, Norwegian National Security Authority (NSM)
\and Department of Mathematics, University of Oslo, P.O.Box 1053 Blindern, 0316 Oslo, Norway}
\email{tron.omland@gmail.com}
\keywords{Stability, chaining, quantum metrics, unital completely positive maps,  spectral triples, external Kasparov products, twisted group C*-algebras, quantum information theory, noncommutative geometry}
\subjclass[2020]{Primary: 81R60 ; Secondary: 58B34, 81P45, 81P47, 46L89}
\begin{document}

\begin{abstract}
    We study methods of inducing metrics on unital completely positive maps by employing seminorms arising in noncommutative geometry. Our main approach relies on the development of an infinite-dimensional $C^*$-algebraic analogue of the Choi-Jamio\l{}kowski isomorphism. Under suitable conditions, we show that the induced metrics satisfy the quantum information theoretic properties of stability and chaining. Moreover, we show how to generate such metrics using constructions native to noncommutative geometry, by for example using external Kasparov products of spectral triples.
\end{abstract}

\maketitle

\section{Introduction}

Trace-preserving completely positive maps are ubiquitous in quantum information theory as, among others, the means to model evolution of quantum systems, these generally being described by matrix algebras. Through a standard duality perspective, there is a companion class of unital, completely positive maps, and these in turn are prominent in operator algebras and noncommutative geometry, especially in relation to deep questions about approximation properties of operator algebras (in an infinite-dimensional setting).  Among many questions which have been asked about trace-preserving completely positive maps one finds those about existence of a metric structure, or distance, capable of capturing permanence properties such as chaining, a form of coherent composition, or stability under tensorial expansion of the system with an ancilla system present, cf. \cite{Gilchrist, BorovykVellambi25}. In this work we propose a framework for developing a metric structure on the set of (unital) completely positive maps between two, not necessarily finite-dimensional $C^*$-algebras, by employing constructions native to noncommutative geometry.

Attempts to develop a metric structure have so far concentrated on the finite-dimensional setting, namely matrix algebras \cite{Gilchrist, BorovykVellambi25}, or, in the operator algebraic framework, on the von Neumann algebraic setting, where normal states are a core ingredient \cite{AnshuJekelLandry25, AraizaJungeWu25, DawsQuantumGraphs25}. An exception to this is the Wasserstein type metric of order one on unital completely positive maps between tensor products of unital $C^*$-algebras introduced in \cite{DM23}. In a different direction we find work on the Bures distance for completely positive maps between $C^*$-algebras from several authors, see \cite{KretschmannSchlingemannWerner2008,BhatSumesh2013}.

The main thrust of the present work is the formalization and study of two methods of inducing metrics on the set of unital completely positive maps  between $C^*$-algebras through the use of seminorms, providing a rich source of examples coming from noncommutative geometry.  In particular, it provides a link to the theory of compact quantum metric spaces, as pioneered by Connes and Rieffel, see, e.g., \cite{Connes1989, Rieffel98, RieffelMetricsOnStateSpaces1999, Rieffel02, Rieffel2004}. Very satisfyingly, we find that both stability and chaining have natural and precise formulations and, moreover, are in place for large classes of examples. 
Questions regarding metrics having stability and chaining are not standard in the context of quantum metrics in noncommutative geometry, and we hope that this work opens up a new fruitful line of interaction with quantum information theory. 

The investigation of these questions in such a large generality necessitates the development of novel technical tools not previously found in the literature. Of note, we develop a $C^*$-algebraic Choi-Jamio\l{}kowski type embedding to account for the fact that unlike matrix algebras, general $C^*$-algebras are neither nuclear nor necessarily isomorphic to their opposite algebras.

Before we delve into the details of the paper, we take a moment to briefly describe the origins of the notions of stability and chaining. In \cite{Gilchrist},  see also \cite{BorovykVellambi25}, the authors outline a program to define distances, or metrics, on sets of quantum processes, in particular covering \emph{quantum channels}, i.e., trace-preserving completely positive maps between matrix algebras $A$ and $B$. Several potentially meaningful distances are put forward, along with a list of  desirable properties these distances should satisfy.  
The first of two particularly important properties a (possibly extended) metric $d$ should satisfy is \emph{stability}, in the sense that if $\mathcal{F}, \mathcal{G}$ are quantum channels from $A$ to $B$, then $d(\id_n \otimes \mathcal{F}, \id_n \otimes \mathcal{G}) = d(\mathcal{F},\mathcal{G})$ must hold for every $n \in \bbN$. Here $\id_n \otimes \mathcal{F}$ and $\id_n \otimes \mathcal{G}$ are amplifications of the initial channels acting between  $M_n(\bbC) \otimes A$ and $M_n(\bbC) \otimes B$. From a quantum information theoretic perspective, this property is supposed to reflect the fact that an unrelated ancillary system should not affect the distance between initial quantum channels modelling quantum processes.  In \cite{Gilchrist} and \cite{BorovykVellambi25} it is  expected that an analogue of $d$ exists on all amplifications  for every $n \in \bbN$.

The second key property a distance on quantum channels should feature is \emph{chaining}, which aims to capture the idea that if one divides a process into smaller steps, the total error should be smaller than the sum of the stepwise errors. Here one interprets the distance between quantum processes as a measure of the error between an ideal process and an observed process. Roughly, this is expressed as $d(\mathcal{G}_1\circ \mathcal{F}_1, \mathcal{G}_2\circ \mathcal{F}_2)\leq d(\mathcal{G}_1, \mathcal{G}_2)+d(\mathcal{F}_1,\mathcal{F}_2)$, assuming the quantities are meaningfully defined (see \cref{def:chaining}).

Among the metrics highlighted in \cite{Gilchrist} as the most promising, especially with regard to stability and chaining,  there are one arising from a stabilizing procedure and one based on the Choi-Jamio\l{}kowski isomorphism. The more recent overview paper \cite{BorovykVellambi25} reinforces this view, while also presenting some new definitions of distances. 

Our concern here is with metrics on sets of completely positive maps. The set $\UCP(A, B)$ of unital completely positive maps between unital $C^*$-algebras $A$ and $B$ is known to be dual to the class of quantum channels from $B$ to $A$ (see for instance \cite{JKP2009, Wat2009}). Thus we translate the discussions in \cite{Gilchrist} and \cite{BorovykVellambi25} to the setting of $\UCP(A, B)$ whenever relevant and without further mention. 
In this spirit, we put forward definitions of stability and chaining in an infinite-dimensional setting in \cref{sec:discussion-inducing-metrics}. 
We will establish these properties in two distinct directions: one involves a stabilizing process similar to \cite{Gilchrist, BorovykVellambi25}, but with the difference that it starts from an extended metric $d_L$ arising from a  seminorm $L$ on a unital $C^*$-algebra $A$, see \emph{pullback-induced extended metrics} in \cref{def:UCP metric D} and \cref{UCP}. 
In the main results of \cref{UCP}, we
find sufficient conditions under which stability and chaining are satisfied for these extended metrics, cf.~\cref{Ln2} and \cref{cor:chaining-for-pullback-metric}, respectively. 
 
Another way of inducing an extended metric on $\UCP(A,B)$ motivated by \cite{Gilchrist} involves embedding $\UCP(A,B)$ into the state space of a higher-dimensional $C^*$-algebra using the Choi-Jamio\l{}kowski isomorphism. If one naively assumes that such an embedding exists in general, say $\omega \colon \UCP(A,B) \to S(A \otimes_{\rm max} B^{\rm op})$ is such an embedding (in analogy with the constructions from \cite{Gilchrist, BorovykVellambi25}), then, given a seminorm $L$ on $A \otimes_{\rm max} B^{\rm op}$, one can define an \emph{embedding-induced extended metric} 
\begin{equation*}
    \Delta_{L}(F,G) := \mk_L(\omega(F),\omega(G))\text{ for }F,G\in \UCP(A,B),
\end{equation*}  
where $\mk_L$ is the Monge-Kantorovi\v{c} extended metric associated with $L$. Monge-Kantorovi\v{c} metrics are central to noncommutative geometry, see \cref{sec:lipschitz-seminorms-and-cqms} for a short introduction. However, such a Choi-Jamio\l{}kowski embedding is not known to exist for general $C^*$-algebras. We develop such an embedding in \cref{sec:tracial-Cstar-algebras-and-cp-maps} where we consider a $C^*$-algebra $B$ with a faithful trace $\tau$, and we define the map $\omega_\tau \colon \Hom(A,B) \to \Hom(A \odot B^{\rm op},\bbC)$ which gives an infinite-dimensional analogue of the map appearing in the Choi-Jamio\l{}kowski isomorphism. This map always extends to give a well-defined map into $\Hom(A \otimes_{\rm max} B^{\rm op},\bbC)$, under which $\UCP(A,B)$ embeds into $S(A \otimes_{\rm max} B^{\rm op})$, cf.~\cref{thm:maps with states as associated functionals}. In fact, \cref{thm:maps with states as associated functionals} precisely characterizes which maps from $A$ to $B$ that correspond to states on $A \otimes_{\rm max} B^{\rm op}$. We call these maps \emph{trace channels}, see \cref{def:trace channels}, and they will be the main object of study in \cref{sec:tracial-Cstar-algebras-and-cp-maps} and \cref{sec:induced-metrics-between-cp-maps}. 

A significant part of the present work consists of formalizing an infinite-dimensional analogue of the Choi-Jamio\l{}kowski map for $C^*$-algebras. Related work has appeared in several recent papers: In the quantum graph literature an analogue appears for finite-dimensional $C^*$-algebras, cf.~\cite{Wasilewski2024,CourtneyGanesanWasilewski25}. In a von Neumann algebraic setting, there exist some infinite-dimensional versions of the Choi-Jamio\l{}kowski isomorphism, see \cite{Haap2021, HKS2024} and references therein.

The technical difficulty one encounters when moving from a von Neumann algebraic setup to a $C^*$-algebraic setting pertains mainly to the need to work with state spaces of tensor products of $C^*$-algebras and their opposite algebras. This difficulty does not appear in the finite-dimensional setting as $M_n(\bbC)$ is nuclear and isomorphic to its opposite algebra.  
Neither of these facts are true for general $C^*$-algebras. 
Thus, without requiring nuclearity of the $C^*$-algebras in question, certain maps and their continuity properties very much depend on whether one employs the minimal or the maximal tensor product. A celebrated result of Kirchberg, cf.~\cite{Kir93,Kir94}, generalizing a landmark result of Connes, cf.~\cite{Connes1976}, establishes continuity with respect to the minimal tensor product of a natural map $\mu_\tau$, cf.~\cref{thm:Kirchberg's Theorem for continuity of maximally entangled state}. The map $\mu_\tau$ is dependent on a trace $\tau$, and Kirchberg's Theorem states that $\mu_\tau$ is continuous in the spatial norm if and only if the corresponding trace $\tau$ is amenable. This allows us to define a tractable version of the Choi-Jamio\l{}kowski map for maps between $C^*$-algebras, where the target $C^*$-algebra is unital and admits an amenable trace, cf.~\cref{thm:maps with states as associated functionals}. 

We have chosen to formulate our definition of a Choi-Jamio\l{}kowski map for $C^*$-algebras in the most general case available to us, although admittedly the scenario of non-nuclear $C^*$-algebras is perhaps of little appeal, or apparent relevance, to questions in quantum information theory. Nevertheless, we have done so in anticipation of further interaction between the fields.

We are then able to construct extended metrics on the space of trace channels by employing techniques from noncommutative geometry. 
In the main results of \cref{sec:induced-metrics-between-cp-maps} we find, under reasonable compatibility assumptions, sufficient conditions for stability and chaining to hold, see \cref{thm:stability 1,thm:chaining 1}.

In \cref{sec:applications}, we illustrate their validity in naturally occurring examples by once more appealing to constructions in noncommutative geometry. In fact, the main result of \cref{sec:applications}, \cref{thm:stability 2}, unexpectedly yields that by iteratively employing the external Kasparov product of spectral triples, we may generate stable sequences of extended metrics. 
Furthermore, the sequences of extended metrics constructed using \cref{thm:stability 2} reflect the physical interpretation that the internal structure of an ancillary system does not affect quantum processes in the system under consideration. It is then altogether  surprising that this property is guaranteed by the external Kasparov product.
 
Moreover, we prove that chaining is satisfied for extended metrics arising from spectral triples coming from proper length functions on countable amenable groups if we restrict the unital completely positive maps in question to those arising as Fourier multipliers, cf.~\cref{thm:chaining 3}.

A large class of examples exhibiting the best possible behavior where chaining and stability hold simultaneously is provided by countable amenable groups with natural length functions, see \cref{example:group-example-chaining-and-stability}. The construction of seminorms coming from spectral triples is of importance in the study of compact quantum metric spaces in noncommutative geometry. It has been investigated in a variety of different settings, e.g.~for group $C^*$-algebras \cite{OzawaRieffel, ChristRieffel}, crossed products \cite{HawkinsSkalskiWhiteZacharias, Klisse2024, AustadKaadKyed}, and quantum groups \cite{BhowmickVoigtZacharias, AguilarKaad2018, KaadKyed2025, AustadKyed2025}, though this is far from a complete list. In a related line of research, \cite{AnshuJekelLandry25} investigates seminorms (which a priori do not come from spectral triples) on quantum permutation groups for the purpose of studying the quantum Wasserstein distance.

The structure of the paper is as follows. We collect a number of important notions and set  notation in \cref{sec:prelims}. In \cref{sec:discussion-inducing-metrics} we formalize the notions of stability and chaining mathematically and discuss pullback-induced and embedding-induced extended metrics on sets of unital completely positive maps. We study stability and chaining for pullback-induced extended metrics in \cref{UCP}. In \cref{sec:tracial-Cstar-algebras-and-cp-maps} we study the $C^*$-algebraic Choi-Jamio\l{}kowski map and use it to introduce the notion of trace channels. This map is then used in \cref{sec:induced-metrics-between-cp-maps} to define and study embedding-induced extended metrics on the set of trace channels between $C^*$-algebras. We especially focus on criteria guaranteeing that these extended metrics satisfy stability and chaining. Lastly, in  \cref{sec:applications} we use machinery from noncommutative geometry to show that the necessary conditions for stability and chaining from \cref{sec:induced-metrics-between-cp-maps} are automatically satisfied for extended metrics arising from the external Kasparov product of spectral triples. We in particular focus on spectral triples for twisted group $C^*$-algebras arising from proper length functions on countable amenable groups.

\subsection*{Acknowledgments}
This research was funded by The Research Council of Norway [project 324944] and [project 345433]. The third author acknowledges the financial support of The Norwegian National Security Authority.

\section{Preliminaries}\label{sec:prelims}

\subsection{Tensor norms, opposite algebras, and traces}\label{sec:tensor-norms-etc}

By a $*$-algebra, we will mean a complex algebra $A$ equipped with an involutive, anti-multiplicative, conjugate-linear map, usually denoted by $a \mapsto a^*$ for $a \in A$. An isomorphism between $*$-algebras will then mean an algebra isomorphism which intertwines the involutions. A norm $\|\cdot\|$ on a $*$-algebra $A$ is called a $C^*$-norm when it is submultiplicative, invariant under the involution and satisfies the $C^*$-equality $\|a^* a\| = \|a\|^2$ for all $a\in A$. The completion of $A$ with respect to a $C^*$-norm on $A$ becomes a $C^*$-algebra after extending the multiplication and the involution by continuity.

If $A$ and $B$ are $C^*$-algebras, then the algebraic tensor product $A \odot B$ of $A$ and $B$ becomes a $*$-algebra with respect to the involution determined by $(a \otimes b)^* = a^* \otimes b^*$ for all $a \in A$ and $b \in B$ (see for instance \cite[p.~188-189]{Murphy1990}). Moreover, if $\delta$ is a $C^*$-norm on $A \odot B$, then we  will denote the completion of $A \odot B$ with respect to $\delta$ by $A \otimes_\delta B$. There are two $C^*$-norms in particular that are natural to consider.

First: The spatial norm $\| \cdot \|_{\rm min}$ on $A \odot B$ is defined for any $x \in A \odot B$ by
\begin{equation*}
    \|x\|_{\rm min} = \|(\pi_A \otimes \pi_B)(x)\|_{B(H_A \otimes H_B)},
\end{equation*}
where $\pi_A \colon A \to B(H_A)$ is any faithful $*$-representation of $A$, and $\pi_B \colon B \to B(H_B)$ is any faithful $*$-representation of $B$. It is well-known that the spatial norm is independent of the choices of $\pi_A$ and $\pi_B$. The spatial norm satisfies $\|x\|_{\rm min} \leq \delta(x)$ for any $x \in A \odot B$ and any $C^*$-norm $\delta$ on $A \odot B$. This is known as Takesaki's Theorem, cf.~\cite[Section 6.4]{Murphy1990} or \cite[Section 3.4]{BO} for a proof. We denote the completion of $A \odot B$ with respect to the spatial norm by $A \otimes_{\rm min} B$ and call this the \emph{minimal tensor product of $A$ and $B$}.

Second: The maximal norm $\| \cdot \|_{\rm max}$ on $A \odot B$ is defined for any $x \in A \odot B$ by
\begin{equation*}
    \|x\|_{\rm max} = \sup\{ \|\pi(x)\|_{B(H)} : \pi \colon A \odot B \to B(H) \text{ is a cyclic $*$-representation} \}.
\end{equation*}
We denote the completion of $A \odot B$ with respect to the maximal norm by $A \otimes_{\rm max} B$ and call this the \emph{maximal tensor product of $A$ and $B$}. The maximal tensor product satisfies the following universal property: If $\pi \colon A \odot B \to C$ is any $*$-homomorphism, where $C$ is a $C^*$-algebra, then there exists a unique $*$-homomorphism $A \otimes_{\rm max} B \to C$ extending $\pi$, cf.~\cite[Proposition 3.3.7]{BO}.

The universality of the maximal tensor product implies the following: If $\delta$ is a $C^*$-norm on $A \odot B$, then there exists a unique $*$-homomorphism $q_\delta \colon A \otimes_{\rm max} B \to A \otimes_\delta B$ making the following diagram commute
\begin{equation*}
    \xymatrixcolsep{50pt}
    \xymatrixrowsep{10pt}
    \xymatrix{
        & A \otimes_{\rm max} B \ar[dd]^-{q_\delta} \\
        A \odot B \ar[ur]^-{i_{\rm max}} \ar[dr]_-{i_\delta} & \\
        & A \otimes_\delta B,
    }
\end{equation*}
where $i_{\rm max}$ and $i_\delta$ are the respective inclusions of $A \odot B$. By continuity of all the $*$-homomorphisms involved, and density of $A \odot B$, we get that $q_\delta$ is surjective for any $C^*$-norm $\delta$. A $C^*$-algebra $A$ is \emph{nuclear} if $q_\delta$ is injective for all $C^*$-algebras $B$ and any $C^*$-norm $\delta$ on $A \odot B$. By Takesaki's Theorem, we can equivalently say that $A$ is nuclear if $A \otimes_{\rm max} B$ is $*$-isomorphic to $A \otimes_{\rm min} B$ for every $C^*$-algebra $B$. In this case we will simply use the notation $A \otimes B$ to refer to the completion of $A \odot B$ in the necessarily unique $C^*$-norm on $A \odot B$.

By a \emph{trace} $\tau$ on a $C^*$-algebra $B$ we will mean that $\tau$ is 
a nonzero tracial positive functional on $B$. When $\tau$ is normalized, hence a state on $B$, we will call $\tau$ a \emph{tracial state}. We make this distinction to ease the exposition, and in the case where we discuss matrix algebras, $M_n(\bbC)$, the most natural trace to consider is the canonical (unnormalized) one.

If $B$ is a $C^*$-algebra, we will denote by $B^{\textup{op}}=\{ b^{\rm op}: b\in B\}$ the opposite $C^*$-algebra of $B$, with product given by $b_1^{\rm op}b_2^{\rm op} = (b_2b_1)^{\rm op}$ for all $b_1, b_2\in B$, and involution given by $(b^{\rm op})^* = (b^*)^{\rm op}$ for all $b \in B$. The $C^*$-norm on $B^{\rm op}$ is directly inherited from that of $B$, i.e. for any $b \in B$ the norm of $B^{\rm op}$ is defined by $\|b^{\rm op}\| = \|b\|$. The multiplication in $B$ induces a linear map $\mu \colon B \odot B^{\rm op} \to B$ defined by
\begin{equation*}
    \mu(b_1 \otimes b_2^{\rm op}) = b_1b_2.
\end{equation*}
In the event that $B$ admits a trace $\tau$, we can further compose $\mu$ with $\tau$ to get a linear map $\mu_\tau \colon B \odot B^{\rm op} \to \bbC$, which is determined by the following equation:
\begin{equation*}
    \mu_\tau(b_1 \otimes b_2^{\rm op}) = \tau(b_1b_2).
\end{equation*}
It is natural to ask for which $C^*$-norms on $B \odot B^{\rm op}$ the map $\mu_\tau$ is continuous. For the maximal norm we have the following result which follows by elementary methods, cf.~\cite[p.~21]{Ozawa04} for an exposition.

\begin{lemma}\label{lemma:continuity of maximally entangled state for maximal norm}
    Let $B$ be a $C^*$-algebra, and $\tau$ be a trace on $B$. Then $\mu_\tau$ is continuous in the maximal norm.
\end{lemma}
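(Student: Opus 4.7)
The plan is to realize $\mu_\tau$ as a vector functional of a $*$-representation of $B \odot B^{\rm op}$ and then extend via the universal property of $\otimes_{\rm max}$ recalled just above. By passing to the unitization of $B$ with the natural extension and rescaling of $\tau$ (and using that any $*$-representation of $\tilde B \odot \tilde B^{\rm op}$ restricts to one of $B \odot B^{\rm op}$, so the maximal norm can only decrease under the extension), I may assume without loss of generality that $B$ is unital and $\tau$ is a tracial state.

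Form the GNS triple $(\pi_\tau, H_\tau, \xi_\tau)$ of $\tau$: let $H_\tau$ be the Hilbert space completion of $B/N_\tau$, where $N_\tau = \{b \in B : \tau(b^*b)=0\}$, under the inner product $\langle [b],[c]\rangle = \tau(c^*b)$; let $\pi_\tau$ be the left multiplication representation; and take $\xi_\tau = [1]$. The essential step, where traciality is genuinely used, is to define a $*$-representation $\rho_\tau \colon B^{\rm op} \to B(H_\tau)$ by right multiplication, $\rho_\tau(b^{\rm op})[c] = [cb]$. The trace identity $\tau(xy)=\tau(yx)$, combined with the operator inequality $bb^* \leq \|b\|^2 \cdot 1$, yields
\[
\tau((cb)^*(cb)) = \tau(cbb^*c^*) \leq \|b\|^2 \, \tau(c^*c),
\]
so $\rho_\tau(b^{\rm op})$ is well defined with $\|\rho_\tau(b^{\rm op})\| \leq \|b\|$. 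Short trace computations then verify that $\rho_\tau$ is multiplicative on $B^{\rm op}$ and preserves adjoints, while associativity of multiplication in $B$ forces $\pi_\tau$ and $\rho_\tau$ to have commuting ranges on the dense subspace $B/N_\tau$ and hence everywhere.

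The pair of commuting $*$-representations assembles into a $*$-homomorphism $\Phi \colon B \odot B^{\rm op} \to B(H_\tau)$ given by $\Phi(b_1 \otimes b_2^{\rm op}) = \pi_\tau(b_1)\rho_\tau(b_2^{\rm op})$. By the universal property of the maximal tensor product, $\Phi$ extends uniquely to a $*$-homomorphism $\tilde\Phi \colon B \otimes_{\rm max} B^{\rm op} \to B(H_\tau)$, which is automatically contractive. The elementary-tensor identity $\Phi(b_1 \otimes b_2^{\rm op})\xi_\tau = [b_1 b_2]$ combined with linearity gives
\[
\mu_\tau(x) = \langle \tilde\Phi(x)\xi_\tau, \xi_\tau\rangle \quad \text{for all } x \in B \odot B^{\rm op},
\]
whence $|\mu_\tau(x)| \leq \|x\|_{\rm max}$, which is the desired continuity. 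The single step requiring genuine work is verifying that right multiplication descends to a bounded $*$-representation of $B^{\rm op}$ commuting with $\pi_\tau$; the rest is a textbook application of the GNS construction and the universal property of $\otimes_{\rm max}$.
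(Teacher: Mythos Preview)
Your proof is correct and is precisely the standard ``elementary methods'' argument the paper alludes to (via the reference to Ozawa): realize $\mu_\tau$ as the vector functional $\langle \tilde\Phi(\,\cdot\,)\xi_\tau,\xi_\tau\rangle$ for the $*$-representation of $B\odot B^{\rm op}$ built from the commuting left and right GNS actions, then invoke the universal property of $\otimes_{\rm max}$. The unitization reduction and the verification that right multiplication yields a bounded $*$-representation of $B^{\rm op}$ commuting with $\pi_\tau$ are carried out correctly.
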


For the spatial norm, we have the following theorem due to Kirchberg.

\begin{theorem}[Kirchberg]\label{thm:Kirchberg's Theorem for continuity of maximally entangled state}
    Let $B$ be a unital $C^*$-algebra with a trace $\tau$. Then $\mu_\tau$ is continuous in the spatial norm if and only if $\tau$ is amenable.
\end{theorem}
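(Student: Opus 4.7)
My plan is to translate the question about continuity of $\mu_\tau$ into a question about the GNS data of $\tau$, and then invoke the Effros--Lance / Connes characterization of semidiscreteness. Let $(H_\tau, \pi_\tau, \xi_\tau)$ be the GNS triple associated with $\tau$. Since $\tau$ is tracial, the map $\pi_\tau(b)\xi_\tau \mapsto \pi_\tau(b^*)\xi_\tau$ extends to an antiunitary $J \colon H_\tau \to H_\tau$. Set $\pi_\tau^{\rm op}(b) := J\pi_\tau(b)^*J$; this gives a $*$-representation of $B^{\rm op}$ whose image commutes with $\pi_\tau(B)$. One then obtains a $*$-homomorphism $m \colon B \odot B^{\rm op} \to B(H_\tau)$ with $m(a \otimes b^{\rm op}) = \pi_\tau(a)\pi_\tau^{\rm op}(b)$, and a direct computation on elementary tensors gives
\begin{equation*}
    \mu_\tau(a \otimes b^{\rm op}) = \tau(ab) = \langle \pi_\tau(a)\pi_\tau^{\rm op}(b)\xi_\tau, \xi_\tau\rangle = \omega_{\xi_\tau}(m(a \otimes b^{\rm op})).
\end{equation*}
Thus continuity of $\mu_\tau$ in the spatial norm is controlled by continuity of $m$ in the spatial norm. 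Writing $M := \pi_\tau(B)''$, Tomita's theorem yields $\pi_\tau^{\rm op}(B)'' = JMJ = M'$, so the normal extension of $m$ is precisely the product representation $M \odot M' \to B(H_\tau)$.

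For the direction $(\Leftarrow)$, I would take ``$\tau$ amenable'' in the standard sense that $M$ is injective. By Connes's theorem, $M$ is then semidiscrete, which is equivalent (Effros--Lance) to the product map $M \odot M' \to B(H_\tau)$ being continuous in the spatial norm. Composing $\pi_\tau \otimes \pi_\tau^{\rm op} \colon B \otimes_{\rm min} B^{\rm op} \to M \otimes_{\rm min} M'$ (which is continuous in the spatial norm by functoriality of $\otimes_{\rm min}$) with this product map, and then with the vector state $\omega_{\xi_\tau}$, yields an extension of $\mu_\tau$ to $B \otimes_{\rm min} B^{\rm op}$.

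For the harder direction $(\Rightarrow)$, the plan is to bootstrap a spatial extension of $\mu_\tau$ into injectivity of $M$. A state $\widetilde\mu$ on $B \otimes_{\rm min} B^{\rm op}$ extending $\mu_\tau$ can be exploited via a slice/Stinespring argument to produce a u.c.p.\ map $\Phi \colon B(H_\tau) \to M$ that is the identity on $M$, thereby witnessing injectivity. Concretely, for each $T \in B(H_\tau)$, one wants to construct a $B$-central functional on $B(H_\tau)$ extending $\tau$, and $\widetilde\mu$ is the key tool: given $T$, one slices $\widetilde\mu$ against $T$ in the ``right factor'' (using that $\pi_\tau^{\rm op}(B)$ generates $M'$) to define an element of $M$, and spatial continuity of $\widetilde\mu$ is what guarantees this slicing is well-defined on all of $B(H_\tau)$ rather than just on the image of $\pi_\tau \odot \pi_\tau^{\rm op}$. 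Equivalently, and perhaps cleaner, one extends $\widetilde\mu$ by Arveson's theorem to a state on $B(H_\tau) \otimes_{\rm min} B(H_\tau)$ and pulls back to construct a hypertrace on $B(H_\tau)$ that is $M$-central and extends $\tau$; this is exactly the definition of amenability of $\tau$ in the Connes--Bekka sense, and is equivalent to injectivity of $M$.

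The main obstacle is unquestionably the implication $(\Rightarrow)$. The passage from $C^*$-algebraic spatial continuity of a single functional to a von Neumann algebraic approximation property for the double commutant $M$ requires combining a Hahn--Banach style extension of $\widetilde\mu$, Arveson's extension theorem, and Tomita--Takesaki theory to identify the resulting object as an $M$-central state. This is precisely the content of Kirchberg's argument generalizing Connes's original proof for factors with separable predual, and unlike the reverse direction it cannot be obtained by a formal functorial calculation.
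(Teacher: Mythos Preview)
The paper does not supply its own proof of this theorem: immediately after the statement it records that this is one of several equivalences established by Kirchberg in \cite{Kir93,Kir94} and points to \cite[Theorem~6.2.7]{BO} for a full argument. There is thus no in-paper proof to compare against; your sketch should be measured against the standard literature proof.

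Your plan is broadly the right shape, and the GNS/Tomita setup identifying $\mu_\tau$ with $\omega_{\xi_\tau} \circ m$ is exactly how one begins. Two points deserve care. First, in $(\Leftarrow)$ you replace the paper's definition of an amenable trace (\cref{def:amenability of tau}: existence of a hypertrace) by injectivity of $M = \pi_\tau(B)''$ and then invoke Connes's theorem and Effros--Lance. These conditions are indeed equivalent, but that equivalence is itself part of the Kirchberg/Brown--Ozawa cycle you are aiming to establish, so you are importing machinery at least as deep as the target statement; the proof in \cite[Theorem~6.2.7]{BO} instead passes through a finite-dimensional approximation characterization of $\tau$ and avoids Connes's theorem. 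Second, in $(\Rightarrow)$ your description is vague precisely at the crucial step. Once $\mu_\tau$ is extended (Hahn--Banach for states suffices; Arveson is not needed) to a state $\psi$ on $B(H) \otimes_{\min} B^{\rm op}$ for some faithful $B \subset B(H)$, one sets $\varphi(T) := \psi(T \otimes 1_{B^{\rm op}})$; the reason $\varphi$ is $B$-central is that each $u \otimes (u^*)^{\rm op}$ with $u \in B$ unitary lies in the multiplicative domain of $\psi$, since $\psi$ agrees with $\mu_\tau$ on $B \odot B^{\rm op}$ and these unitaries achieve equality in Cauchy--Schwarz there. That multiplicative-domain calculation is the actual content of the implication, and your slicing/Stinespring language obscures rather than captures it.
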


These are only two of several equivalent statements proved by Kirchberg distributed over the papers \cite{Kir93} and \cite{Kir94}. A proof of these equivalences can be found in \cite[Theorem 6.2.7]{BO}. Note that by amenability of $\tau$ we mean that the tracial state $\frac{1}{\tau(1_B)}\tau$ is amenable in the following sense:

\begin{definition}[\cite{BO}]\label{def:amenability of tau}
    Let $A \subset B(H)$ be a concretely represented unital $C^*$-algebra. A state $\tau$ on $A$ is called an \emph{amenable trace} if there exists a state $\varphi$ on $B(H)$ such that
    \begin{enumerate}
        \item $\varphi|_A = \tau$, and
        \item $\varphi(uTu^*) = \varphi(T)$ for every unitary $u \in A$ and $T \in B(H)$.
    \end{enumerate}
\end{definition}

\begin{remark}
    The state $\varphi$ in \cref{def:amenability of tau} is sometimes referred to as a \emph{hypertrace} or an \emph{$A$-central state}.
\end{remark}

\begin{remark}
    In general, we will denote the unique extension of $\mu_\tau$ to $B \otimes_{\rm max} B^{\rm op}$ by $\mu_\tau^{\rm max}$. Likewise, if $\mu_\tau$ extends to a continuous map on $B \otimes_{\rm min} B^{\rm op}$, we denote the extension by $\mu_\tau^{\rm min}$. In case $B$ is nuclear, no extra decoration will be made to denote the extension of $\mu_\tau$ to $B \otimes B^{\rm op}$.
\end{remark}

Note that in \cite{BO}, $\mu_\tau$ is defined in a slightly different way, which makes it clear that $\mu_\tau$ is positive. We include a proof of this fact for the sake of completeness.

\begin{lemma}\label{lemma:positivity of maximally entangled state}
    Let $B$ be a $C^*$-algebra with a trace $\tau$. Then the linear functional $\mu_\tau^{\rm max}$ is a positive functional on $B \otimes_{\rm max} B^{\rm op}$. Furthermore, if $B$ is unital and $\tau$ is amenable, then $\mu_\tau^{\rm min}$ is a positive functional on $B \otimes_{\rm min} B^{\rm op}$.
\end{lemma}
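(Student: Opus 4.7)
The first part handles the max case and proceeds via an appropriate GNS-type construction. The plan is to build a single Hilbert space $H_\tau$ carrying commuting $*$-representations of $B$ and $B^{\rm op}$, together with a vector functional that recovers $\mu_\tau$. Concretely, I would form $H_\tau$ as the Hilbert space completion of $B/N_\tau$ with inner product $\langle [a],[b]\rangle_\tau = \tau(b^*a)$, where $N_\tau=\{a\in B:\tau(a^*a)=0\}$, and let $\pi_\tau$ act by left multiplication, $\pi_\tau(b)[c]=[bc]$. Using the trace identity $\tau(x^*x)=\tau(xx^*)$, one checks that $N_\tau$ is also invariant under right multiplication and that $\rho_\tau(b^{\rm op})[c]:=[cb]$ extends to a bounded operator with $\|\rho_\tau(b^{\rm op})\|\le \|b\|$; that $\rho_\tau$ is a $*$-homomorphism of $B^{\rm op}$ and that its image commutes with $\pi_\tau(B)$ is then immediate on the dense subspace.

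Once this setup is in place, the universal property of the maximal tensor product applied to the commuting pair $(\pi_\tau,\rho_\tau)$ yields a $*$-homomorphism $\Phi\colon B\otimes_{\rm max} B^{\rm op}\to B(H_\tau)$ satisfying $\Phi(b_1\otimes b_2^{\rm op})=\pi_\tau(b_1)\rho_\tau(b_2^{\rm op})$. If $B$ is unital, taking $\xi=[1_B]$ gives $\mu_\tau(b_1\otimes b_2^{\rm op})=\tau(b_1b_2)=\langle \Phi(b_1\otimes b_2^{\rm op})\xi,\xi\rangle_\tau$, so $\mu_\tau^{\rm max}=\langle\Phi(\cdot)\xi,\xi\rangle_\tau$ is a vector state and therefore positive. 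For non-unital $B$, one fixes a bounded approximate unit $(e_\lambda)$ in $B$ and writes $\mu_\tau^{\rm max}$ as the pointwise limit of the positive functionals $x\mapsto \langle\Phi(x)[e_\lambda],[e_\lambda]\rangle_\tau$, preserving positivity in the limit after bounding on elementary tensors.

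For the min case, continuity of $\mu_\tau$ in the spatial norm is given by \cref{thm:Kirchberg's Theorem for continuity of maximally entangled state} since $\tau$ is amenable, so $\mu_\tau^{\rm min}$ is well defined. Positivity then follows from the previous part by a short lifting argument. The canonical surjective $*$-homomorphism $q_{\rm min}\colon B\otimes_{\rm max} B^{\rm op}\to B\otimes_{\rm min} B^{\rm op}$ satisfies $\mu_\tau^{\rm min}\circ q_{\rm min}=\mu_\tau^{\rm max}$, because the two sides agree on the dense $*$-subalgebra $B\odot B^{\rm op}$ and are both continuous. Given $y\ge 0$ in $B\otimes_{\rm min} B^{\rm op}$, take $z=y^{1/2}$, pick any $x\in B\otimes_{\rm max} B^{\rm op}$ with $q_{\rm min}(x)=z$, and observe that $q_{\rm min}(x^*x)=z^*z=y$; hence $\mu_\tau^{\rm min}(y)=\mu_\tau^{\rm max}(x^*x)\ge 0$ by the first part of the lemma.

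The main technical point is verifying that right multiplication on $H_\tau$ extends to a bounded $*$-representation of $B^{\rm op}$, as this is the one place where the trace property is genuinely used; once this is established, everything else reduces to routine use of GNS cyclic vectors (plus an approximate unit trick in the non-unital case) and the universal properties of the maximal tensor product and the quotient map $q_{\rm min}$.
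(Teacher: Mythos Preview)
Your argument is correct, but it takes a different route from the paper's. The paper gives a short direct computation: for $x=\sum_j b_{1,j}\otimes b_{2,j}^{\rm op}$ one expands $\mu_\tau(x^*x)$, uses the trace identity to rearrange factors, and recognizes the result as $\tau\bigl((\sum_j b_{1,j}b_{2,j})^*(\sum_i b_{1,i}b_{2,i})\bigr)\ge 0$. Positivity on the completion(s) then follows immediately from continuity (\cref{lemma:continuity of maximally entangled state for maximal norm} and \cref{thm:Kirchberg's Theorem for continuity of maximally entangled state}), with no separate lifting argument needed for the min case since every positive element of $B\otimes_{\rm min}B^{\rm op}$ is $y^*y$ for some $y$ approximable from $B\odot B^{\rm op}$.

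Your GNS/standard-form approach is more structural: it builds the commuting left/right representations on $H_\tau$ and exhibits $\mu_\tau^{\rm max}$ as a vector functional. This is essentially the viewpoint the paper alludes to in the sentence just before the lemma (``in \cite{BO}, $\mu_\tau$ is defined in a slightly different way, which makes it clear that $\mu_\tau$ is positive''). The payoff of your route is a concrete Hilbert-space realization of $\mu_\tau^{\rm max}$, which can be useful elsewhere; the cost is the extra machinery (boundedness of $\rho_\tau$, the approximate-unit limit in the non-unital case, the lifting through $q_{\rm min}$). The paper's computation is shorter and entirely elementary, needing only the trace property and density.
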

\begin{proof}
    Let $x = \sum_{j=1}^n b_{1,j} \otimes b_{2,j}^{\rm op} \in B \odot B^{\rm op}$. Then we compute that
    \begin{align*}
        \mu_{\tau}(x^*x)
        & = \sum_{i,j=1}^n \mu_{\tau}(b_{1,j}^*b_{1,i} \otimes (b_{2,j}^*)^{\rm op}b_{2,i}^{\rm op})
        = \sum_{i,j=1}^n \mu_{\tau}(b_{1,j}^*b_{1,i} \otimes (b_{2,i}b_{2,j}^*)^{\rm op}) \\
        & = \sum_{i,j=1}^n \tau(b_{1,j}^*b_{1,i}b_{2,i}b_{2,j}^*)
        = \sum_{i,j=1}^n \tau(b_{2,j}^*b_{1,j}^*b_{1,i}b_{2,i}) \\
        & = \sum_{i,j=1}^n \tau((b_{1,j}b_{2,j})^*b_{1,i}b_{2,i})
        = \tau\left(\sum_{j=1}^n(b_{1,j}b_{2,j})^*\sum_{i=1}^n(b_{1,i}b_{2,i})\right) \\
        & = \tau\left(\left(\sum_{j=1}^n b_{1,j}b_{2,j}\right)^*\left(\sum_{i=1}^n b_{1,i}b_{2,i}\right)\right).
    \end{align*}
    Hence, $\mu_{\tau}$ is positive on $B \odot B^{\rm op}$. Positivity on $B \otimes_{\rm max} B^{\rm op}$ follows from \cref{lemma:continuity of maximally entangled state for maximal norm}. In the case where $B$ is unital and $\tau$ is amenable, positivity of $\mu_\tau$ on $B \otimes_{\rm min} B^{\rm op}$ follows by \cref{thm:Kirchberg's Theorem for continuity of maximally entangled state}, completing the proof.
\end{proof}

We are going to need the following result throughout this paper, cf.~\cite[Theorem 3.5.3]{BO} for a proof of this fact.

\begin{lemma}\label{lemma:Continuity of tensor product maps}
    If $F \colon A \to B$ and $G \colon C \to D$ are completely positive maps between $C^*$-algebras, we have that the map
    \begin{equation*}
        F \odot G \colon A \odot C \to B \odot D
    \end{equation*}
    extends to a completely positive map on both the minimal and maximal tensor products. Denoting (either of) the extensions by $F \otimes G$ we have that
    \begin{equation*}
        \|F \otimes G\| = \|F\|\|G\|.
    \end{equation*}
\end{lemma}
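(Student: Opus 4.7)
My plan is to construct the CP extensions via Stinespring dilations and derive the norm equality from the general principle that a CP map attains its norm on (an approximate) unit. By Stinespring's theorem, factor $F$ and $G$ through faithful embeddings $B \hookrightarrow B(H_B)$ and $D \hookrightarrow B(H_D)$ as $F(a) = V_F^*\pi_F(a)V_F$ and $G(c) = V_G^*\pi_G(c)V_G$, with $*$-representations $\pi_F\colon A \to B(K_F)$, $\pi_G\colon C \to B(K_G)$ and bounded intertwiners $V_F\colon H_B \to K_F$, $V_G\colon H_D \to K_G$. The candidate extension is then $(F \otimes G)(x) = (V_F \otimes V_G)^*(\pi_F \otimes \pi_G)(x)(V_F \otimes V_G)$, which already agrees with $F \odot G$ on elementary tensors.

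For the minimal case, the product representation $\pi_F \otimes \pi_G$ on $K_F \otimes K_G$ is continuous with respect to $\|\cdot\|_{\rm min}$ by the very definition of the spatial norm, and hence extends to a $*$-representation of $A \otimes_{\rm min} C$. Compressing by $V_F \otimes V_G$ yields a CP map into $B(H_B \otimes H_D)$; on elementary tensors this equals $F(a) \otimes G(c)$, so by continuity and density of $B \odot D$ inside $B \otimes_{\rm min} D \subseteq B(H_B \otimes H_D)$ the image lies in $B \otimes_{\rm min} D$. The maximal case is more delicate, because a Stinespring compression always lives in the spatial closure rather than the maximal one. To handle this I would factor $F \otimes G = (\id_B \otimes G) \circ (F \otimes \id_C)$ and prove separately that each factor extends as a CP map between the maximal tensor products. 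For $F \otimes \id_C$, I would combine the Stinespring dilation of $F$ with a faithful representation of $B \otimes_{\rm max} C$; the two resulting commuting $*$-representations of $A$ and $C$ extend, via the universal property of $\otimes_{\rm max}$, to a $*$-representation of $A \otimes_{\rm max} C$, and compression gives the CP extension with image in $B \otimes_{\rm max} C$. The symmetric construction handles $\id_B \otimes G$.

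For the norm identity $\|F \otimes G\| = \|F\|\|G\|$, I would apply the standard fact that for a CP map $H$ between $C^*$-algebras, $\|H\| = \sup_\lambda \|H(e_\lambda)\|$ for any approximate unit $(e_\lambda)$ of the domain (and $\|H\| = \|H(1)\|$ in the unital case). Combined with the fact that any $C^*$-tensor norm $\delta$ is a cross-norm on elementary tensors, i.e.\ $\|b \otimes d\|_\delta = \|b\|\|d\|$, this yields the equality uniformly for both $\otimes_{\rm min}$ and $\otimes_{\rm max}$: the value of $F \otimes G$ on $e_\lambda \otimes f_\mu$ is $F(e_\lambda) \otimes G(f_\mu)$, whose norm is $\|F(e_\lambda)\|\|G(f_\mu)\|$, and taking suprema over the product approximate unit gives $\|F\|\|G\|$. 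I expect the main obstacle to be the maximal-norm extension: the Stinespring strategy naively lands in the spatial quotient, and recovering the correct target $B \otimes_{\rm max} C$ (rather than $B \otimes_{\rm min} C$) requires a careful use of the universal property of $\otimes_{\rm max}$ combined with a compatible concrete realization of $B \otimes_{\rm max} C$.
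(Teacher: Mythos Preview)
The paper does not prove this lemma; it simply cites \cite[Theorem 3.5.3]{BO}. Your sketch is essentially the argument given there: Stinespring dilations handle the minimal case directly, the maximal case is reduced to maps of the form $F \otimes \id_C$ via the factorization $F \otimes G = (\id_B \otimes G)\circ(F \otimes \id_C)$, and the norm equality follows from the behavior of CP maps on approximate units together with the cross-norm property.

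One point deserves more care in your maximal-case sketch. You write that the Stinespring dilation of $F$ and a faithful representation of $B \otimes_{\rm max} C$ yield ``two resulting commuting $*$-representations of $A$ and $C$'' on the dilation space. This is the crux, but it is not automatic: if $\rho\colon B \otimes_{\rm max} C \to B(H)$ is faithful with restrictions $\rho_B, \rho_C$, and you Stinespring-dilate $\rho_B \circ F$ to $V^*\pi(\cdot)V$ on a space $K$, you still have to \emph{construct} a representation $\tilde\rho_C$ of $C$ on $K$ commuting with $\pi(A)$. The standard way is to set $\tilde\rho_C(c)\big(\pi(a)V\xi\big) := \pi(a)V\rho_C(c)\xi$ on the minimal dilation space and check well-definedness using that $\rho_C(C)$ commutes with $\rho_B(B) \supseteq \rho_B(F(A))$. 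This commutant-lifting step is exactly what Brown--Ozawa spell out, so your outline is correct in spirit but should flag that this is where the work lies.
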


We will often work with multiple tensor products and how these interact with opposite algebras. Here we establish some notation: Let $A$, $B$, and $C$ be $C^*$-algebras. Define $\Sigma \colon A \odot B \to B \odot A$ by
\begin{equation*}
    \Sigma(a \otimes b) = b \otimes a.
\end{equation*}
This is a $*$-isomorphism, hence $\Sigma$ is continuous in every $C^*$-norm on $A \odot B$. We will therefore not use any decoration to indicate the continuous extension of $\Sigma$, and we will furthermore not use any decoration to indicate the domain and codomain of $\Sigma$ either. If there are multiple tensor products present, we will use the subscript notation $\Sigma_{[ij]}$ to denote that we are interchanging the $i$-th and $j$-th tensor factor. For example, $\Sigma_{[23]} \colon A \odot B \odot C \to A \odot C \odot B$ is defined to be $\id_A \otimes \Sigma$.

The map $B \to B^{\rm op}$, $b \mapsto b^{\rm op}$ defines an anti-$*$-isomorphism, which moreover is a $*$-isomorphism if and only if $B$ is commutative. In particular the assignment $b \mapsto b^{\rm op}$ is positive, since
\begin{equation*}
    b^*b \mapsto (b^*b)^{\rm op} = b^{\rm op}(b^*)^{\rm op} = b^{\rm op}(b^{\rm op})^*.
\end{equation*}
Hence the map $\Sigma^{\rm op} \colon A \odot B^{\rm op} \to B \odot A^{\rm op}$ defined by
\begin{equation*}
    \Sigma^{\rm op}(a \otimes b^{\rm op}) = b \otimes a^{\rm op}
\end{equation*}
is a positive linear isomorphism, which furthermore is continuous as a linear map
\begin{align*}
    \Sigma^{\rm op} & \colon (A \odot B^{\rm op}, \| \cdot \|_{\rm min}) \to (B \odot A^{\rm op}, \| \cdot \|_{\rm min}), \text{ and} \\
    \Sigma^{\rm op} & \colon (A \odot B^{\rm op}, \| \cdot \|_{\rm max}) \to (B \odot A^{\rm op}, \| \cdot \|_{\rm max}).
\end{align*}
In fact, if $\delta$ is a $C^*$-norm on $B \odot A^{\rm op}$, then we get an analogous $C^*$-norm, $\delta^{\rm op}$ on $A \odot B^{\rm op}$ defined for any $x \in A \odot B^{\rm op}$ by
\begin{equation*}
    \delta^{\rm op}(x) = \delta(\Sigma^{\rm op}(x)).
\end{equation*}
This indeed defines a $C^*$-norm since $\Sigma^{\rm op}$ is an anti-$*$-isomorphism. Then, by construction, we have that $\Sigma^{\rm op}$ is continuous as a linear map
\begin{equation*}
    \Sigma^{\rm op} \colon (A \odot B^{\rm op}, \delta^{\rm op}) \to (B \odot A^{\rm op}, \delta),
\end{equation*}
for any $C^*$-norm $\delta$.

We will again not decorate $\Sigma^{\rm op}$ to indicate that we are using the continuous extension, or to indicate the domain and codomain of $\Sigma^{\rm op}$. If there are multiple tensor factors we will again use the subscript notation $\Sigma_{[ij]}^{\rm op}$ to indicate that we are interchanging the $i$-th and $j$-th tensor factor and passing to the opposite algebra in the $i$-th and $j$-th tensor factor. For example, $\Sigma_{[23]}^{\rm op} \colon A \odot B \odot C^{\rm op} \to A \odot C \odot B^{\rm op}$ is defined to be $\id_A \otimes \Sigma^{\rm op}$.

We will use the notation $\Hom(A,B)$ to denote the vector space of all linear maps from $A$ to $B$, where we simply regard $A$ and $B$ as complex vector spaces. For the continuous dual of $A$ we will use the notation $A^*$, and if $F \colon A \to B$ is a bounded linear map we will denote the dual map by $F^* \colon B^* \to A^*$. The dual map is defined by setting
\begin{equation*}
    F^*\psi = \psi \circ F,
\end{equation*}
for any $\psi \in B^*$. The state space of $A$ is denoted by $S(A)$.

For a unital $C^*$-algebra $A$, we will denote the unit by $1_A$. In the special case where $A = M_n(\bbC)$, we will for simplification use the notation $1_n$ instead of $1_{M_n(\bbC)}$. Similarly, we will use the notation $\id_n$ for the identity map on $M_n(\bbC)$ and $\Tr_n$ for the (unnormalized) trace on $M_n(\bbC)$. We furthermore use the notation $\left\{e_{i,j}^{(n)}\right\}_{i,j=1}^{n}$ to denote the standard matrix units of $M_n(\bbC)$.

\subsection{Lipschitz seminorms and spectral triples}\label{sec:lipschitz-seminorms-and-cqms}

We remind the reader about basic notions from quantum metric geometry, in particular the definition of a Lipschitz seminorm and of a compact quantum metric space. There are numerous definitions in the literature, e.g., \cite{Rieffel2004, RieffelGHdistanceforQMS, RieffelMatricialBridges, Kaad24}, all of which are related and serve different purposes. In the sequel we will find it beneficial to use the terminology from \cite{KaadKyed2025}.

\begin{definition}\label{def:Lipschitz-seminorm}
    Let $A$ be 
    a unital  $C^*$-algebra. A \emph{Lipschitz seminorm} on $A$ is a seminorm $L \colon A \to [0,\infty]$ satisfying
    \begin{enumerate}
        \item $L$ is densely defined in the sense that the domain ${\rm Dom}(L) := \{a \in A \mid L(a) < \infty \}$ is dense in $A$.
        \item $L(a) = L(a^*)$ for all $a \in A$.
        \item $\bbC \cdot 1_A \subseteq \ker(L) := \{ x \in A : L(x) = 0 \}$.
    \end{enumerate}
\end{definition}

In general, by a seminorm on $A$, we will henceforth always mean a seminorm $L \colon A \to [0, \infty]$ that is densely defined in the sense above. Given such a seminorm $L$ on a unital $C^*$-algebra $A$, we may associate to it the \emph{Monge-Kantorovi\v{c} extended metric} $\mk_L \colon S(A) \times S(A) \to [0,\infty]$ given by
\begin{align}\label{eq:def-mk-metric}
    \mk_L (\phi, \psi) := \sup \{ \vert \phi(a) - \psi(a) \vert : a\in A, L(a) \leq 1 \}
\end{align}
It is natural to wonder when $\mk_L$ is a genuine metric on $S(A)$. A result in this direction is that if $\ker(L) = \bbC 1_A$ and 
\begin{equation*}
    \sup\big\{\inf\{\|a-\lambda 1_A\|: \lambda \in \bbC\} : a\in A, L(a)\leq 1\big\} <\infty,
\end{equation*}
then $S(A)$ has finite diameter with respect to~$\mk_L$ (cf.~\cite[Proposition 1.6]{Rieffel98}). When $L$ is a Lipschitz seminorm on $A$, there has been a lot of interest in deciding when $\mk_L$ metrizes the weak$^*$-topology on $S(A)$, that is, when the topology induced from $\mk_L$ coincides with the weak$^*$-topology. When this happens, $(A, L)$ is called a \emph{compact quantum metric space}, and it holds that $\ker(L) = \bbC 1_A$ and $S(A)$ has finite diameter with respect to~$\mk_L$.

A rich source of examples of Lipschitz seminorms on unital $C^*$-algebras comes from spectral triples, of which we now remind the reader. Since we in \cref{sec:Kasparov-products} will  closely follow the exposition of \cite{Kaad24} on Kasparov products, we use the following definition of unital spectral triples, see \cite{Connes1989, ConnesNCGBook, Kaad24}.

\begin{definition}\label{def:spectral-triple}
    A unital spectral triple $(A,H,\partial)$ consists of a unital $C^*$-algebra $A$, a unital faithful representation $\pi\colon A \to B(H)$ on a separable Hilbert space $H$, and a self-adjoint (typically unbounded) operator $\partial \colon {\rm Dom}(\partial) \subseteq H \to H$ such that
    \begin{enumerate}
        \item[(a)] there exists a dense $*$-subalgebra $\calA \subseteq A$ 
        containing the unit $1_A$ 
        such that for every $a \in \calA$ the commutator
        \begin{align*}
            [\partial, \pi(a)] = \partial \pi(a) - \pi(a) \partial
        \end{align*}
        is densely defined and extends to a bounded operator on $H$;
        \item[(b)] the resolvent $(i +\partial)^{-1}$ is a compact operator.
    \end{enumerate}
    A unital spectral triple is \emph{even} when the Hilbert space $H$ is equipped with a $\bbZ /2\bbZ$-grading operator $\gamma \colon H \to H$ such that $\pi(a) \colon H\to H$ is an even operator for all $a \in \calA$ and $\partial \colon {\rm Dom}(\partial) \to H$ is odd. Otherwise we say the unital spectral triple is \emph{odd}.
\end{definition}

In the sequel we will suppress the $*$-homomorphism $\pi$ from the notation.
To a spectral triple $(A, H, \partial)$ we may naturally associate a Lipschitz seminorm $L_\partial \colon A\to [0,\infty]$ through
\begin{align}\label{eq:def-spec-triple-slip-norm}
    L_\partial (a) = \begin{cases}
        \Vert [\partial, a] \Vert & \text{if $a \in \calA$} \\
        \infty & \text{if $a \in A \setminus \calA$},
    \end{cases}
\end{align}
where we for notational convenience have identified $[\partial, a]$ with its bounded extension. It is easy to verify that $L_\partial$ is a Lipschitz seminorm on $A$: The domain may be identified with $\calA$, which is dense in $A$. The statement $\bbC \cdot 1_A \subseteq \ker(L_\partial)$ follows from the unit commuting with $\partial$, and $*$-invariance follows from $\partial$ being self-adjoint.

\begin{remark}\label{remark:extension-by-infty}
    Constructing a Lipschitz seminorm from a spectral triple highlights an important subtlety: We set the domain ${\rm Dom}(L_\partial)$ to be equal to $\calA$, even if it is possible that 
    $\Vert [\partial , a]\Vert < \infty$ for some $a \in A \setminus \calA$. Despite this, we still set $L_\partial (a) = \infty$ in this case. All this is to say that the domain of $L_\partial$ need not agree with the maximal domain for the derivation $a \mapsto [\partial, a]$.
\end{remark}

We will in the sequel need to lift
seminorms from $C^*$-algebras to their tensor products. Let $A$ and $B$ be unital $C^*$-algebras equipped with seminorms $L_A \colon A \to [0,\infty]$ and $L_B \colon B \to [0,\infty]$, respectively. Denote by ${\rm Dom}(L_A)$ and ${\rm Dom}(L_B)$ the domains of $L_A$ and $L_B$, respectively. Letting $A \otimes B$ denote either the minimal or maximal tensor product of $A$ and $B$, we define the following seminorms on $A \otimes B$
\begin{equation}\label{eq:def-tensor-seminorms}
\begin{split}
    (L_A \otimes 1_B) (z)
        &:= \sup\{ L_A((\id_A \otimes \psi)(z)) \colon \psi \in S(B) \} \\
        (1_A \otimes L_B) (z) &:= \sup\{ L_B((\phi \otimes \id_B)(z)) \colon \phi \in S(A) \}\\
        L_{A\otimes B} &:= L_A \otimes 1_B + 1_A \otimes L_B
    \end{split}
\end{equation}
with ${\rm Dom}(L_A \otimes 1_B) = {\rm Dom}(L_A) \odot B$, ${\rm Dom}(1_A \otimes L_B) = A \odot {\rm Dom}(L_B)$, and ${\rm Dom}(L_A \otimes L_B) = {\rm Dom}(L_A) \odot {\rm Dom}(L_B)$, all these domains being dense in $A \otimes B$.

Assume now that $L_A$ and $L_B$ are Lipschitz seminorms. Using the fact that $L_A$ and $L_B$ are both $*$-invariant, it is not difficult to verify that the three seminorms just defined are $*$-invariant. Moreover, we easily verify that $\bbC \cdot (1_A \otimes 1_B)$ is contained in the kernel of all three seminorms. Thus they are all Lipschitz seminorms. Note however that $\ker (L_A \otimes 1_B)$ is generally larger than $\bbC \cdot (1_A \otimes 1_B)$, as it at the very least contains $\bbC \cdot (1_A \otimes {\rm Dom}(L_B))$. The analogous statement holds for $1_A\otimes L_B$. Moreover, $L_{A \otimes B}(x) = L_{B \otimes A}(\Sigma (x))$ for all $x \in A \otimes B$.

Lastly, we will at times need to pass from a $C^*$-algebra $A$ to its opposite $C^*$-algebra $A^{\rm op}$. When $A$ comes equipped with a seminorm $L_A$ we induce a seminorm $L_{A^{\rm op}}$ on $A^{\rm op}$ defined by
\begin{equation}\label{eq:def-opposite-seminorm}
    L_{A^{\rm op}} (a^{\rm op}) := L_A (a)
\end{equation}
from which we also note that $a^{\rm op} \in {\rm Dom}(L_{A^{\rm op}})\subseteq A^{\rm op}$ if and only if $a \in {\rm Dom}(L_A) \subseteq A$.
Clearly, $L_{A^{\rm op}}$ is a Lipschitz seminorm if and only if $L_A$ is a Lipschitz seminorm.

\subsection{Reduced twisted group C*-algebras and length functions}\label{twisted}

Suppose  $G$ is a discrete group and $\sigma \in Z^2(G, \bbT)$ is a $2$-cocycle. The $*$-algebra $C_c(G, \sigma)$ consists of the complex functions on $G$ with finite support, equipped with convolution and involution defined by
\begin{align*}
    (f_1*_\sigma f_2) (x) = \sum_{y \in G} f_1(y) f_2(y^{-1}x) \sigma(y,y^{-1}x) \quad \text{and} \quad f^*(x) = \overline{\sigma(x,x^{-1})} \overline{f(x^{-1})}
\end{align*}
for $f, f_1, f_2 \in C_c(G, \sigma)$ and $x \in G$. Associated with the pair $(G,\sigma)$ is also the \emph{$\sigma$-twisted left regular representation} $\lambda^\sigma$. This is the $*$-representation $ \lambda^\sigma \colon C_c(G,\sigma) \to B(\ell^2(G))$, where
\begin{align*}
    \lambda^\sigma(f) \xi (x) = \sum_{y \in G} f(y) \xi(y^{-1}x) \sigma(y,y^{-1}x)
\end{align*}
for $f \in C_c (G, \sigma)$, $\xi \in \ell^2(G)$ and $x\in G$. Thus $\lambda^\sigma$ is the extension of the convolution on $C_c(G,\sigma)$ to a bounded operator on $\ell^2(G)$. Taking the operator norm closure of $C_c(G,\sigma)$ under the (faithful) $*$-representation $\lambda^\sigma$ we obtain the $\sigma$-twisted reduced group $C^*$-algebra $C^*_r(G, \sigma)$. Equivalently,  $\Cred(G,\sigma)$ is the $C^*$-subalgebra of $B(\ell^2(G))$ generated by $\{\lambda^\sigma_g :g \in G\}$, where $\lambda^\sigma_g := \lambda^\sigma(\delta_g)$ for each $g\in G$. Here $\delta_g \in \ell^2(G)$ is the function defined by
\begin{equation*}
    \delta_g(x) =
    \begin{cases}
        1 & x = g, \\
        0 & x \neq g.
    \end{cases}
\end{equation*}
Note moreover that there is a faithful canonical tracial state $\tau_\sigma \in S(\Cred(G,\sigma))$ determined by
\begin{equation*}
    \tau_\sigma(\lambda^\sigma(f)) = f(1_G) \quad \text{ for all } f\in C_c(G,\sigma).
\end{equation*}

We may also consider the \emph{$\sigma$-twisted right regular representation} $\rho^\sigma$. This is the  $*$-antihomomorphism $\rho^\sigma \colon C_c(G,\sigma) \to B(\ell^2(G))$, where
\begin{align*}
    \rho^\sigma(f) \xi (x) = \sum_{y \in G} \xi(y) f(y^{-1}x) \sigma(y,y^{-1}x)
\end{align*}
for $f \in C_c (G, \sigma)$, $\xi \in \ell^2(G)$ and $x\in G$. This defines a faithful $*$-representation of the opposite algebra $\Cred(G,\sigma)^{\rm op}$.

Let $P(G)$ denote the positive definite functions on $G$, and set
\begin{equation}
    P_1(G) := \{ \phi \in P(G) : \phi(1_G) = 1 \}.
\end{equation}
It is well-known that if $\phi \in P(G)$ (resp. $\phi \in P_1(G)$), then the corresponding multiplier $ M_\phi \colon \Cred (G, \sigma) \to \Cred(G, \sigma)$, determined by
\begin{equation*}
    M_\phi(\lambda^\sigma_g) = \phi(g)\lambda^\sigma_g \quad \text{ for all } g\in G,
\end{equation*}
is a completely positive map such that $\|M_\phi\| = \phi(1_G)$ (resp.~a unital completely positive map), see for instance \cite[Theorem 2.5.11]{BO} (when $\sigma=1$) or \cite[Lemma 2.6]{BedosConti2016}.

We will want to define spectral triples for $C^*_r(G,\sigma)$, for which we record the following definition.
Note that we adopt the notion of length function used in \cite{Connes1989}, which in principle allows for slightly more flexibility than the one requiring the length function to take the value $0$ only in the unit. 
\begin{definition}[\cite{Connes1989}]
    If $G$ is a group, a \emph{length function} $l \colon G \to [0, \infty)$ is a function satisfying the following three conditions:
    \begin{enumerate}
        \item $l(s) = 0$ if $s = 1_G$,
        \item $l(s^{-1}) = l(s)$ for all $s \in G$, and
        \item $l(st) \leq l(s) + l(t)$ for all $s,t \in G$.
    \end{enumerate}
\end{definition}

Suppose moreover that $G$ is countable and the length function $l$ is \emph{proper}, that is, for every $r \geq 0$, the set $l^{-1}([0,r])$ is a finite set. Such a length function gives rise to an essentially self-adjoint unbounded operator $\partial_l$ on $\ell^2(G)$ given by the linear extension of
\begin{equation*}
\begin{split}
    \partial_l \colon C_c(G) & \to \ell^2(G) \\
    \delta_s & \mapsto l(s) \delta_s,\text{ for }s\in G.
\end{split}
\end{equation*}
As $\partial_l$ is essentially self-adjoint, we may take its self-adjoint closure, which we will also denote by $\partial_l$ to ease notation. 
Note that $\Cred(G,\sigma)$ is faithfully represented on $\ell^2(G)$ through the left regular representation $\lambda^\sigma$ of $C_c(G,\sigma)$. 
A straightforward calculation such as in \cite[p.~614]{Rieffel02} shows that for any $f$ in $C_c(G, \sigma)$,  the commutator $[\partial_l, \lambda^\sigma(f)]$ belongs to the reduced $\sigma$-twisted $C^*$-crossed product $\ell^\infty(G) \rtimes_{{\rm red}, \sigma} G$, being here identified with the $C^*$-subalgebra of $B(\ell^2(G))$ generated by $\ell^\infty(G)$ (acting as multiplication operators on $\ell^2(G)$) and $\{\lambda^\sigma_g :g \in G\}$. 
In particular, $[\partial_l, \lambda^\sigma(f)]$ extends to a bounded operator on $B(\ell^2(G))$  for any $f \in C_c(G,\sigma)$, and we thus obtain a spectral triple $(C^*_r(G, \sigma),\ell^2(G),\partial_l)$. As a result, we may define a seminorm $L_{\partial_l}$ through \eqref{eq:def-spec-triple-slip-norm}. 
Analogously, we obtain a spectral triple $(C^*_r(G, \sigma)^{\rm op},\ell^2(G),\partial_l)$, where $\Cred(G,\sigma)^{\rm op}$ is faithfully represented on $\ell^2(G)$ with the right regular representation of $C_c(G,\sigma)$.

We will later need the following property of multipliers associated with $P(G)$ with respect to~the Lipschitz seminorm  $L_{\partial_l}$ on $\Cred(G,\sigma)$.
\begin{lemma}\label{lemma:L-cont-multipliers}
    Let $G$ be a countable discrete group, $l \colon G \to [0,\infty)$ be a proper length function, and $\sigma \in Z^2(G,\bbT)$. Suppose $\phi \in P(G)$. Then $M_\phi$ extends to a completely positive multiplier $T^\phi$ of the twisted crossed product $ \ell^\infty(G) \rtimes_{\mathrm{red}, \sigma} G$, and we have
    \begin{align*}
        L_{\partial_l}(M_\phi (\lambda^\sigma(f))) \leq \Vert M_\phi \Vert \, L_{\partial_l}(\lambda^\sigma(f))
    \end{align*}
    for all $f \in C_c(G, \sigma)$. In particular, if $\phi \in P_1(G)$, $M_\phi$ is an $L_{\partial_l}$-seminorm contraction.
\end{lemma}
\begin{proof}
    The fact that $M_\phi$ extends to a completely positive map $T^\phi$ on $\ell^\infty(G) \rtimes_{\mathrm{red}, \sigma} G$ satisfying
    \begin{align}\label{eq:action-T-phi}
    T^\phi(\psi\lambda^\sigma_g) = \phi(g)\psi\lambda^\sigma_g \quad \text{ for all } \psi \in \ell^\infty(G)  \text{ and } g\in G,
    \end{align}
    and $\|T^\phi\| = \|M_\phi\| = \phi(1_G)$,
    follows by \cite[Corollary 4.3]{BedosConti2015}. Setting $\psi_g (x):= l(g^{-1}x) - l(x)$ for $g, x \in G$ and noting $\psi_g \in \ell^\infty(G)$ by the triangle inequality for $l$ (see \cite[Lemma 5]{Connes1989}), we get that
    \begin{align*}
        [\partial_l, M_\phi(\lambda^\sigma_g)] = \phi(g) \psi_g  \lambda^{\sigma}_g = T^{\phi} (\psi_g \lambda^\sigma_g)
    \end{align*}
    for all $g \in G$. Since $\lambda^\sigma(f) = \sum_{g\in G} f(g) \lambda^\sigma_g$ for $f \in C_c(G, \sigma)$, we deduce that for every such $f$ we have
    \begin{align*}
        [\partial_l, M_\phi(\lambda^\sigma(f))] = T^\phi([\partial_l, \lambda^\sigma(f)]) .
    \end{align*}
    Hence, for each $f \in C_c(G, \sigma)$ we get
    \begin{align*}
        L_{\partial_l}(M_\phi (\lambda^\sigma(f)))
        & = \Vert [\partial_l, M_\phi(\lambda^\sigma(f))] \Vert \\
        & = \Vert T^\phi ([\partial_l, \lambda^\sigma(f)]) \Vert \\
        & \leq \Vert T^\phi \Vert L_{\partial_l}(\lambda^\sigma(f)) \\
        & = \Vert M_\phi \Vert \, L_{\partial_l}(\lambda^\sigma(f)).
    \end{align*}
    At last, if $\phi \in P_1(G)$, then $\|M_\phi\| = 1$, completing the proof.
\end{proof}

\section{Discussions on inducing metrics on unital completely positive maps}\label{sec:discussion-inducing-metrics}

Throughout this section, $A$ and $B$ will be unital $C^*$-algebras, and we will be considering extended metrics on the set $\UCP(A,B)$ of unital completely positive maps from $A$ to $B$. 

As mentioned in the introduction, the first property an extended metric on $\UCP(A,B)$ should satisfy is \emph{stability}. In both \cite{Gilchrist} and \cite{BorovykVellambi25}, a metric $d$ on $\UCP(A,B)$ is said to be stable if
\begin{equation}\label{eq:Gilchrist-stability}
    d(\id_n \otimes F, \id_n \otimes G) = d(F,G)
\end{equation}
holds for every $n \in \bbN$ and $F,G \in \UCP(A,B)$. Note that \eqref{eq:Gilchrist-stability} does not make sense on the nose, because as it currently reads $d$ is a metric not only on the unital completely positive maps between $A$ and $B$, but also simultaneously on their amplifications. In \cite{Gilchrist} and \cite{BorovykVellambi25} it is  expected that an analogue of $d$ exists on $\UCP(M_n(\bbC) \otimes A, M_n(\bbC) \otimes B)$ for every $n \in \bbN$. For our purposes this need not be the case, and we therefore define stability as a property of a sequence of extended metrics.

\begin{definition}[Stability]\label{def:stability}
    Let $A$ and $B$ be unital $C^*$-algebras, and $\{d_n\}_{n \in \bbN}$ be a sequence of extended metrics where for each $n\geq 1$, $d_n$ is an extended metric on
    \begin{equation*}
        \UCP(M_n(\bbC) \otimes A, M_n(\bbC) \otimes B).
    \end{equation*}
    We say that the sequence $\{d_n\}_{n \in \bbN}$ is \emph{stable} if
    \begin{equation}\label{eqn:stability}
        d_n(\id_n \otimes F, \id_n \otimes G) = d_1(F,G)
    \end{equation}
    for every $n \in \bbN$ and $F,G \in \UCP(A,B)$.
\end{definition}

We next formalize the property of \emph{chaining} from \cite{Gilchrist} mathematically as follows.
\begin{definition}[Chaining]\label{def:chaining}
    Let $A$, $B$ and $C$ be unital $C^*$-algebras. We say that extended metrics $d_1$, $d_2$ and $d_3$, defined on $\UCP(A,B)$, $\UCP(B,C)$ and $\UCP(A,C)$, respectively, satisfy \emph{chaining} if given any $F_1, F_2 \in \UCP(A,B)$, and any $G_1, G_2 \in \UCP(B,C)$, the following inequality is satisfied:
    \begin{equation}\label{eqn:chaining}
        d_3(G_1 \circ F_1, G_2 \circ F_2) \leq d_2(G_1,G_2) + d_1(F_1,F_2).
    \end{equation}
    In the special case where $A = B = C$ and $d_1 = d_2 = d_3 = d$, we say that $d$ satisfies chaining.
\end{definition}

Note that in a lot of circumstances it might be too strict to require that \eqref{eqn:chaining} holds for \emph{all} unital completely positive maps. Indeed, even in the matrix algebra case, some restrictions are imposed on at least one of the maps involved in the analogue of \eqref{eqn:chaining}, see for instance \cite[p.~5]{Gilchrist} and \cite[p.~4]{Bussandri23}.

In \cite{Gilchrist} and \cite{BorovykVellambi25}, methods for inducing metrics on quantum channels between matrix algebras $A$ and $B$ from metrics on $S(A)$ (or on $S(A\otimes B)$) are discussed, using the canonical identification of these state spaces with the corresponding spaces of density matrices. However, instead of starting with a metric on any of these state spaces, we  will start with noncommutative geometric data in the form of a seminorm.

First, if $L$ is a seminorm on $A$, we can use the associated Monge-Kantorovi\v{c} extended metric $\mk_L$ on $S(A)$, cf.~\eqref{eq:def-mk-metric}, to induce an extended metric $d_L$ on $\UCP(A,B)$ as follows.

\begin{definition}\label{def:UCP metric D}
    Let $A$ and $B$ be unital $C^*$-algebras, and $L \colon A \to [0,\infty]$ be any seminorm on $A$. Define the extended metric $d_L \colon \UCP(A,B) \times \UCP(A,B) \to [0,\infty]$ by setting
    \begin{equation}\label{eqn:UCP metric D def}
        d_L(F,G) := \sup\limits_{\psi \in S(B)}\mk_L(F^*\psi,G^*\psi)
        = \sup\limits_{\psi \in S(B)}\mk_L(\psi\circ F, \psi\circ G)
    \end{equation}
        for all $F, G \in \UCP(A,B)$. An extended metric induced in this manner will be called a \emph{pullback-induced extended metric from a seminorm}.
\end{definition}

We will study properties of such extended metrics in \cref{UCP}. Note that if we have a sequence $\{L_n\}_{n \in \bbN}$ of seminorms $L_n \colon M_n(\bbC) \otimes A \to [0, \infty]$, it is generally not true that $\{d_{L_n}\}_{n \in \bbN}$ will be a stable sequence of extended metrics. Even if $L_n = \| \cdot \|$, the $C^*$-norm on $M_n(\bbC) \otimes A$, the sequence $\{d_{L_n}\}_{n \in \bbN}$ can not be expected to be stable (see \cite{Wat05}). A way to obtain stability is to use the completely bounded norm $\|\cdot\|_{\rm cb}$ \cite{Paulsen}, that is, to set
\begin{equation*}\label{eqn:stabilization}
    d_n^{\rm stab}(F,G) := \|F-G\|_{\rm cb} =\sup\limits_{m \in \bbN} \|\id_{m} \otimes (F - G)\|,
\end{equation*}
for all $F, G\in \UCP(M_n(\bbC) \otimes A, M_n(\bbC) \otimes B)$. Then $\{d_n^{\rm stab}\}_{n \in \bbN}$ is stable. This is analogous to how the diamond norm on quantum channels is defined in the finite-dimensional case. Even though this sequence of metrics satisfies stability, the completely bounded metric has the drawback that explicit calculations depend on computing distances over all the matrix amplifications (except when $B$ is a matrix algebra, cf.~\cite{Smith1983}). The same problem will occur with the stabilized version of $d_L$ introduced in \cref{UCP}, which exists under natural assumptions. In fact, the computational complexity of $d_L$ itself is high as its definition  requires taking two suprema.

As discussed in \cite{Gilchrist} and \cite{BorovykVellambi25}, there is also a way of inducing a metric on the unital completely positive maps between matrix algebras by employing the Choi-Jamio\l{}kowski isomorphism. By naively assuming that an embedding $\omega \colon \UCP(A,B) \to S(C)$ exists for some unital $C^*$-algebras $A, B$ and $C$, we could, given a seminorm $L$ on $C$, induce an extended metric on $\UCP(A,B)$ through 
\begin{equation}\label{eqn:Delta metric imprecise}
    \Delta_{L}(F,G) := \mk_L(\omega(F),\omega(G)).
\end{equation}
In \cref{sec:tracial-Cstar-algebras-and-cp-maps}, assuming that $B$ has a faithful trace, and choosing  $C$ to be $A \otimes_{\rm max} B^{\rm op}$, or $A \otimes_{\rm min} B^{\rm op}$ under suitable assumptions, we develop the necessary technical tools to define an embedding $\omega$ to make the definition in \eqref{eqn:Delta metric imprecise} precise, see \cref{def:TC metric Delta}. In fact, this will allow us to characterize precisely the maps from $A$ to $B$ that will correspond to states on $A \otimes_{\rm max} B^{\rm op}$ under this embedding, cf. \cref{thm:maps with states as associated functionals}.

\section{Pullback-induced metrics}\label{UCP}

Let $A$ and $B$ be unital $C^*$-algebras, and $L \colon A \to [0,\infty]$ be a seminorm on $A$. We recall from \cref{def:UCP metric D} that $d_L \colon \UCP(A,B) \times \UCP(A,B) \to [0,\infty]$ is the extended metric given by
\begin{equation*}
    d_L(F_1, F_2) := \sup\limits_{\psi \in S(B)}\mk_L(F_1^*\psi,F_2^*\psi) = \sup\limits_{\psi \in S(B)}\mk_L(\psi\circ F_1, \psi\circ F_2)
\end{equation*}
for all $F_1, F_2 \in \UCP(A,B)$. We will in this section establish sufficient conditions for a stabilized version of this metric to satisfy stability, see \cref{Ln2}, and chaining, see \cref{cor:chaining-for-pullback-metric}.

A general problem with $d_L$ is that it is not clear under which conditions it is possible to define for each $n\in \bbN$ a seminorm $L_n$ on ${\rm UCP}(M_n(\bbC)\otimes A, M_n(\bbC)\otimes B)$ such that $L_1= L$  and $\{d_{L_n}\}_{n\in \bbN}$ is stable. One way out of this problem is to use a stabilizing procedure. Before discussing this process, it is worth pointing out that $d_L$ is always stable in the following sense (employing product states):

\begin{proposition}\label{Ln1}
    Let $L:A\to [0, \infty]$  be a seminorm. For each $n\in \bbN$, let $1_n\otimes L$ be the seminorm on $M_n(\bbC)\otimes A$  with ${\rm Dom}(1_n\otimes L) = M_n(\bbC)\odot {\rm Dom}(L)$ given as in $\eqref{eq:def-tensor-seminorms}$, that is,
    \begin{equation*}
        (1_n\otimes L)(x) = \sup\{ L((\varphi \otimes \id_A)(x)) \colon \varphi \in S(M_n(\bbC)) \}.
    \end{equation*}
    Define $\delta_{1_n\otimes L}: {\rm UCP}(M_n(\bbC)\otimes A, M_n(\bbC)\otimes B) \times \UCP(M_n(\bbC)\otimes A, M_n(\bbC)\otimes B) \to [0, \infty]$ by
    \begin{equation*}
        \delta_{1_n\otimes L}(G_1, G_2) := \sup\big\{\mk_{1_n\otimes L}\big((\varphi \otimes \psi) \circ G_1,(\varphi\otimes \psi)\circ G_2\big) : \varphi \in S(M_n(\bbC)), \psi \in S(B)\big\}.
    \end{equation*}
    Then each $\delta_{1_n\otimes L}$ is an extended metric on ${\rm UCP}(M_n(\bbC)\otimes A, M_n(\bbC)\otimes B)$ satisfying that $\delta_{1_n\otimes L} \leq d_{1_n\otimes L}$ and $\delta_{1_1\otimes L} = d_L$. Moreover, $\{\delta_{1_n\otimes L}\}_{n\in \bbN}$ is stable.
\end{proposition}
\begin{proof}
    Let $n\in \bbN$. It is straightforward to verify that $\delta_{1_n\otimes L}$ is an extended pseudometric on ${\rm UCP}(M_n(\bbC)\otimes A, M_n(\bbC)\otimes B)$ such that $\delta_{1_n\otimes L} \leq d_{1_n\otimes L}$. 
    To show the separation property for $\delta_{1_n\otimes L}$, assume $G_1 \neq G_2$ in ${\rm UCP}(M_n(\bbC)\otimes A, M_n(\bbC)\otimes B)$. 
    By density of ${\rm Dom}(1_n\otimes L)$ in $M_n(\bbC)\otimes A$, we can then find $x \in M_n(\bbC)\otimes  A$ such that $(1_n\otimes L)(x) < \infty$, and $G_1(x) \neq G_2(x)$. 
    As the set of product states separates the points of the minimal tensor product of two unital $C^*$-algebras, 
    there exist $\varphi \in S(M_n(\bbC))$ and $\psi \in S(B)$ such that \[(\varphi\otimes\psi)(G_1(x))\neq (\varphi\otimes\psi)(G_2(x)).\]  
    Replacing $x$ with $\frac{1}{(1_n\otimes L)(x)} x$ when $(1_n\otimes L)(x) \neq 0$, we get that  $(1_n\otimes L)(x) \leq 1$ and $|((\varphi\otimes\psi)\circ (G_1-G_2))(x)| > 0$, hence that $\mk_{1_n\otimes L}((\varphi\otimes\psi)\circ G_1, (\varphi\otimes\psi)\circ G_2) > 0$.
    This implies that $\delta_{1_n\otimes L}(G_1, G_2) > 0$. Thus, $\delta_{1_n\otimes L}$ is an extended metric.

    Next, it is not difficult to show that $\mk_{1_n\otimes L}\big(\varphi\otimes \psi_1, \varphi\otimes \psi_2\big) = \mk_L(\psi_1, \psi_2)$ for every $\varphi \in S(M_n(\bbC))$ and $\psi_1, \psi_2 \in S(A)$. (We will actually prove a more general statement later, cf.~\cref{lemma:sufficient condition for stability}). Hence we get
    \begin{align*}
        \delta_{1_n\otimes L}({\rm id}_n \otimes F_1, {\rm id}_n &\otimes F_2) \\
        & = \sup\{\mk_{1_n\otimes L}\big(\varphi\otimes (\psi \circ F_1), \varphi\otimes (\psi\circ F_2)\big) : \varphi \in S(M_n(\bbC)), \psi \in S(B)\}\\
        & = \sup\{\mk_{L}\big(\psi \circ F_1, \psi\circ F_2\big) :  \psi \in S(B)\}\\
        & = d_L(F_1, F_2)
    \end{align*}
    for all $F_1, F_2\in{\rm UCP}(A, B)$.
\end{proof}

The process of producing a stabilized version of $d_L$ will rely on the following general result.

\begin{proposition}\label{StabD}
    Assume that $d_n$ is an extended metric on ${\rm UCP}(M_n(\bbC)\otimes A, M_n(\bbC)\otimes B)$ for each $n \in \bbN$, and that
    \begin{equation}\label{stab-3}
        d_{n}({\rm id}_{n}\otimes F_1, {\rm id}_{n}\otimes F_2)
        \leq  d_{n+1}({\rm id}_{n+1}\otimes F_1, {\rm id}_{n+1}\otimes F_2)
    \end{equation}
    for all $n\in \bbN$ and $F_1, F_2\in{\rm UCP}(A, B)$. For each $n\in \bbN$, set
    \begin{equation}\label{d_n-stab}
        d^{\, \rm stab}_{n}(G_1, G_2) := \sup_{m\in \bbN} \big\{d_{mn}({\rm id}_{m} \otimes G_1, {\rm id}_m\otimes G_2)\big\}
    \end{equation}
    for all $G_1, G_2 \in {\rm UCP}(M_n(\bbC)\otimes A, M_n(\bbC)\otimes B)$. Then we have that
    \begin{equation*}
        d_1^{\, \rm stab}(F_1, F_2) := \lim_{m\to \infty} d_{m}({\rm id}_{m} \otimes F_1, {\rm id}_m\otimes F_2)
    \end{equation*}
    for all $F_1, F_2\in{\rm UCP}(A,B)$, and $\{d^{\, \rm stab}_{n}\}_{n\in \bbN}$ is a stable sequence of extended metrics.
\end{proposition}
\begin{proof}
    Note that in \eqref{d_n-stab} we implicitly identify $M_m(\bbC)\otimes M_n(\bbC)$ with $M_{mn}(\bbC)$, so that ${\rm id}_{m} \otimes G$ belongs to $\UCP(M_{mn}(\bbC)\otimes A, M_{mn}(\bbC)\otimes B)$ for every $G \in {\rm UCP}(M_n(\bbC)\otimes A, M_n(\bbC)\otimes B)$.

    The fact that $d^{\rm stab}_{n}$ is an extended metric on ${\rm UCP}(M_n(\bbC)\otimes A, M_n(\bbC)\otimes B)$ for every $n\in \bbN$ is easily verified. Let $n\in \bbN$ and $F_1, F_2\in{\rm UCP}(A, B)$. Then the assumption \eqref{stab-3} that  the sequence $\{d_{n}({\rm id}_{n}\otimes F_1, {\rm id}_{n}\otimes F_2)\}_{n\in \bbN}$ is monotonically non-decreasing clearly implies that $d_1^{\, \rm stab}(F_1, F_2) = \lim_{k\to \infty}d_{k}({\rm id}_{k}\otimes F_1, {\rm id}_{k}\otimes F_2)$. Moreover, for $m, n \in \bbN$, \eqref{stab-3} also implies that
    \begin{equation*}
        d_{mn}({\rm id}_{mn}\otimes F_1, {\rm id}_{mn}\otimes F_2) \leq d_{(m+1)n}({\rm id}_{(m+1)n}\otimes F_1, {\rm id}_{(m+1)n}\otimes F_2)
    \end{equation*}
    for every $m\in \bbN$, i.e., the sequence $\{d_{mn}({\rm id}_{mn}\otimes F_1, {\rm id}_{mn}\otimes F_2)\}_{m\in \bbN}$ is monotonically non-decreasing. Thus we get
    \begin{align*}
        d^{\, \rm stab}_n({\rm id}_{n}\otimes F_1, {\rm id}_{n}\otimes F_2)
        & = \sup_{m\in \bbN} \big\{ d_{mn}({\rm id}_{m} \otimes{\rm id}_{n} \otimes F_1, {\rm id}_m \otimes {\rm id}_{n} \otimes F_2)\big\}\\
        & = \lim_{m\to \infty} d_{mn}({\rm id}_{mn} \otimes F_1, {\rm id}_{mn} \otimes F_2)\\
        & = \lim_{k\to \infty} d_{k}({\rm id}_{k} \otimes F_1, {\rm id}_{k} \otimes F_2)\\
        & = d_1^{\, \rm stab}(F_1, F_2). \qedhere
    \end{align*}
\end{proof}

For $n\in \bbN$ let $\iota_n$ denote the canonical embedding of $M_n(\bbC)$ into $M_{n+1}(\bbC)$, determined by
\begin{equation*}
    \iota_n\big(e_{i,j}^{(n)} \big) = e_{i,j}^{(n+1)} \quad \text{for all } 1\leq i, j\leq n.
\end{equation*}
The following definition will be helpful.

\begin{definition}
    Let $L:A\to [0, \infty]$  be a seminorm.  We will say that  $\{L_n\}_{n\in \bbN}$ is an \emph{upward directed sequence of seminorms adapted to $L$} when each $L_n$ is a seminorm on $M_n(\bbC)\otimes A$ such that $L=L_1$ and $L_n = L_{n+1}\circ (\iota_n\otimes \id_A)$ for every $n\in \bbN$.
\end{definition}

\begin{example}\label{upward-ex}
    Let $L \colon A \to [0, \infty]$  be a seminorm. Then there exist several possible choices of upward directed sequences of seminorms adapted to $L$. For instance, as is easily verified, we may let $\{L_n^1\}_{n\in \bbN}$ (resp.~$\{L_n^\infty\}_{n\in \bbN}$) be the sequence given by
    \begin{equation*}
        L_n^1(x) = \sum_{i,j=1}^n L(a_{i,j}) \quad \text{(resp. } L_n^\infty(x) = \sup\{ L(a_{i,j}) : 1\leq i,j \leq n\}\text{)}
    \end{equation*}
    for each $n\in \bbN$ and $x=\sum_{i, j=1}^n  e_{i,j}^{(n)} \otimes a_{i, j}$ with $a_{i, j}\in A$ for all $i, j=1, \ldots, n$. A more exotic choice is the sequence $\{L_n^S\}_{n\in \bbN}$ given by
    \begin{equation*}
        L_n^S(x) = \sup\Big\{\sum_{i,j=1}^n \big| \varphi\big(e_{i,j}^{(n)}\big)\big| \, L(a_{i,j}) : \varphi\in S(M_n(\bbC))\Big\}.
    \end{equation*}
    We leave to the reader to check that $\{L_n^S\}_{n\in \bbN}$ is an upward directed sequence of seminorms adapted to $L$.
\end{example}

The following establishes the main stability result for pullback-induced extended metrics from seminorms. 

\begin{theorem}\label{Ln2}
    Let $L \colon A \to [0, \infty]$ be a seminorm and let $\{L_n\}_{n\in \bbN}$ be any upward directed sequence of seminorms adapted to $L$. For each $n\in \bbN$, let $d_n:= d_{L_n}$ be the extended metric on $\UCP(M_n(\bbC)\otimes A, M_n(\bbC)\otimes B)$ associated with $L_n$. In particular, $d_1=d_L$.

    Then \eqref{stab-3} holds for every $n\in \bbN$ and $F_1, F_2 \in \UCP(A, B)$. Thus, $\{ d_n^{\, \rm stab}\}_{n\in \bbN}$ is stable and we have
    \begin{equation*}
        d_L^{\, \rm stab}(F_1, F_2) = \lim_{m\to \infty} d_{m}(\id_m\otimes F_1, \id_m\otimes F_2)
    \end{equation*}
    for all $F_1, F_2 \in \UCP(A, B)$.
\end{theorem}
\begin{proof}
    To prove that \eqref{stab-3} holds for $n\in \bbN$, let  $F_1, F_2 \in \UCP(A, B)$ and set $F:=F_1-F_2$. For each $\varphi \in S(M_n(\bbC)\otimes B)$, we can use the Hahn-Banach extension theorem for states \cite[Theorem 3.3.8]{Murphy1990} to pick $\widetilde\varphi  \in S(M_{n+1}(\bbC)\otimes B)$ such that $\widetilde\varphi \circ (\iota_n\otimes \id_B) = \varphi$. Using that $\{L_n\}_{n\in \bbN}$ is upward directed, we get
    \begin{align*}
        \mk_{L_n}(&\varphi\circ (\id_n\otimes F_1), \varphi\circ (\id_n\otimes F_2)) \\
        & = \sup\big\{ \big| (\varphi\circ (\id_n\otimes F))(x)\big| : x \in M_n(\bbC)\otimes A, L_n(x)\leq 1\big\}\\
        & = \sup\big\{ \big| (\varphi\circ (\id_n\otimes F))(x)\big| : x \in M_n(\bbC)\otimes A, (L_{n+1}\circ (\iota_n\otimes \id_A))(x)\leq 1\big\}\\
        & =  \sup\big\{ \big| ((\widetilde\varphi\circ (\iota_{n}\otimes \id_B)) ((\id_n\otimes F)(x))\big| : x \in M_n(\bbC)\otimes A, L_{n+1}((\iota_n\otimes \id_A)(x))\leq 1\big\}\\
        & =  \sup\big\{ \big| ((\widetilde\varphi\circ (\id_{n+1}\otimes F)) ((\iota_n\otimes \id_A)(x))\big| : x \in M_n(\bbC)\otimes A, L_{n+1}((\iota_n\otimes \id_A)(x))\leq 1\big\}\\
        & \leq \, \sup\big\{ \big| ((\widetilde\varphi\circ (\id_{n+1}\otimes F)) (y)\big| : y \in M_{n+1}(\bbC)\otimes A, L_{n+1}(y)\leq 1\big\}\\
        & = \mk_{L_{n+1}}(\widetilde\varphi\circ (\id_{n+1}\otimes F_1), \widetilde\varphi\circ (\id_{n+1}\otimes F_2))
    \end{align*}
    for every $\varphi\in S(M_n(\bbC)\otimes B)$. Hence
    \begin{align*}
        d_n(\id_n\otimes F_1, \id_n\otimes F_2) &= \sup\{ \mk_{L_n}(\varphi\circ (\id_n\otimes F_1), \varphi\circ (\id_n\otimes F_2)) : \varphi \in S(M_n(\bbC)\otimes B)\} \\
        & \leq\,\sup\{ \mk_{L_{n+1}}(\widetilde\varphi\circ (\id_{n+1}\otimes F_1), \widetilde\varphi\circ (\id_{n+1}\otimes F_2)) : \varphi \in S(M_n(\bbC)\otimes B)\}\\
        & \leq\,\sup\{ \mk_{L_{n+1}}(\psi \circ (\id_{n+1}\otimes F_1), \psi \circ (\id_{n+1}\otimes F_2)) : \psi \in S(M_{n+1}(\bbC)\otimes B)\}\\
        & = d_{n+1}(\id_{n+1}\otimes F_1, \id_{n+1}\otimes F_2),
    \end{align*}
    as we wanted to show. The conclusion follows then from \cref{StabD}.
\end{proof}

We proceed to show that chaining for the same metrics is satisfied under reasonable assumptions.

\begin{proposition}\label{chaining-ucp}
    Let $A$, $B$ and $C$ be unital $C^*$-algebras, $L$ be a seminorm on $A$ and $K$ be a seminorm on $B$. We denote by $d_L$ and $d'_L$ the extended metrics on $\UCP(A, B)$ and $\UCP(A, C)$ associated with $L$, respectively, and by $d_K$ the one on $\UCP(B, C)$ associated with $K$.

    Let $F, F_1, F_2 \in \UCP(A,B)$, and $G, G_1, G_2 \in \UCP(B,C)$. Then we have
   \begin{equation}\label{chaining-ucp1}
        d'_{L}(G \circ F_1, G \circ F_2) \leq d_L(F_1,F_2).
   \end{equation}
    Further, if $K\circ F \leq L$, then
   \begin{equation}\label{chaining-ucp2}
        d'_{L}(G_1 \circ F, G_2 \circ F) \leq d_K(G_1,G_2).
   \end{equation}
   Finally, if $K\circ F_1 \leq L$ or $K\circ F_2 \leq L$, then
    \begin{equation}\label{chaining-ucp3}
        d'_{L}(G_1 \circ F_1, G_2 \circ F_2) \leq d_L(F_1,F_2) + d_K(G_1,G_2)
    \end{equation}
\end{proposition}
\begin{proof}
    First, we have
    \begin{align*}
        d'_{L}(G \circ F_1, G \circ F_2)& = \sup\{ \mk_L(\varphi \circ G \circ F_1, \varphi \circ G \circ F_2) : \varphi \in S(C)\}\\
        & \leq \,\sup\{ \mk_L(\phi \circ F_1, \phi \circ F_2) : \phi \in S(B)\}\\
        & = d_L(F_1,F_2),
    \end{align*}
    that is, \eqref{chaining-ucp1} holds.

    Next, assume that $K\circ F \leq L$. This implies that if $a\in A$ and $L(a)\leq 1$, then $K(F(a)) \leq 1$. Thus, for any $\varphi \in S(C)$, we get
    \begin{align*}
        \mk_L(\varphi\circ G_1 \circ F, \varphi\circ G_2 \circ F)& = \sup\{ \big|\varphi(G_1(F(a))) - \varphi(G_2(F(a)))\big| : a \in A, L(a)\leq 1\}\\
        & \leq \sup\{ \big|\varphi(G_1(F(a))) - \varphi(G_2(F(a)))\big| : a \in A, K(F(a))\leq 1\}\\
        & \leq \sup\{ \big|\varphi(G_1(b)) - \varphi(G_2(b))\big| : b \in B, K(b)\leq 1\}\\
        & = \mk_K(\varphi\circ G_1, \varphi\circ G_2).
    \end{align*}
    Taking the sup over all  $\varphi \in S(C)$ gives that \eqref{chaining-ucp2} holds.

    Finally, assume that $K\circ F_1 \leq L$ or $K\circ F_2 \leq L$. By symmetry, it suffices to consider the case where $K\circ F_2 \leq L$. Then, using the triangle inequality, \eqref{chaining-ucp1} with $G=G_1$, and \eqref{chaining-ucp2} with $F=F_2$,  we get
    \begin{equation*}
        d'_{L}(G_1 \circ F_1, G_2 \circ F_2) \leq d'_{L}(G_1 \circ F_1, G_1 \circ F_2) + d'_{L}(G_1 \circ F_2, G_2 \circ F_2) \leq d_L(F_1,F_2) + d_K(G_1,G_2),
    \end{equation*}
    showing that \eqref{chaining-ucp3} holds.
\end{proof}

As a consequence, chaining for stabilized extended metrics is satisfied in the following form.

\begin{corollary}\label{cor:chaining-for-pullback-metric}
    Let $A$, $B$ and $C$ be unital $C^*$-algebras, $L$ be a seminorm on $A$ and $K$ be a seminorm on $B$. Let $\{L_n\}_{n\in \bbN}$ be an upward directed sequence of seminorms adapted to $L$ and $\{K_n\}_{n\in \bbN}$ be one adapted to $K$. Denote then by $d_L^{\, \rm stab}$, $d_K^{\, \rm stab}$ and ${d}_L^{\,'\,\rm stab}$ the stabilized extended metrics on $\UCP(A, B)$, $\UCP(B, C)$ and $\UCP(A, C)$ associated with these sequences, cf.~\cref{Ln2}.
    
    Let $F, F_1, F_2 \in \UCP(A,B)$, and $G, G_1, G_2 \in \UCP(B,C)$. Then we have
    \begin{equation}\label{chaining-ucp-1}
        {d}_L^{\,'\,\rm stab}(G \circ F_1, G \circ F_2) \leq d_L^{\, \rm stab}(F_1,F_2).
    \end{equation}
    Further, if $K_n\circ (\id_n \otimes F) \leq L_n$ for every $n\in \bbN$, then
    \begin{equation}\label{chaining-ucp-2}
        {d}_L^{\,'\,\rm stab}(G_1 \circ F, G_2 \circ F) \leq d_K^{\, \rm stab}(G_1,G_2).
    \end{equation}
    Finally, if $K_n\circ (\id_n \otimes F_1) \leq L_n$ or $K_n\circ (\id_n \otimes F_2) \leq L_n$ for every $n\in \bbN$, then
    \begin{equation}\label{chaining-ucp-3}
        {d}_L^{\,'\,\rm stab}(G_1 \circ F_1, G_2 \circ F_2) \leq d_L^{\, \rm stab}(F_1,F_2) + d_K^{\, \rm stab}(G_1,G_2)
    \end{equation}
\end{corollary}
\begin{proof}
    These inequalities follow by applying \cref{chaining-ucp} at each level and using the limit formulas for $d_L^{\, \rm stab}$, $d_K^{\, \rm stab}$ and ${d}_L^{\,'\,\rm stab}$ provided by \cref{Ln2}.
\end{proof}

\begin{remark}
    Assume that the seminorms $L$ and $K$ in the theorem above satisfy that $K\circ F\leq L$ for some $F\in \UCP(A, B)$, and that $L_n:=L^\star_n$ and $K_n:=K^\star_n$ for every $n\in \bbN$, where $\star\in \{1, \infty, S\}$, cf.~\cref{upward-ex}. Then it is a routine matter to check that $K_n\circ (\id_n \otimes F) \leq L_n$ holds for every $n\in \bbN$.
\end{remark}

Before concluding this section with a couple of examples, let us point out that if $(A, L)$ is a compact quantum metric space, then $d_L$ is a bounded metric on $\UCP(A, B)$, that is,  $\UCP(A,B)$ has a finite diameter with respect to~$d_L$, the reason being that $S(A)$ is then known to have finite diameter with respect to~$\mk_L$. Note that even if $d_L$ is a bounded metric, and the sequence $\{d_n\}_{n \in \bbN}$ is stable with $d_1=d_L$, where $d_n$ is an extended metric on $\UCP(M_n(\bbC) \otimes A, M_n(\bbC) \otimes B)$, there is a priori no reason to expect $d_n$ (or even $\delta_{1_n \otimes L}$ from \cref{Ln1}) to be a bounded metric on $\UCP(M_n \otimes A, M_n \otimes B)$ for $n\geq 2$. In general, it would be good to know conditions under which the extended metric $d_L$ associated with a seminorm $L$ on $A$ (resp.~$d_L^{\, \rm stab}$ constructed in \cref{Ln2}) is a genuine metric on $\UCP(A, B)$.

\begin{example}\label{ergodic}
    Assume that $A$ admits an ergodic (strongly continuous) action $\alpha$ of a compact group $K$, and that $\kappa$ is a continuous length function on $K$ satisfying that $\kappa(k) = 0$ only if $k = 1_K$.  As shown by Rieffel in \cite[Theorem 2.3]{Rieffel98}, $(A, L_{\alpha, \kappa})$ is then a compact quantum metric space, where $L_{\alpha, \kappa}$ is the Lipschitz seminorm on $A$ defined by
    \begin{equation*}
        L_{\alpha, \kappa}(a) = \sup\big\{{\|\alpha_k(a) -a\|}/{\kappa(k)} : k \in K\setminus\{1_K\}\big\}.
    \end{equation*}
    Setting
    \begin{equation*}
        A_{\alpha, \kappa} := \{ a \in A : L_{\alpha, \kappa}(a) \leq 1 \} = \{ a \in A : \|\alpha_k(a) - a\| \leq \kappa(k) \text{ for all } k \in K \},
    \end{equation*}
    we get a bounded metric $\mk_L$ on $S(A)$, given by
    \begin{equation*}
        \mk_L(\varphi, \psi) = \sup\{ |\varphi(a) -\psi(a)| : a \in A_{\alpha, \kappa}\},
    \end{equation*}
    and an associated bounded metric $d_{L_{\alpha, \kappa}}$ on $\UCP(A, A)$. The sequence $\{\delta_{1_n\otimes L_{\alpha, \kappa}}\}_{n\in \bbN}$ is stable by \cref{Ln1}. Concerning the chaining property from \cref{chaining-ucp} (with $A=B=C$), it will apply whenever $F_1$ or  $F_2$ in $\UCP(A, A)$ is an $L_{\alpha, \kappa}$-contraction. As an example of such a map, let us check that this property holds for any $F\in \UCP(A, A)$ such that $F\circ \alpha_k = \alpha_k \circ F$ for all $k\in K$. Indeed, for every $k\in K$ and $a \in A$, we then have
    \begin{equation*}
        \|\alpha_k(F(a)) -F(a)\| = \|F(\alpha_k(a) -a)\| \leq \|F\| \, \|\alpha_k(a) -a\| =  \|\alpha_k(a) -a\|,
    \end{equation*}
    since $\|F\| = \|F(1_A)\| = 1$. Thus, for every $a\in A$, we get
    \begin{equation*}
        L_{\alpha, \kappa}(F(a)) = \sup_{k \neq 1_K} \big\{{\|\alpha_k(F(a)) -F(a)\|}/{\kappa(k)}\big\} \leq  \sup_{k \neq 1_K} \big\{{\|\alpha_k(a) -a\|}/{\kappa(k)}\big\} = L_{\alpha, \kappa}(a),
    \end{equation*}
    as we wanted to verify.

    To be more concrete, let us assume that the compact group $K$ is abelian. Let then $G=\widehat{K}$ be the dual group of $K$, pick $\sigma \in Z^2(G, \bbT)$, and let $\alpha$ be the ergodic dual action of $K$ on $A:=C_r^*(G, \sigma)$ \cite{Rieffel2004}, determined for each $k\in K$ by $\alpha_k(\lambda_g^\sigma) = \langle k, g\rangle \, \lambda_g^\sigma$ for all $g \in G$. Note that each $\alpha_k$ is the multiplier of $A$ associated with the positive definite function $g\mapsto \langle k, g\rangle$ on $G$. Hence, for each $\phi\in P_1(G)$, the associated multiplier $M_\phi \in \UCP(A, A)$  commutes with every $\alpha_k$, and is therefore an $L_{\alpha, \kappa}$-contraction, regardless of the choice of $\kappa$. Also, by Bochner's Theorem, every $\phi \in P_1(G)$ is the Fourier-Stieltjes transform of a unique probability Borel measure on $K$.
\end{example}

\begin{example}\label{ergodic2}
    As a special case of the previous example, let $\bbZ_n=\{0,1, \ldots, n-1\}$ denote the cyclic group with $n$ elements and set $\zeta := e^{i \frac{2\pi}{n}}$. Consider the ergodic action $\alpha$ of $K=\bbZ_n\times \bbZ_n$ on $A:=B(\ell^2(\bbZ_n))\simeq M_n(\bbC)$ implemented by the discrete Weyl operators \cite[Subsection 4.1.2]{W}, i.e.,
    \begin{equation*}
        \alpha_{(a,b)}(X) = W_{a,b}XW_{a,b}^* \quad \text{for all } X \in A
    \end{equation*}
    for each $(a,b) \in K$, where $W_{a, b}= U^aV^b$, $U$ and $V$ being the unitary operators on $\ell^2(\bbZ_n)$ satisfying $U(e_j)= e_{j+1}$ and $V(e_j) = \zeta^j e_j$ for $j=0, 1, \ldots, n-1$, and $\{e_j\}_{j=0}^{n-1}$ denoting the standard basis of $\ell^2(\bbZ_n)$. The simplest choice of a length function on $K$ is given by $\kappa(a,b)=1$ if $(a,b)\neq (0,0)$, while $\kappa(0,0)=0$. The associated Lipschitz seminorm $L_{\alpha, \kappa}$ on $A$ is then given by
    \begin{equation*}
        L_{\alpha, \kappa}(X) = \max\{\|W_{a, b} X - XW_{a,b}\|: (a,b)\in K\}.
    \end{equation*}
    Linear maps from $A$ into itself that commute with all $\alpha_{a,b}$ are called Weyl-covariant in \cite{W}. A Weyl-covariant map $F$ in $\UCP(A,A)$ can be equivalently described as a mixed Weyl-unitary, that is, $F$ is of the form
    \begin{equation*}
        F(X) = \sum_{(a,b)\in K} p(a,b) W_{a,b}XW_{a,b}^*
    \end{equation*}
    for some probability distribution $p:K\to [0,1]$, cf.~\cite[Corollary 4.15]{W}. Thus we get that the chaining property for $d_{L_{\alpha,\kappa}}$ is satisfied whenever $F_1$ or $F_2$ is a mixed Weyl-unitary.

    Finally, note that $K$ is its own dual and that $A$ is $*$-isomorphic to $C_r^*(K, \sigma)$, where $\sigma\in Z^2(G, \bbT)$ is given by $\sigma((a,b),(a',b'))= \zeta^{ba'}$. Thus we could also have reached the above conclusion by making use of the second part of \cref{ergodic}.
\end{example}

\begin{example}
    Let $G$ be a countable discrete group, $l \colon G \to [0,\infty)$ be a proper length function, and $\sigma \in Z^2(G,\bbT)$, so we can form the spectral triple $(A, \ell^2(G), \partial_l)$, where $A:=C_r^*(G, \sigma)$.  Then we get a sequence $\{\delta_{1_n\otimes L_{\partial_l}}\}_{n\in \bbN}$ which is stable, cf.~\cref{Ln1}. If $(A, L_{\partial_l})$ is a compact quantum metric space, then $d_L$ is a bounded metric. For examples where this is known to hold, see, for instance, \cite{Rieffel02, OzawaRieffel, ChristRieffel, LongWu2021, Farsi-et-al}. Moreover, as shown in \cref{lemma:L-cont-multipliers}, if  $\phi \in P_1(G)$, then $M_\phi\in \UCP(A, A)$ is a $L_{\partial_l}$-contraction; thus this class of unital completely positive maps on $A$  can be used when applying the chaining property from \cref{chaining-ucp}. 
\end{example}

\section{Tracial \texorpdfstring{$C^*$}{C*}-algebras and completely positive maps}\label{sec:tracial-Cstar-algebras-and-cp-maps}

In this section we consider a general $C^*$-algebra $A$ and a $C^*$-algebra $B$ having a trace $\tau$. We will define a map $\omega_\tau \colon \Hom(A,B) \to \Hom(A \odot B^{\rm op}, \bbC)$, which is an analogue of the Choi-Jamio\l{}kowski isomorphism when $A$ and $B$ are matrix algebras. Further, we will characterize the linear maps $F \colon A \to B$ satisfying that $\omega_\tau(F)$ extends to a state on $A \otimes_{\rm max} B^{\rm op}$ and call these maps \emph{trace channels}. In the case where $B$ is unital and $\tau$ is amenable, Kirchberg's Theorem, \cref{thm:Kirchberg's Theorem for continuity of maximally entangled state}, will give that $\omega_\tau(F)$ extends to a state on $A \otimes_{\rm min} B^{\rm op}$ as well, cf. \cref{thm:maps with states as associated functionals}.

\subsection{The map \texorpdfstring{$\omega_\tau$}{omega-tau} and trace channels}\label{sec:omega-tau-and-trace-channels}

\begin{definition}\label{def:omega_tau}
    Let $A$ and $B$ be $C^*$-algebras, and suppose that $\tau$ is a trace on $B$. For any linear map $F \colon A \to B$ we define the linear functional $\omega_\tau(F) \colon A \odot B^{\rm op} \to \bbC$ by setting
    \begin{equation}\label{eqn:omega_tau def}
        \omega_\tau(F)(a \otimes b^{\rm op}) = \tau(F(a)b)
    \end{equation}
    for any $a \in A$ and $b \in B$. By the universal property of the algebraic tensor product, this uniquely defines $\omega_\tau(F)$ as a linear functional on $A \odot B^{\rm op}$. It is clear that the assignment $F \mapsto \omega_\tau(F)$ is linear, hence we get a linear map $\omega_\tau \colon \Hom(A,B) \to \Hom(A \odot B^{\rm op}, \bbC)$.
\end{definition}

\begin{remark}\label{rem:omega_tau as composition with mu_tau}
    Note that $\omega_\tau(F)$ is given by the following composition
    \begin{equation*}
        \xymatrixcolsep{50pt}
        \xymatrix{
            A \odot B^{\rm op} \ar[r]^-{F \odot \id_{B^{\rm op}}}
            & B \odot B^{\rm op} \ar[r]^-{\mu_\tau}
            & \bbC,
        }
    \end{equation*}
 (where $\mu_\tau$ is defined in \cref{sec:tensor-norms-etc}). We will often make use of this fact.
\end{remark}

We are furthermore interested in how $\omega_\tau$ interacts with different $C^*$-norms on $A \odot B^{\rm op}$. In particular, if $\delta$ is a $C^*$-norm on $A \odot B^{\rm op}$, we want to know for which linear maps $F \colon A \to B$ do we have that $\omega_\tau(F)$ is continuous with respect to $\delta$. To that end, we make the following definition.

\begin{definition}\label{def:linear maps with continuous associated functional}
    Let $A$ and $B$ be $C^*$-algebras, and suppose that $\tau$ is a trace on $B$. For a $C^*$-norm $\delta$ on $A \odot B^{\rm op}$ define $\calL_\tau^\delta(A,B)$ to be the set of linear maps $F \colon A \to B$ such that $\omega_\tau(F)$ is continuous with respect to $\delta$. In this case we use the notation $\omega_\tau^\delta(F)$ to denote the (unique) extension of $\omega_\tau(F)$ to $A \otimes_\delta B^{\rm op}$. If either $A$ or $B$ is nuclear, no extra decoration will be made to denote the extension of $\omega_\tau(F)$ to $A \otimes B^{\rm op}$.

    In the special cases where $\delta = \| \cdot \|_{\rm min}$ and $\delta = \| \cdot \|_{\rm max}$, we will simply use the notation $\calL_\tau^{\rm min}(A,B)$ and $\calL_\tau^{\rm max}(A,B)$, respectively.
\end{definition}

\begin{remark}
    In this definition we are considering the possibility that $\omega_\tau(F)$ may be continuous in some $C^*$-norm on $A \odot B^{\rm op}$ without assuming anything about the continuity of $\mu_\tau$. If $\mu_\tau$ is continuous (as it always is with respect to the maximal norm), then continuity of $\omega_\tau(F)$ follows by continuity of $F \odot \id_{B^{\rm op}}$.
\end{remark}

\begin{proposition}\label{prop:properties of maps with cont associated functional}
    Let $A$ and $B$ be $C^*$-algebras. Suppose that $\tau$ is a trace on $B$ and that $\delta$ is a $C^*$-norm on $A \odot B^{\rm op}$. Then,
    \begin{enumerate}
        \item $\calL_\tau^\delta(A,B)$ is a subspace of $\Hom(A,B)$,
        \item the assignment $F \mapsto \omega_\tau^\delta(F)$ defines a linear map $\omega_\tau^\delta \colon \calL_\tau^\delta(A,B) \to (A \otimes_\delta B^{\rm op})^*$, and
        \item if $\tau$ is faithful, then the map $\omega_\tau^\delta$ is a linear embedding.
    \end{enumerate}
\end{proposition}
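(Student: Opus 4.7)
The plan is to exploit the linearity of $\omega_\tau$ together with the standard fact that a continuous linear functional on a dense subspace extends uniquely (and linearly) to the completion. None of the three items should require new ideas beyond unpacking the definitions.

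For part (1), I would use that $\omega_\tau \colon \Hom(A,B) \to \Hom(A \odot B^{\rm op}, \bbC)$ is linear by construction, together with the elementary observation that the $\delta$-continuous linear functionals on $A \odot B^{\rm op}$ form a linear subspace of $\Hom(A \odot B^{\rm op}, \bbC)$. Then $\calL_\tau^\delta(A,B)$ is simply the preimage of this subspace under $\omega_\tau$, and preimages of subspaces under linear maps are subspaces.

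For part (2), I would invoke density of $A \odot B^{\rm op}$ inside $A \otimes_\delta B^{\rm op}$ to guarantee that whenever $\omega_\tau(F)$ is $\delta$-continuous, it extends uniquely to an element $\omega_\tau^\delta(F) \in (A \otimes_\delta B^{\rm op})^*$; so the assignment $F \mapsto \omega_\tau^\delta(F)$ at least lands in the target space. Linearity of this assignment then follows from linearity of $\omega_\tau$ combined with uniqueness of the extension: the two continuous functionals $\omega_\tau^\delta(F + \lambda G)$ and $\omega_\tau^\delta(F) + \lambda \, \omega_\tau^\delta(G)$ agree on the dense subspace $A \odot B^{\rm op}$ and hence coincide on all of $A \otimes_\delta B^{\rm op}$.

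For part (3), I would directly verify that $\omega_\tau^\delta$ has trivial kernel. Assuming $\omega_\tau^\delta(F) = 0$, restriction to $A \odot B^{\rm op}$ yields $\tau(F(a)b) = 0$ for every $a \in A$ and every $b \in B$. Fixing $a$ and choosing $b = F(a)^* \in B$ gives $\tau(F(a)F(a)^*) = 0$, and writing $F(a)F(a)^* = (F(a)^*)^* F(a)^*$ lets me apply faithfulness of $\tau$ to the element $F(a)^*$, yielding $F(a)^* = 0$ and hence $F(a) = 0$. Since $a$ was arbitrary, $F = 0$.

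I do not expect a substantive obstacle in this proposition; the entire argument is a routine unpacking of definitions. The only mildly delicate step is the choice $b = F(a)^*$ in part (3), which relies on $F(a)$ being an honest element of $B$ so that faithfulness of $\tau$ on $B$ can be applied directly.
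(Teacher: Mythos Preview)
Your proposal is correct and follows essentially the same approach as the paper's proof: linearity of $\omega_\tau$ for parts (1) and (2), uniqueness of the continuous extension for part (2), and faithfulness of $\tau$ applied to $\tau(F(a)b)=0$ for part (3). The only cosmetic difference is that you make the choice $b = F(a)^*$ explicit in part (3), whereas the paper simply asserts that faithfulness of $\tau$ forces $F(a)=0$; your version is a faithful unpacking of that step.
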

\begin{proof}
    Let $F \in \calL_\tau^\delta(A,B)$. That $\calL_\tau^\delta(A,B)$ is a vector space follows by the fact that $\omega_\tau$ is linear.

    That $\omega_\tau^\delta$ is linear follows by linearity of $\omega_\tau$ and the fact that $\omega_\tau^\delta(F)$ is the \emph{unique} extension of $\omega_\tau(F)$ to $A \otimes_\delta B^{\rm op}$.

    Suppose now that $\tau$ is faithful. We want to show that $\omega_\tau^\delta(F) = 0$ if and only if $F = 0$. Since $\omega_\tau^\delta(F)$ is the unique extension of $\omega_\tau(F)$ it is sufficient to show that $\omega_\tau(F) = 0$ if and only if $F = 0$. Suppose that $\omega_\tau(F) = 0$. Then for any $a \in A$ and $b \in B$ we have that $\tau(F(a)b) = 0$. Since $\tau$ is faithful this means that $F(a) = 0$ for every $a \in A$, hence $F = 0$.
\end{proof}

We will show that the completely positive maps $\CP(A,B)$ from $A$ to $B$ are precisely the maps $F \in \calL_\tau^{\rm max}(A,B)$ for which $\omega_\tau^{\rm max}(F)$ is positive. First we show that if $F \in \CP(A,B)$, then $F \in \calL_\tau^{\rm max}(A,B)$.

\begin{proposition}\label{prop:cp maps have cont associated functional}
    Let $A$ and $B$ be $C^*$-algebras, and suppose that $\tau$ is a trace on $B$. Then $\CP(A,B) \subset \calL_\tau^{\rm max}(A,B)$.

    Furthermore, if $B$ is unital and $\tau$ is amenable, then $\CP(A,B) \subset \calL_\tau^{\rm min}(A,B)$.
\end{proposition}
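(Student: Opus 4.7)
The plan is to exploit the factorization noted in Remark \ref{rem:omega_tau as composition with mu_tau}, namely $\omega_\tau(F) = \mu_\tau \circ (F \odot \id_{B^{\rm op}})$, and show that each factor extends continuously to the appropriate tensor product completion. Since the composition of continuous maps is continuous, this immediately yields that $\omega_\tau(F)$ extends continuously in the relevant $C^*$-norm, which is precisely what membership in $\calL_\tau^\delta(A,B)$ requires.

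For the maximal case, I would first invoke \cref{lemma:Continuity of tensor product maps} applied to $F \colon A \to B$ (completely positive by assumption) and $\id_{B^{\rm op}} \colon B^{\rm op} \to B^{\rm op}$ (trivially completely positive). This gives a completely positive, hence bounded, extension
\begin{equation*}
    F \otimes \id_{B^{\rm op}} \colon A \otimes_{\rm max} B^{\rm op} \to B \otimes_{\rm max} B^{\rm op}.
\end{equation*}
Then I would apply \cref{lemma:continuity of maximally entangled state for maximal norm} to obtain the continuous extension $\mu_\tau^{\rm max} \colon B \otimes_{\rm max} B^{\rm op} \to \bbC$. The composition $\mu_\tau^{\rm max} \circ (F \otimes \id_{B^{\rm op}})$ is then a bounded linear functional on $A \otimes_{\rm max} B^{\rm op}$ whose restriction to the dense subspace $A \odot B^{\rm op}$ coincides with $\omega_\tau(F)$, by the defining formula \eqref{eqn:omega_tau def} and the definition of $\mu_\tau$. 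Hence $F \in \calL_\tau^{\rm max}(A,B)$, and we additionally obtain the identification $\omega_\tau^{\rm max}(F) = \mu_\tau^{\rm max} \circ (F \otimes \id_{B^{\rm op}})$.

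The minimal case proceeds by exactly the same scheme, with two replacements: \cref{lemma:Continuity of tensor product maps} is applied to produce the extension $F \otimes \id_{B^{\rm op}} \colon A \otimes_{\rm min} B^{\rm op} \to B \otimes_{\rm min} B^{\rm op}$, and in place of \cref{lemma:continuity of maximally entangled state for maximal norm} one invokes Kirchberg's \cref{thm:Kirchberg's Theorem for continuity of maximally entangled state} under the standing hypotheses that $B$ is unital and $\tau$ is amenable, yielding the continuous extension $\mu_\tau^{\rm min} \colon B \otimes_{\rm min} B^{\rm op} \to \bbC$. Composing these gives the desired extension $\omega_\tau^{\rm min}(F) = \mu_\tau^{\rm min} \circ (F \otimes \id_{B^{\rm op}})$ and proves $F \in \calL_\tau^{\rm min}(A,B)$.

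There is no serious obstacle in this argument; the proof is essentially bookkeeping, and the substantive content is entirely imported from the two cited results. The only point worth flagging is that one should verify that the composition indeed agrees with $\omega_\tau(F)$ on the algebraic tensor product (not merely that \emph{some} continuous extension exists), but this is immediate from evaluating both sides on a simple tensor $a \otimes b^{\rm op}$.
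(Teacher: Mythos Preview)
Your proof is correct and follows essentially the same approach as the paper: both use the factorization $\omega_\tau(F) = \mu_\tau \circ (F \odot \id_{B^{\rm op}})$ from \cref{rem:omega_tau as composition with mu_tau}, then invoke \cref{lemma:Continuity of tensor product maps} for the first factor and \cref{lemma:continuity of maximally entangled state for maximal norm} (resp.\ \cref{thm:Kirchberg's Theorem for continuity of maximally entangled state}) for the second. The paper's version is simply terser, while you spell out the details explicitly.
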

\begin{proof}
    In the case that $F$ is completely positive, we have by \cref{lemma:Continuity of tensor product maps} and \cref{lemma:continuity of maximally entangled state for maximal norm} that $\omega_\tau(F)$ is continuous in the maximal norm. This proves the first claim.

    In the case that $B$ is unital and $\tau$ is an amenable trace, we get by \cref{lemma:Continuity of tensor product maps} and \cref{thm:Kirchberg's Theorem for continuity of maximally entangled state} that $\omega_\tau(F)$ is continuous in the spatial norm.
\end{proof}

\begin{remark}
    If $B$ is quasidiagonal, then $B$ has at least one amenable tracial state $\tau$, cf.~\cite[Proposition 7.1.16]{BO}, so $\omega_\tau(F)$ is continuous in the spatial norm on $A \odot B^{\rm op}$ for every $F$ in $\CP(A,B)$ in this case. For examples of unital, separable, non-nuclear, quasidiagonal $C^*$-algebras, see~\cite[Remark 7.3]{Pisier2020}.
\end{remark}

\begin{remark}
    Whenever $F \in {\rm Span}(\CP(A,B))$, i.e., $F$ is decomposable, we have that $\omega_\tau(F)$ is continuous in the maximal norm on $A \odot B^{\rm op}$. This follows in the same way as in the proof of \cref{prop:cp maps have cont associated functional}, now using \cite[Proposition 6.11]{GP}, which implies that
    \begin{equation*}
        F \odot \textup{id}_{B^\textup{op}} \colon A \odot B^\textup{op} \to B \odot B^\textup{op}
    \end{equation*}
    extends to a map
    \begin{equation*}
        F \otimes \textup{id}_{B^\textup{op}} \colon A \otimes_{\rm max} B^\textup{op} \to B \otimes_{\rm max} B^\textup{op}
    \end{equation*}
    for every $F \in {\rm Span}(\CP(A,B))$. On the other hand, if we assume that $\tau$ is amenable, then we may use \cite[Proposition 1.11]{GP} to deduce that $\omega_\tau(F)$ is continuous in the spatial norm on $A \odot B^{\rm op}$ for every $F \in \CB(A,B)$, where $\CB(A,B)$ denotes the set of all completely bounded maps from $A$ to $B$.
\end{remark}

\begin{remark}\label{rem:finite rank cp maps}
    A sufficient condition for the continuity of $\omega_\tau(F)$ in the spatial norm on $A \odot B^{\rm op}$ depending only on $F$ is as follows. Assume $F$ is the pointwise limit of a net of finite rank completely positive maps from $A$ to $B$. Then it follows from \cite[Theorem 7.11]{GP} that the map
    \begin{equation*}
        F \odot \textup{id}_{B^\textup{op}} \colon A \odot B^\textup{op} \to B \odot B^\textup{op}
    \end{equation*}
    extends to a continuous map
    \begin{equation*}
        F \otimes \textup{id}_{B^\textup{op}} \colon A \otimes_{\rm min} B^\textup{op} \to B \otimes_{\rm max} B^\textup{op}.
    \end{equation*}
    As we know from \cref{lemma:continuity of maximally entangled state for maximal norm}, $\mu_{\tau}$ is continuous on $B \odot B^{\textup{op}}$ with respect to the maximal norm, so we get that $\omega_\tau(F)$ is continuous with respect to the spatial norm on $A \odot B^{\rm op}$ as it is given by the composition of the following continuous maps:
    \begin{equation*}
        \xymatrixcolsep{50pt}
        \xymatrix{
            A \otimes_{\rm min} B^{\rm op} \ar[r]^-{F \otimes \id_{B^{\rm op}}} & B \otimes_{\rm max} B^{\rm op} \ar[r]^-{\mu_\tau^{\rm max}} & \bbC
        }
    \end{equation*}
    Hence, $F \in \calL_\tau^{\rm min}(A,B)$ in this case.
\end{remark}

To prove the desired characterization of completely positive maps from $A$ to $B$, we need to understand how the map $\omega_\tau$ interacts with tensor products of linear maps.

\begin{lemma}\label{lemma:flip of omega_tau of tensor prods}
    Let $A$ and $C$ be $C^*$-algebras, and $B$ and $D$ be $C^*$-algebras with traces $\tau_B$ and $\tau_D$, respectively. Then for any $F \in \CP(A,B)$ and $G \in \CP(C,D)$ we have that
    \begin{equation}\label{eqn:flip of omega_tau max tensor}
        (\Sigma_{[23]})^*\omega_{\tau_B \otimes \tau_D}^{\rm max}(F \otimes G)
        = \omega_{\tau_B}^{\rm max}(F) \otimes \omega_{\tau_D}^{\rm max}(G).
    \end{equation}

    Furthermore, if $B$ and $D$ are unital, and $\tau_B$ and $\tau_D$ are amenable, then
    \begin{equation}\label{eqn:flip of omega_tau min tensor}
        (\Sigma_{[23]})^*\omega_{\tau_B \otimes \tau_D}^{\rm min}(F \otimes G)
        = \omega_{\tau_B}^{\rm min}(F) \otimes \omega_{\tau_D}^{\rm min}(G).
    \end{equation}
\end{lemma}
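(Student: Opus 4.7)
The plan is to establish both identities by verifying them on elementary tensors and then invoking density, but the technical heart is ensuring that both sides genuinely live on the same completed tensor product so that the equality on the algebraic core does indeed propagate.

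First I would handle the algebraic core. On an elementary tensor $(a\otimes c)\otimes (b\otimes d)^{\rm op}$ in $(A\odot C)\odot (B\odot D)^{\rm op}$, direct computation from \cref{def:omega_tau} and the fact that $\tau_B\otimes \tau_D$ is the obvious product trace gives
\begin{align*}
    \omega_{\tau_B\otimes \tau_D}(F\otimes G)\bigl((a\otimes c)\otimes (b\otimes d)^{\rm op}\bigr)
        &= (\tau_B\otimes \tau_D)\bigl(F(a)b\otimes G(c)d\bigr) \\
        &= \tau_B(F(a)b)\,\tau_D(G(c)d) \\
        &= \bigl(\omega_{\tau_B}(F)\otimes \omega_{\tau_D}(G)\bigr)(a\otimes b^{\rm op}\otimes c\otimes d^{\rm op}).
\end{align*}
Since $\Sigma_{[23]}$ sends $a\otimes b^{\rm op}\otimes c\otimes d^{\rm op}$ to $a\otimes c\otimes b^{\rm op}\otimes d^{\rm op}$ (where one uses the canonical identification of $(B\odot D)^{\rm op}$ with $B^{\rm op}\odot D^{\rm op}$), the desired identity on the algebraic tensor product follows at once.

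Next I would address continuity in the maximal case. By \cref{lemma:Continuity of tensor product maps}, the map $F\otimes G$ extends to a completely positive map $(A\otimes_{\rm max} C)\to (B\otimes_{\rm max} D)$, so by \cref{prop:cp maps have cont associated functional} the functional $\omega_{\tau_B\otimes \tau_D}^{\rm max}(F\otimes G)$ is a bounded functional on $(A\otimes_{\rm max} C)\otimes_{\rm max} (B\otimes_{\rm max} D)^{\rm op}$. Pulling back by the $*$-isomorphism $\Sigma_{[23]}$ (which is a $*$-isomorphism in every $C^*$-norm) produces a bounded functional on $A\otimes_{\rm max} B^{\rm op}\otimes_{\rm max} C\otimes_{\rm max} D^{\rm op}$. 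On the right-hand side, $\omega_{\tau_B}^{\rm max}(F)\otimes \omega_{\tau_D}^{\rm max}(G)$ is a product of bounded functionals on maximal tensor products, which extends by the universal property of the maximal tensor product to the same space. Since both extensions agree on the dense subspace $A\odot B^{\rm op}\odot C\odot D^{\rm op}$, they coincide.

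Finally, the minimal case is the main obstacle. Here I must justify that $\omega_{\tau_B\otimes \tau_D}^{\rm min}(F\otimes G)$ is even defined. By \cref{thm:Kirchberg's Theorem for continuity of maximally entangled state}, this reduces to showing that the product trace $\tau_B\otimes \tau_D$ is amenable on $B\otimes_{\rm min} D$ when both $\tau_B$ and $\tau_D$ are amenable; this is a standard consequence of \cref{def:amenability of tau} by taking the spatial tensor product of hypertraces $\varphi_B\otimes \varphi_D$ on $B(H_B)\otimes B(H_D)\subseteq B(H_B\otimes H_D)$ and invariance under unitaries in $B\otimes D$, together with the observation that unitaries in the algebraic tensor product are weak$^*$-dense among unitaries in the closure in a sense sufficient for the invariance. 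Given this, $\omega_{\tau_B\otimes \tau_D}^{\rm min}(F\otimes G)$ and $\omega_{\tau_B}^{\rm min}(F)\otimes \omega_{\tau_D}^{\rm min}(G)$ are both bounded on the appropriate minimal tensor product, and the continuity of $\Sigma_{[23]}$ in every $C^*$-norm lets us conclude as in the maximal case by density. If the amenability of $\tau_B\otimes \tau_D$ proves technically awkward to establish directly, an alternative is to rewrite $\omega_{\tau_B\otimes \tau_D}(F\otimes G)$ via \cref{rem:omega_tau as composition with mu_tau} as $\mu_{\tau_B\otimes \tau_D}\circ ((F\otimes G)\odot \id_{(B\otimes D)^{\rm op}})$ and factor the multiplication map through $\mu_{\tau_B}^{\rm min}\otimes \mu_{\tau_D}^{\rm min}$ after applying an appropriate flip, so that continuity follows directly from \cref{thm:Kirchberg's Theorem for continuity of maximally entangled state} applied to $B$ and $D$ separately.
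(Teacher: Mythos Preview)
Your proposal is correct and follows essentially the same approach as the paper: verify the identity on elementary tensors, then appeal to continuity and density, with the well-definedness in the maximal case following from \cref{lemma:Continuity of tensor product maps} and \cref{prop:cp maps have cont associated functional}. The only difference is in the minimal case, where the paper simply cites \cite[Proposition 6.3.5]{BO} for the amenability of $\tau_B\otimes\tau_D$ rather than sketching it; your hypertrace sketch is correct in spirit (and your alternative via $\mu_{\tau_B}^{\rm min}\otimes\mu_{\tau_D}^{\rm min}$ is also valid), though the density-of-algebraic-unitaries step would need more care than you indicate.
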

\begin{proof}
    By \cref{lemma:Continuity of tensor product maps} and \cref{prop:cp maps have cont associated functional} we have that all the maps in the following diagram are well-defined.
    \begin{equation}\label{eqn:diagram for flip of omega_tau max tensor}
    \xymatrixcolsep{50pt}
    \xymatrixrowsep{30pt}
        \xymatrix{
            \CP(A,B) \times \CP(C,D) \ar[d]_-{\otimes} \ar[r]^-{\omega_{\tau_B}^{\rm max} \times \omega_{\tau_D}^{\rm max}} & (A \otimes_{\rm max} B^{\rm op})^* \times (C \otimes_{\rm max} D^{\rm op})^* \ar[dd]^-{\otimes} \\
            \CP(A \otimes_{\rm max} C, B \otimes_{\rm max} D) \ar[d]_-{\omega_{\tau_B \otimes \tau_D}^{\rm max}} & \\
            (A \otimes_{\rm max} C \otimes_{\rm max} B^{\rm op} \otimes_{\rm max} D^{\rm op})^* \ar[r]_-{(\Sigma_{[23]})^*} & (A \otimes_{\rm max} B^{\rm op} \otimes_{\rm max} C \otimes_{\rm max} D^{\rm op})^*
        }
    \end{equation}
    We have that this diagram commutes if and only if \eqref{eqn:flip of omega_tau max tensor} holds for all $F \in \CP(A,B)$ and $G \in \CP(C,D)$. Hence, we simply want to show that this diagram commutes. For $a \in A$, $b \in B$, $c \in C$, and $d \in D$ we have that
    \begin{align*}
        ((\Sigma_{[23]})^*\omega_{\tau_B \otimes \tau_D}^{\rm max}(F \otimes G))(a \otimes b^{\rm op} \otimes c \otimes d^{\rm op})
        & = \omega_{\tau_B \otimes \tau_D}^{\rm max}(F \otimes G)(a \otimes c \otimes b^{\rm op} \otimes d^{\rm op}) \\
        & = (\tau_B \otimes \tau_D)((F(a) \otimes G(c))(b \otimes d)) \\
        & = (\tau_B \otimes \tau_D)(F(a)b \otimes G(c)d) \\
        & = \tau_B(F(a)b)\tau_D(G(c)d) \\
        & = \omega_{\tau_B}^{\rm max}(F)(a \otimes b^{\rm op})\omega_{\tau_D}^{\rm max}(G)(c \otimes d^{\rm op}) \\
        & = (\omega_{\tau_B}^{\rm max}(F) \otimes \omega_{\tau_D}^{\rm max}(G))(a \otimes b^{\rm op} \otimes c \otimes d^{\rm op}).
    \end{align*}
    By linearity and continuity of the relevant maps, and the fact that $A \odot B^{\rm op} \odot C \odot D^{\rm op}$ is dense in $A \otimes_{\rm max} B^{\rm op} \otimes_{\rm max} C \otimes_{\rm max} D^{\rm op}$, we get that the above diagram commutes.

    Now suppose that $B$ and $D$ are unital, and that $\tau_B$ and $\tau_D$ are amenable.
    By \cite[Proposition 6.3.5]{BO} we get that $\tau_B \otimes \tau_D$ is an amenable trace on $B \otimes_{\rm min} D$. Hence, everything appearing in the diagram presented in \eqref{eqn:diagram for flip of omega_tau max tensor} is still well-defined with min in place of max by \cref{lemma:Continuity of tensor product maps} and \cref{prop:cp maps have cont associated functional}. That \eqref{eqn:flip of omega_tau min tensor} holds then follows by the same computation we did earlier in this proof.
\end{proof}

\begin{remark}
    Since the $*$-isomorphism $\Sigma_{[23]}$ is idempotent, we have that \eqref{eqn:flip of omega_tau max tensor} and \eqref{eqn:flip of omega_tau min tensor} give rise to analogous equations
    \begin{equation}\label{eqn:flip of omega_tau max tensor alt}
        \omega_{\tau_B \otimes \tau_D}^{\rm max}(F \otimes G)
        = (\Sigma_{[23]})^*(\omega_{\tau_B}^{\rm max}(F) \otimes \omega_{\tau_D}^{\rm max}(G)),
    \end{equation}
    and
    \begin{equation}\label{eqn:flip of omega_tau min tensor alt}
        \omega_{\tau_B \otimes \tau_D}^{\rm min}(F \otimes G)
        = (\Sigma_{[23]})^*(\omega_{\tau_B}^{\rm min}(F) \otimes \omega_{\tau_D}^{\rm min}(G)).
    \end{equation}
\end{remark}

\begin{theorem}\label{thm:characterization of completely positive maps}
    Let $A$ and $B$ be $C^*$-algebras, and let $\tau$ be a faithful trace on $B$. For any $F \in \calL_\tau^{\rm max}(A,B)$ we have that $F$ is completely positive if and only if $\omega_\tau^{\rm max}(F)$ is positive.

    Furthermore, if $B$ is unital, $\tau$ is amenable, and $F \in \calL_\tau^{\rm min}(A,B)$, then $F$ is completely positive if and only if $\omega_\tau^{\rm min}(F)$ is positive.
\end{theorem}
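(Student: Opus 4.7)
The forward direction is a short composition argument: using \cref{rem:omega_tau as composition with mu_tau}, $\omega_\tau^{\rm max}(F)$ factors as $\mu_\tau^{\rm max} \circ (F \otimes \id_{B^{\rm op}})$; since $F$ is completely positive, \cref{lemma:Continuity of tensor product maps} gives that $F \otimes \id_{B^{\rm op}}$ is a completely positive map on $A \otimes_{\rm max} B^{\rm op}$, and composing with the positive functional $\mu_\tau^{\rm max}$ from \cref{lemma:positivity of maximally entangled state} yields positivity of $\omega_\tau^{\rm max}(F)$. The minimal case is identical, now using amenability of $\tau$ to invoke the second assertion of \cref{lemma:positivity of maximally entangled state}.

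For the converse, suppose $\omega_\tau^\delta(F)$ is positive, with $\delta \in \{\textup{min},\textup{max}\}$ under the corresponding hypotheses. First I would show $F$ is $*$-preserving: self-adjointness of the positive functional $\omega_\tau^\delta(F)$, evaluated at $a \otimes b^{\rm op}$ and simplified using traciality and self-adjointness of $\tau$, yields $\tau\bigl((F(a^*) - F(a)^*) b^*\bigr) = 0$ for all $a \in A$ and $b \in B$, and faithfulness of $\tau$ then forces $F(a^*) = F(a)^*$ by testing with $b = (F(a^*) - F(a)^*)^*$. Next I would show $F$ sends positive elements to positive elements. The key computation is
\[
    \omega_\tau^\delta(F)\bigl((a \otimes b^{\rm op})^*(a \otimes b^{\rm op})\bigr) = \tau\bigl(F(a^*a)\, bb^*\bigr) = \tau\bigl(b^* F(a^*a) b\bigr),
\]
where the last equality is traciality; this is $\geq 0$ for all $b \in B$ by positivity. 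Since $F(a^*a)$ is self-adjoint by the previous step, writing $F(a^*a) = X_+ - X_-$ via continuous functional calculus and testing with $b = X_-^{1/2}$ gives $-\tau(X_-^2) \geq 0$, forcing $X_- = 0$ by faithfulness of $\tau$. Thus $F(a^*a) \geq 0$.

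To upgrade to complete positivity, I would amplify the argument to $\id_n \otimes F \colon M_n(A) \to M_n(B)$ and reduce to the previous case. The algebraic identity underlying \cref{lemma:flip of omega_tau of tensor prods},
\[
    \omega_{\Tr_n \otimes \tau}(\id_n \otimes F) = (\Sigma_{[23]})^*\bigl(\omega_{\Tr_n}(\id_n) \otimes \omega_\tau^\delta(F)\bigr),
\]
exhibits $\omega_{\Tr_n \otimes \tau}(\id_n \otimes F)$ on the algebraic tensor product as the pullback, along the $*$-isomorphism $\Sigma_{[23]}$, of a tensor product of positive functionals; it therefore extends to a positive functional on $M_n(A) \otimes_\delta M_n(B)^{\rm op}$, invoking nuclearity of $M_n$ to reconcile the tensor products across factors. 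In the min case, amenability of $\Tr_n \otimes \tau$ on $M_n \otimes B$ follows from amenability of $\tau$ and nuclearity of $M_n$, as in the proof of \cref{lemma:flip of omega_tau of tensor prods}. Since $\Tr_n \otimes \tau$ is automatically faithful, the argument of the preceding paragraph applies to $\id_n \otimes F$ and yields its positivity; as $n$ is arbitrary, $F$ is completely positive. The main obstacle will be the tensor bookkeeping in this final step: verifying that the flip $\Sigma_{[23]}$ and the identifications between $M_n(A)$, $M_n(B)^{\rm op}$ and tensor products thereof are compatible with the chosen norm $\delta$, and that the positivity of $\omega_{\Tr_n}(\id_n) \otimes \omega_\tau^\delta(F)$ is preserved under these rearrangements.
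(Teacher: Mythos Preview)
Your proposal is correct and follows essentially the same route as the paper. The forward direction is identical. For the converse, the paper proceeds slightly more directly: rather than first establishing $*$-preservation and positivity of $F$ at level $n=1$ and then amplifying, it fixes $n$ from the outset and shows $(\Tr_n \otimes \tau)\bigl((\id_n \otimes F)(X)\,Y\bigr) \geq 0$ for all positive $X \in M_n(A)$ and $Y \in M_n(B)$, using exactly your $\Sigma_{[23]}$ factorization $\omega_{\Tr_n \otimes \tau}(\id_n \otimes F) = (\Sigma_{[23]})^*(\mu_{\Tr_n} \otimes \omega_\tau^\delta(F))$, and then invokes faithfulness of $\Tr_n \otimes \tau$. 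This absorbs your steps 1--2 (the explicit $*$-preservation check is unnecessary, since $\tau(ZY)\geq 0$ for all positive $Y$ already forces the imaginary part of $Z$ to vanish by faithfulness). Your organization is more modular, the paper's more streamlined, but the mechanism is the same.
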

\begin{proof}
    Suppose first that $F \in \calL_\tau^{\rm max}(A,B)$ is completely positive. Since $\omega_\tau^{\rm max}(F)$ and $\mu_\tau^{\rm max} \circ (F \otimes \id_{B^{\rm op}})$ agree on the dense subspace $A \odot B^{\rm op}$, we have that $\omega_\tau^{\rm max}(F)$ is given by the following composition:
    \begin{equation*}
        \xymatrixcolsep{50pt}
        \xymatrix{
            A \otimes_{\rm max} B^{\rm op} \ar[r]^-{F \otimes \id_{B^{\rm op}}} & B \otimes_{\rm max} B^{\rm op} \ar[r]^-{\mu_\tau^{\rm max}} & \bbC.
        }
    \end{equation*}
    The first of these maps is completely positive by \cref{lemma:Continuity of tensor product maps}, and the second map is positive by \cref{lemma:positivity of maximally entangled state}. Hence, $\omega_\tau^{\rm max}(F)$ is a composition of positive maps, therefore $\omega_\tau^{\rm max}(F)$ is positive.

    For the converse, assume that $\omega_\tau^{\rm max}(F)$ is positive. Let $n \in \bbN$ and fix $Y \in M_n(\bbC) \otimes_{\rm max} B$ to be positive. It follows that the
    map $\Tr_n \otimes \tau$ is a faithful trace on $M_n(\bbC) \otimes_{\rm max} B$, cf.~\cite[p. 434]{Avitzour1982}, as $M_n(\bbC) \otimes_{\rm max} B \simeq M_n(\bbC) \otimes_{\rm min} B$. Hence, to see that $F$ is $n$-positive it is enough to check that
    \begin{equation*}
        (\Tr_n \otimes \tau)((\id_n \otimes F)(X)Y)
    \end{equation*}
    is positive for every positive element $X \in M_n(\bbC) \otimes_{\rm max} A$ and every positive element $Y \in M_n(\bbC) \otimes_{\rm  max} B$. We compute that
    \begin{align*}
        (\Tr_n \otimes \tau)((\id_n \otimes F)(X)Y)
        & = (\omega_{\Tr_n \otimes \tau}(\id_n \otimes F))(X \otimes Y^{\rm op}) & \text{by \eqref{eqn:omega_tau def}} \\
        & = (\omega_{\Tr_n \otimes \tau}^{\rm max}(\id_n \otimes F))(X \otimes Y^{\rm op}) & \\
        & = ((\Sigma_{[23]})^*(\omega_{\Tr_n}^{\rm max}(\id_n) \otimes \omega_\tau^{\rm max}(F))) (X \otimes Y^{\rm op}) & \text{by \eqref{eqn:flip of omega_tau max tensor alt}} \\
        & = ((\Sigma_{[23]})^*(\mu_{\Tr_n} \otimes \omega_\tau^{\rm max}(F))) (X \otimes Y^{\rm op}) & \\
        & = ((\mu_{\Tr_n} \otimes \omega_\tau^{\rm max}(F)) \circ \Sigma_{[23]}) (X \otimes Y^{\rm op}). &
    \end{align*}
    Note that $X \otimes Y^{\rm op}$ is positive since both $X$ and $Y$ are assumed to be positive. Recall that $\Sigma_{[23]}$ is a $*$-isomorphism, which in particular means that $\Sigma_{[23]}$ is positive. By \cref{lemma:positivity of maximally entangled state} we have that $\mu_{\Tr_n}$ is positive, and we have assumed that $\omega_\tau^{\rm max}(F)$ is positive. Thus, by \cref{lemma:Continuity of tensor product maps} we have that $\mu_{\Tr_n} \otimes \omega_\tau^{\rm max}(F)$ is positive. Since the composition of positive maps is positive, we have that
    \begin{equation*}
        (\Tr_n \otimes \tau)((\id_n \otimes F)(X)Y)
    \end{equation*}
    is positive. This shows that $F$ is $n$-positive. But since $n$ was arbitrary, $F$ is necessarily completely positive.

    In the case where $B$ is unital and $\tau$ is amenable, the argument given above works mutatis mutandis to show that any $F \in \calL_\tau^{\rm min}(A,B)$ is completely positive if and only if $\omega_\tau^{\rm min}(F)$ is positive.
\end{proof}
Note that in \cref{thm:characterization of completely positive maps}, faithfulness of the trace is not necessary for the forward implications. We will however mostly be concerned with faithful traces in this article, so we have chosen to state the theorem in its present form.

By \cref{thm:characterization of completely positive maps} we have that the linear maps from $A$ to $B$ that will correspond to states on $A \otimes_{\rm max} B^{\rm op}$ under the map $\omega_\tau^{\rm max}$ must necessarily be completely positive. To get a precise characterization in \cref{thm:maps with states as associated functionals} we make the following definition.

\begin{definition}\label{def:trace channels}
    Let $A$ be a unital $C^*$-algebra and $B$ be a $C^*$-algebra with a trace $\tau$. Define the set
    \begin{equation*}
        \TC_\tau(A,B) = \{ F \in \CP(A,B) : \tau(F(1_A)) = 1 \}.
    \end{equation*}
    We will call elements of $\TC_\tau(A,B)$ \emph{trace channels (with respect to $\tau$)}.
\end{definition}
We note that if $B$ is unital and $\tau$ is a tracial state, then $\UCP(A,B)$ is contained in $\TC_\tau(A,B)$. Furthermore, the following result can be proved in a straightforward way. 

\begin{lemma}\label{lemma:trace channels is a convex set}
    Let $A$ be a unital $C^*$-algebra and $B$ be a $C^*$-algebra with a trace $\tau$. Then $\TC_\tau(A,B)$ is a closed convex subset of $\CP(A,B)$.
\end{lemma}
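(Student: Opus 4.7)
The plan is to verify the two properties separately, as both follow routinely from the definition of $\TC_\tau(A,B)$ together with basic properties of traces.

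For convexity, the key observation is that the evaluation map $F \mapsto \tau(F(1_A))$ from $\CP(A,B)$ to $\bbC$ is affine, since it is the composition of the linear evaluation $F \mapsto F(1_A)$ with the linear functional $\tau$. Given $F, G \in \TC_\tau(A,B)$ and $t \in [0,1]$, the convex combination $tF + (1-t)G$ lies in $\CP(A,B)$ (as $\CP(A,B)$ is convex), and the affinity immediately gives $\tau\bigl((tF + (1-t)G)(1_A)\bigr) = t + (1-t) = 1$, so the combination lies in $\TC_\tau(A,B)$.

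For closedness, I would work with respect to the point-norm topology on $\CP(A,B)$ (or, more generally, any topology at least as fine as pointwise weak convergence on $B$). First recall that any positive linear functional on a $C^*$-algebra is automatically bounded, so $\tau \colon B \to \bbC$ is continuous. If $(F_\lambda)_\lambda$ is a net in $\TC_\tau(A,B)$ converging to some $F \in \CP(A,B)$ in the point-norm topology, then evaluating at $1_A$ gives $F_\lambda(1_A) \to F(1_A)$ in norm in $B$, and continuity of $\tau$ yields
\begin{equation*}
    \tau(F(1_A)) = \lim_\lambda \tau(F_\lambda(1_A)) = 1,
\end{equation*}
so $F \in \TC_\tau(A,B)$.

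There is no genuine obstacle here; the only subtlety is clarifying which topology on $\CP(A,B)$ we use for the closedness statement, so I would state this choice explicitly (the point-norm topology, which is the standard one in this setting). Everything else reduces to the elementary facts that $\CP(A,B)$ is itself convex, that evaluation at a fixed point is continuous, and that $\tau$ is a bounded functional.
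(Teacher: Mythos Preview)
Your proof is correct and follows essentially the same approach as the paper's own proof: both verify convexity via the affinity of $F \mapsto \tau(F(1_A))$ and closedness via continuity of this same map. Your version is in fact more explicit than the paper's, which simply asserts that $\tau(F(1_A))=1$ is a ``closed condition'' without specifying the topology; your choice to work in the point-norm topology and spell out the net argument is a reasonable clarification.
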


We can now state the main result of this section.

\begin{theorem}\label{thm:maps with states as associated functionals}
    Let $A$ be a unital $C^*$-algebra, $B$ be a $C^*$-algebra with a faithful
    trace
    $\tau$, and $F \in \calL_\tau^{\rm max}(A,B)$. Then the following are equivalent,
    \begin{enumerate}
        \item $\omega_\tau^{\rm max}(F) \in S(A \otimes_{\rm max} B^{\rm op})$, and
        \item $F \in \TC_\tau(A,B)$.
    \end{enumerate}
    In particular, $\omega_\tau^{\rm max}$ restricts to an affine embedding of $\TC_\tau(A,B)$ into $S(A \otimes_{\rm max} B^{\rm op})$.

    Furthermore, if $B$ is unital, $\tau$ is amenable, and $F \in \calL_\tau^{\rm min}(A,B)$, then the following are equivalent,
    \begin{enumerate}
        \item $\omega_\tau^{\rm min}(F) \in S(A \otimes_{\rm min} B^{\rm op})$, and
        \item $F \in \TC_\tau(A,B)$.
    \end{enumerate}
    In this case, $\omega_\tau^{\rm min}$ restricts to an affine embedding of $\TC_\tau(A,B)$ into $S(A \otimes_{\rm min} B^{\rm op})$.
\end{theorem}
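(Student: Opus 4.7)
The plan is to identify, when $F$ is completely positive, the norm of $\omega_\tau^{\rm max}(F)$ with $\tau(F(1_A))$. Combined with \cref{thm:characterization of completely positive maps} for positivity, this will pin down exactly when $\omega_\tau^{\rm max}(F)$ is a state. Recall that a bounded linear functional $\varphi$ on a $C^*$-algebra is a state precisely when it is positive and $\|\varphi\|=1$; by \cref{thm:characterization of completely positive maps}, $\omega_\tau^{\rm max}(F)$ is positive if and only if $F$ is completely positive, so the equivalence reduces to showing that, for such $F$, $\|\omega_\tau^{\rm max}(F)\| = \tau(F(1_A))$.

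To carry out the norm computation, I would fix an approximate unit $(f_\beta)_\beta$ for $B$. Since $A$ is unital, the net $(1_A \otimes f_\beta^{\rm op})_\beta$ is then an approximate unit for $A \otimes_{\rm max} B^{\rm op}$, a standard fact valid for any $C^*$-tensor norm. Using that the norm of a positive linear functional equals the limit of its values on any approximate unit, one obtains
\begin{equation*}
    \|\omega_\tau^{\rm max}(F)\|
    = \lim_\beta \omega_\tau^{\rm max}(F)(1_A \otimes f_\beta^{\rm op})
    = \lim_\beta \tau(F(1_A)f_\beta)
    = \tau(F(1_A)),
\end{equation*}
where the first equality uses the defining formula \cref{eqn:omega_tau def}, and the last uses $F(1_A)f_\beta \to F(1_A)$ in norm together with continuity of $\tau$. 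Combining this with the previous paragraph yields the desired equivalence: $\omega_\tau^{\rm max}(F) \in S(A \otimes_{\rm max} B^{\rm op})$ if and only if $F$ is completely positive with $\tau(F(1_A))=1$, i.e.\ if and only if $F \in \TC_\tau(A,B)$.

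For the ``in particular'' claim, linearity of $\omega_\tau^{\rm max}$ from \cref{prop:properties of maps with cont associated functional}(ii) delivers affineness when restricted to the convex set $\TC_\tau(A,B)$ (cf.\ \cref{lemma:trace channels is a convex set}), while injectivity is exactly \cref{prop:properties of maps with cont associated functional}(iii), using faithfulness of $\tau$. Implicit here is that $\TC_\tau(A,B) \subseteq \calL_\tau^{\rm max}(A,B)$, which is immediate from \cref{prop:cp maps have cont associated functional}.

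The min case proceeds in parallel and is in fact slightly easier. Now $B$ is unital and $\tau$ is amenable, so $\omega_\tau^{\rm min}(F)$ is defined for CP maps by \cref{prop:cp maps have cont associated functional}, the algebra $A \otimes_{\rm min} B^{\rm op}$ is itself unital, and the norm of a positive functional on a unital $C^*$-algebra is simply its value on the unit; hence $\|\omega_\tau^{\rm min}(F)\| = \omega_\tau^{\rm min}(F)(1_A \otimes 1_B^{\rm op}) = \tau(F(1_A))$ directly, with no limit argument required. Positivity is handled by the amenable-trace half of \cref{thm:characterization of completely positive maps}, and the remaining bookkeeping transfers verbatim. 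The only genuinely delicate point in the whole argument is thus the norm identification in the possibly non-unital max case; once that is in hand, everything else is formal.
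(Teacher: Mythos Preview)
Your proof is correct and follows essentially the same approach as the paper: reduce positivity to \cref{thm:characterization of completely positive maps}, then identify $\|\omega_\tau^{\rm max}(F)\|$ with $\tau(F(1_A))$ via an approximate unit of $B$, and handle the affine embedding using \cref{prop:properties of maps with cont associated functional} and \cref{lemma:trace channels is a convex set}. The only (harmless) differences are that you are more explicit about the injectivity/affineness references and observe that the min case needs no approximate unit since $B$ is then unital.
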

\begin{proof}
    By \cref{thm:characterization of completely positive maps} we have that $\omega_\tau^{\rm max}(F)$ is positive if and only if $F$ is completely positive. In this case, we need only show that $\omega_\tau^{\rm max}(F)$ is a state if and only if $\tau(F(1_A)) = 1$. Let $(u_\lambda)_{\lambda \in \Lambda}$ be an approximate unit for $B$. Then $(u_\lambda^{\rm op})_{\lambda \in \Lambda}$ is an approximate unit for $B^{\rm op}$. Note that
    \begin{equation*}
        \|\omega_\tau^{\rm max}(F)\|
        = \lim\limits_{\lambda} (\omega_\tau^{\rm max}(F)(1_A \otimes u_\lambda^{\rm op}))
        = \lim\limits_{\lambda} (\tau(F(1_A)u_\lambda))
        = \tau(F(1_A)),
    \end{equation*}
    hence if $F$ is completely positive, we have that $\omega_\tau^{\rm max}(F)$ is a state if and only if $\tau(F(1_A)) = 1$. That $\omega_\tau^{\rm max}$ restricts to an affine embedding of $\TC_\tau(A,B)$ into $S(A \otimes_{\rm max} B^{\rm op})$ now follows by \cref{lemma:trace channels is a convex set}.

    The proof for the case when $B$ is unital and $\tau$ is amenable follows by exactly the same argument.
\end{proof}

\begin{definition}\label{def:adjointability}
    Let $A$ and $B$ be $C^*$-algebras with faithful traces $\tau_A$ and $\tau_B$, respectively. We say that a linear map $F \colon A \to B$ is \emph{$(\tau_A,\tau_B)$-adjointable} if there exists a linear map $F^\sharp \colon B \to A$ such that
    \begin{equation}\label{eqn:adjointability definition}
        \tau_B(F(a)b) = \tau_A(aF^\sharp(b))
    \end{equation}
    for every $a \in A$ and $b \in B$.
\end{definition}

\begin{remark}\label{rem:uniqueness-of-sharp}
    The requirement that $\tau_A$ and $\tau_B$ are faithful ensures that $F^\sharp$ is uniquely determined by $F$ through \eqref{eqn:adjointability definition}. Moreover, $F^\sharp$ is $(\tau_B,\tau_A)$-adjointable. Uniqueness of adjoints ensures that $(F^\sharp)^\sharp = F$. If $C$ is a $C^*$-algebra with a faithful trace $\tau_C$ and $G$ is $(\tau_B,\tau_C)$-adjointable, then $G \circ F$ is $(\tau_A, \tau_C)$-adjointable, and uniqueness of the adjoint also implies that
    \begin{equation}\label{eqn:adjoint of composition}
        (G \circ F)^\sharp = F^\sharp \circ G^\sharp.
    \end{equation}
\end{remark}

\begin{proposition}\label{prop:adjoint of trace channel}
    Let $A$ and $B$ be unital $C^*$-algebras with faithful traces $\tau_A$ and $\tau_B$, respectively. If $F \in \TC_{\tau_B}(A,B)$ is $(\tau_A,\tau_B)$-adjointable, then $F^\sharp \in \TC_{\tau_A}(B,A)$, and we furthermore have that
    \begin{equation}\label{eqn:omega_tau of adjoint}
        \omega_{\tau_A}(F^\sharp) = \omega_{\tau_B}(F) \circ \Sigma^{\rm op}.
    \end{equation}
\end{proposition}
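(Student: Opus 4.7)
The plan is to handle equation \eqref{eqn:omega_tau of adjoint} first by a direct computation on elementary tensors, and then use it to transport the properties of $F$ over to $F^\sharp$.

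First I would verify \eqref{eqn:omega_tau of adjoint} by evaluating both sides on a simple tensor $b \otimes a^{\rm op} \in B \odot A^{\rm op}$. On the right-hand side,
\begin{equation*}
    (\omega_{\tau_B}(F) \circ \Sigma^{\rm op})(b \otimes a^{\rm op}) = \omega_{\tau_B}(F)(a \otimes b^{\rm op}) = \tau_B(F(a)b),
\end{equation*}
while on the left-hand side $\omega_{\tau_A}(F^\sharp)(b \otimes a^{\rm op}) = \tau_A(F^\sharp(b)a)$. By the adjointability relation and the trace property of $\tau_A$,
\begin{equation*}
    \tau_B(F(a)b) = \tau_A(a F^\sharp(b)) = \tau_A(F^\sharp(b)a),
\end{equation*}
so the two linear functionals coincide on $B \odot A^{\rm op}$, proving \eqref{eqn:omega_tau of adjoint}.

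Next I would leverage this identity to show $F^\sharp \in \TC_{\tau_A}(B,A)$. Since $F \in \TC_{\tau_B}(A,B) \subseteq \calL_{\tau_B}^{\rm max}(A,B)$ by \cref{prop:cp maps have cont associated functional}, the functional $\omega_{\tau_B}(F)$ extends continuously to $A \otimes_{\rm max} B^{\rm op}$. Combining this with the continuity of $\Sigma^{\rm op}$ in the maximal norm (recorded after \eqref{eq:def-opposite-seminorm} in \cref{sec:tensor-norms-etc}), \eqref{eqn:omega_tau of adjoint} forces $\omega_{\tau_A}(F^\sharp)$ to extend continuously to $B \otimes_{\rm max} A^{\rm op}$, hence $F^\sharp \in \calL_{\tau_A}^{\rm max}(B,A)$ and
\begin{equation*}
    \omega_{\tau_A}^{\rm max}(F^\sharp) = \omega_{\tau_B}^{\rm max}(F) \circ \Sigma^{\rm op}.
\end{equation*}
By \cref{thm:maps with states as associated functionals}, $\omega_{\tau_B}^{\rm max}(F)$ is a state, and $\Sigma^{\rm op}$ is a positive linear map, so $\omega_{\tau_A}^{\rm max}(F^\sharp)$ is a positive linear functional on $B \otimes_{\rm max} A^{\rm op}$. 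Applying \cref{thm:characterization of completely positive maps} then yields that $F^\sharp$ is completely positive.

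Finally, to verify the normalization condition $\tau_A(F^\sharp(1_B)) = 1$, I would plug $a = 1_A$ and $b = 1_B$ into the adjointability relation: $\tau_B(F(1_A)) = \tau_A(F^\sharp(1_B))$, and this equals $1$ since $F \in \TC_{\tau_B}(A,B)$. This completes the verification that $F^\sharp \in \TC_{\tau_A}(B,A)$. There is no real obstacle here; the only point requiring a touch of care is ensuring continuity of $\omega_{\tau_A}(F^\sharp)$ in the maximal norm before invoking \cref{thm:characterization of completely positive maps}, and \eqref{eqn:omega_tau of adjoint} is precisely what delivers it.
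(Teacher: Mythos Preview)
Your proof is correct and follows essentially the same route as the paper's: verify \eqref{eqn:omega_tau of adjoint} on elementary tensors, use it together with the continuity of $\Sigma^{\rm op}$ to deduce $F^\sharp \in \calL_{\tau_A}^{\rm max}(B,A)$, then invoke positivity of $\omega_{\tau_B}^{\rm max}(F)$ and of $\Sigma^{\rm op}$ to conclude via \cref{thm:characterization of completely positive maps} that $F^\sharp$ is completely positive, and check the normalization separately. The only cosmetic differences are the order of steps (the paper does the normalization first) and that the paper cites \cref{thm:characterization of completely positive maps} rather than \cref{thm:maps with states as associated functionals} for positivity of $\omega_{\tau_B}^{\rm max}(F)$; also, your pointer ``after \eqref{eq:def-opposite-seminorm}'' is slightly off, as the continuity and positivity of $\Sigma^{\rm op}$ are recorded in \cref{sec:tensor-norms-etc} rather than in \cref{sec:lipschitz-seminorms-and-cqms}.
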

\begin{proof}
    First note that
    \begin{equation*}
        \tau_A(F^\sharp(1_B))
        = \tau_A(1_A F^\sharp(1_B))
        = \tau_B(F(1_A)1_B)
        = \tau_B(F(1_A))
        = 1.
    \end{equation*}
    Hence, it remains to show that $F^\sharp$ is completely positive. For $a \in A$ and $b \in B$ we have that
    \begin{align*}
        \omega_{\tau_A}(F^\sharp)(b \otimes a^{\rm op})
        & = \tau_A(F^\sharp(b)a) \\
        & = \tau_A(aF^\sharp(b)) \\
        & = \tau_B(F(a)b) \\
        & = \omega_{\tau_B}(F)(a \otimes b^{\rm op}) \\
        & = (\omega_{\tau_B}(F) \circ \Sigma^{\rm op})(b \otimes a^{\rm op}).
    \end{align*}
    Hence, the following diagram commutes:
    \begin{equation*}
        \xymatrixcolsep{40pt}
        \xymatrix{
            B \odot A^{\rm op} \ar[dr]_-{\omega_{\tau_A}(F^\sharp)} \ar[rr]^-{\Sigma^{\rm op}} & & A \odot B^{\rm op} \ar[dl]^-{\omega_{\tau_B}(F)} \\
            & \bbC &
        }
    \end{equation*}
    By \cref{prop:cp maps have cont associated functional} we have that $F \in \calL_{\tau_B}^{\rm max}(A,B)$. Hence, both $\Sigma^{\rm op}$ and $\omega_{\tau_B}(F)$ are continuous in the maximal norm, thus $\omega_{\tau_A}(F^\sharp)$ necessarily extends to a continuous map on the maximal tensor product. This shows that $F^\sharp \in \calL_{\tau_A}^{\rm max}(B,A)$.

    By \cref{thm:characterization of completely positive maps} we furthermore have that $\omega_{\tau_B}^{\rm max}(F)$ is positive. Recall that $\Sigma^{\rm op}$ is also positive, hence $\omega_{\tau_A}^{\rm max}(F^\sharp)$ is positive. By \cref{thm:characterization of completely positive maps} we get that $F^\sharp$ is completely positive. This shows that $F^\sharp \in \TC_{\tau_A}(B,A)$.
\end{proof}

\subsection{Relation to the Choi-Jamio\l{}kowski isomorphisms}\label{sec:relation-to-choi-jamiolkowski-isos}

\cref{thm:characterization of completely positive maps} can be used to recover some characterizations of completely positive maps that are known for matrix algebras and finite-dimensional $C^*$-algebras. We discuss some of these relationships
in this subsection.

In the case that a $C^*$-algebra $B$ admits a faithful trace $\tau$, we get a linear embedding of $B$ into its dual space $B^*$ by the assignment $b \mapsto \varphi_b := \tau(b \, \cdot \,)$. When $B$ is finite-dimensional, this is  a linear isomorphism, and the elements of
\begin{equation*}
    D_\tau(B) := \{ b \in B : b \text{ is positive and } \tau(b) = 1\}
\end{equation*}
correspond precisely to the states on $B$ under this isomorphism. In the special case $B = M_n(\bbC)$ and $\tau = \Tr_n$, the canonical trace on $M_n(\bbC)$, elements of $D_\tau(B)$ are called \emph{density matrices}.

\begin{example}\label{example:Choi matrix}
    Let $n, m \in \bbN$, $A = M_n(\bbC)$, $B= M_m(\bbC)$, and $\tau$ be the canonical trace on $B$. Let $F \colon A \to B$ be a linear map. We recall, see for instance \cite[Chapter 4]{ES}, that the \emph{Choi matrix} $C_F$ of $F$ is given by
    \begin{equation*}
        C_F := \sum_{i,j = 1}^n e_{i,j}^{(n)} \otimes F(e_{i,j}^{(n)}) \, \in \, M_n(\bbC) \otimes M_m(\bbC) \simeq M_{nm}(\bbC).
    \end{equation*}
    The assignment $F \mapsto C_F$ establishes an isomorphism between $\Hom(A,B)$ and $A \otimes B$. This isomorphism is called the \emph{Choi-Jamio\l{}kowski isomorphism}.

    Moreover, we also recall that the \emph{dual functional} $\widehat F$ on $A \otimes B = M_n(\bbC) \otimes M_m(\bbC)$ is the linear functional satisfying
    \begin{equation*}
        \widehat F(a\otimes b) = {\rm Tr}_m (F(a) b^t)
    \end{equation*}
    for all $a \in M_n(\bbC)$, $b \in M_m(\bbC)$, and that $\widehat F$ is positive if (and only if) $F$ is completely positive, in which case the positive density matrix corresponding to $\widehat F$ is the transpose of $C_F$, cf.~\cite[Lemma 4.2.3 and Theorem 4.2.7]{ES}. Now, let $t$ denote the $*$-isomorphism from $B^{\rm op} = M_m(\bbC)^{\rm op}$ onto $B = M_m(\bbC)$ given by $t(b^{\rm op}) = b^t$ for all $b \in B$. Then one readily verifies that
    \begin{equation*}
        \omega_\tau(F) = \widehat F \circ ({\rm id}_A \otimes t).
    \end{equation*}
    Thus, if $F$ is completely positive and we identify $A \otimes B^{\rm op}$ with $A \otimes B$ via the $*$-isomorphism $\id_A \otimes t$, we get that the density matrix corresponding to the positive functional $\omega_\tau(F)$ is the positive matrix $(C_F)^t$.
\end{example}

\medskip

\begin{example}\label{example:finite-dimensional Choi matrix}
    Let $B$ be a finite-dimensional $C^*$-algebra, and $\tau$ be a faithful trace on $B$ (the existence of such a trace is guaranteed by the structure theorem for finite-dimensional $C^*$-algebras). We get a Hilbert space $(B, \langle \cdot, \cdot \rangle_{\rm GNS})$ where
    \begin{equation*}
        \langle x, y \rangle_{\rm GNS} = \tau(x^*y).
    \end{equation*}
    For any linear
    map $F \colon B \to B$, one assigns an element $\tau^{\rm GNS}(F) \in B \otimes B^{\rm op}$ by choosing a basis $\{b_i\}_{i = 1}^{\dim B}$ for $B$ that is orthonormal with respect to $\langle \cdot, \cdot \rangle_{\rm GNS}$ and setting
    \begin{equation*}
        \tau^{\rm GNS}(F) = \sum_{i = 1}^{\dim B} F(b_i) \otimes (b_i^*)^{\rm op}.
    \end{equation*}
    By \cite[Proposition 3.7]{Wasilewski2024}, $F$ is completely positive if and only if $\tau^{\rm GNS}(F)$ is positive. The functional $\varphi_{\tau^{\rm GNS}(F)}$ evaluated on a basis element $b_m \otimes b_n^{\rm op}$ of $B \otimes B^{\rm op}$ is then given by
    \begin{align*}
        \varphi_{\tau^{\rm GNS}(F)}(b_m \otimes b_n^{\rm op})
        & = (\tau \otimes \tau^{\rm op})\left(\left(\sum_{i = 1}^{\dim B} F(b_i) \otimes (b_i^*)^{\rm op}\right)(b_m \otimes b_n^{\rm op})\right) \\
        & = \sum_{i = 1}^{\dim B} \tau(F(b_i)b_m)\tau(b_i^*b_n) \\
        & = \tau(F(b_n)b_m) \\
        & = \omega_\tau(F)(b_n \otimes b_m^{\rm op}).
    \end{align*}
    Hence, we get that $\omega_\tau(F)$ and $\tau^{\rm GNS}(F)$ are related via the following equality:
    \begin{equation*}
        \omega_\tau(F) = \varphi_{\tau^{\rm GNS}(F)} \circ \Sigma^{\rm op}.
    \end{equation*}
    In \cite{CourtneyGanesanWasilewski25} the element $\tau^{\rm GNS}(F)$ is denoted by $\Choi(F)$ and called the \emph{Choi matrix of $F$}.
\end{example}

\medskip

\begin{example}\label{example:Characterization of cp maps into matrix algebra}
    Let $A$ be a unital $C^*$-algebra, $m \in \bbN$, $B = M_m(\bbC)$, and $\tau$ be the canonical trace on $B$. For each linear map $F \colon A \to M_m(\bbC)$, let $\widehat F$ be the linear functional on $A \otimes M_m(\bbC) \simeq M_m(A)$ determined by
    \begin{equation*}
        \widehat F ( a \otimes e_{i,j}^{(m)}) = F(a)_{i, j} \quad \text{ for every $a\in A$ and $i,j \in \{1, \ldots , m\}$},
    \end{equation*}
    where $F(a)_{i, j}$ denotes the $(i,j)$-coefficient of the matrix $F(a)$. Then, as shown in \cite[Proposition 1.5.14]{BO}, the map $F$ is completely positive if and only if $\widehat F$ is positive, and the map $F \mapsto \widehat F$ gives a bijective correspondence between $\CP(A, M_m(\bbC))$ and the cone of all positive linear functionals on $A \otimes M_m(\bbC)$. Now, it is easy to verify that $\widehat F (a \otimes b) =  {\rm Tr}_m (F(a) b^t)$ for all $a \in A$ and $b\in M_m(\bbC)$. Thus, in analogy with \cref{example:Choi matrix}, $\widehat F$ might be called the dual functional of $F$ on $A\otimes M_m(\bbC)$. Moreover, setting $t(b^{\rm op}):= b^t$ for all $b \in M_m(\bbC)$, it is clear that we have  $\omega_\tau(F) = \widehat F \circ ({\rm id}_A \otimes t)$.
\end{example}

\begin{remark}
    In a von Neumann algebraic setting, there exist some infinite-dimensional versions of the Choi-Jamio\l{}kowski isomorphism in the literature. See for example \cite{Haap2021} and \cite{HKS2024} and references therein.
\end{remark}

\begin{remark}
    When $A$ and $B$ are von Neumann algebras equipped with faithful normal states $\mu$ and $\nu$, respectively,  Duvenhage and Snyman introduce in  \cite[Definition 2.10]{DuvenhageSnyman2018} a notion of a coupling $\omega$ of $(A, \mu)$ and $(B, \nu)$, which is a certain state on $A\odot B'$, where $B'$ denotes the commutant of $B$. Moreover, they associate to each linear map $E:A\to B$ a linear functional $\omega_E$ on $A\odot B'$ and show in \cite[Proposition 3.9]{DuvenhageSnyman2018} that $\omega_E$ is a coupling of $(A, \mu)$ and $(B, \nu)$ if and only if $E\in \UCP(A, B)$ and $\nu\circ E = \mu$. In the case where $\nu$ is tracial, it would be interesting to know if there is a connection between their result and our version of the Choi-Jamio\l{}kowski map, which we leave for future investigations.
\end{remark}

\section{Embedding-induced metrics}\label{sec:induced-metrics-between-cp-maps}

In light of the results of \cref{sec:tracial-Cstar-algebras-and-cp-maps}, we are able to define the embedding-induced metrics alluded to in \eqref{eqn:Delta metric imprecise}.

\begin{definition}\label{def:TC metric Delta}
    Let $A$ be a unital $C^*$-algebra and $B$ be a $C^*$-algebra with a faithful trace $\tau$. Let $L \colon A \otimes_{\rm max} B^{\rm op} \to [0,\infty]$ be a seminorm on $A \otimes_{\rm max} B^{\rm op}$. Define the extended metric $\Delta_{\tau, L}^{\rm max} \colon \TC_\tau(A,B) \times \TC_\tau(A,B) \to [0, \infty]$ by setting
    \begin{equation}\label{eqn:TC metric Delta max def}
        \Delta_{\tau, L}^{\rm max}(F,G) := \mk_L(\omega_\tau^{\rm max}(F),\omega_\tau^{\rm max}(G)).
    \end{equation}
    In the case where $B$ is unital, $\tau$ is amenable, and $L \colon A \otimes_{\rm min} B^{\rm op} \to [0,\infty]$ is a seminorm on $A \otimes_{\rm min} B^{\rm op}$, we define $\Delta_{\tau, L}^{\rm min} \colon \TC_\tau(A,B) \times \TC_\tau(A,B) \to [0, \infty]$ by setting
    \begin{equation}\label{eqn:TC metric Delta min def}
        \Delta_{\tau, L}^{\rm min}(F,G) := \mk_L(\omega_\tau^{\rm min}(F),\omega_\tau^{\rm min}(G)).
    \end{equation}
    An extended metric induced in this way will be called an \emph{embedding-induced extended metric from a seminorm}.
\end{definition}

We will determine some mild criteria under which we can guarantee that stability and chaining hold for these extended metrics. The main result for stability is \cref{thm:stability 1}, while the main results for chaining are \cref{thm:chaining 1}, and \cref{cor:chaining 2}. Note that, like with the pullback-induced metrics discussed in \cref{UCP}, it is generally not obvious when $\Delta_{\tau, L}^{\rm max}$ is a bounded metric on $\TC_\tau(A,B)$ for $C^*$-algebras $A$ and $B$. However, analogously to the situation in \cref{UCP}, $\Delta_{\tau, L}^{\rm max}$ is a bounded metric when $(A \otimes_{\rm max} B^{\rm op}, L)$ is a compact quantum metric space. Note that a necessary condition for this to be the case is that both $A$ and $B$ are unital. The corresponding statement holds true for $\Delta_{\tau, L}^{\rm min}$ and $(A \otimes_{\rm min} B^{\rm op}, L)$. 

\subsection{Stability}\label{sec:stability}

Since we are inducing metrics on the set of trace channels, we need to know that this set is stable under tensor products. This next result establishes this fact.

\begin{lemma}\label{lemma:tensor or trace channels}
    Let $A$ and $C$ be unital $C^*$-algebras, and $B$ and $D$ be $C^*$-algebras with faithful traces $\tau_B$ and $\tau_D$, respectively. Then for any $F \in \TC_{\tau_B}(A,B)$ and $G \in \TC_{\tau_D}(C,D)$ we have that
    \begin{align*}
        F \otimes G & \in \TC_{\tau_B \otimes \tau_D}(A \otimes_{\rm max} C, B \otimes_{\rm max} D), \text{ and } \\
        F \otimes G & \in \TC_{\tau_B \otimes \tau_D}(A \otimes_{\rm min} C, B \otimes_{\rm min} D).
    \end{align*}
\end{lemma}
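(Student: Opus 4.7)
The strategy is direct: verify the two conditions that define membership in $\TC_{\tau_B \otimes \tau_D}$, namely complete positivity and the trace-normalization condition on the unit, in each of the two tensor-product settings. Complete positivity is handed to us by \cref{lemma:Continuity of tensor product maps}: since $F$ and $G$ are completely positive, $F \odot G \colon A \odot C \to B \odot D$ extends to completely positive maps
\[
F \otimes G \colon A \otimes_{\rm max} C \to B \otimes_{\rm max} D \quad \text{and} \quad F \otimes G \colon A \otimes_{\rm min} C \to B \otimes_{\rm min} D.
\]
By density and continuity, both extensions satisfy $(F \otimes G)(1_A \otimes 1_C) = F(1_A) \otimes G(1_C)$.

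Next, I would produce the product trace $\tau_B \otimes \tau_D$. Normalizing (if needed) to tracial states, tensoring the GNS triples $(H_{\tau_B}, \pi_B, \xi_B)$ and $(H_{\tau_D}, \pi_D, \xi_D)$ yields a cyclic representation $\pi_B \otimes \pi_D$ of $B \otimes_{\rm min} D$ whose vector functional at $\xi_B \otimes \xi_D$ extends the algebraic product functional $\tau_B \otimes \tau_D$ continuously to $B \otimes_{\rm min} D$; traciality is clear on simple tensors and thus survives on $B \odot D$ by linearity and on $B \otimes_{\rm min} D$ by density and continuity. Rescaling back if we normalized, this gives the desired trace in the minimal case. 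For the maximal case, I would simply precompose with the canonical surjection $q_{\rm min} \colon B \otimes_{\rm max} D \to B \otimes_{\rm min} D$; traciality is inherited under this quotient.

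Finally, the trace-normalization condition is a one-line computation:
\[
(\tau_B \otimes \tau_D)\bigl((F \otimes G)(1_A \otimes 1_C)\bigr) = (\tau_B \otimes \tau_D)\bigl(F(1_A) \otimes G(1_C)\bigr) = \tau_B(F(1_A)) \cdot \tau_D(G(1_C)) = 1 \cdot 1 = 1,
\]
which establishes both claims. There is no substantive obstacle here; the only point requiring genuine care is the setup of the product trace on the minimal tensor product, which is standard but does rely on the GNS tensor-product construction rather than any universal property. Faithfulness of $\tau_B$ and $\tau_D$ is not used in the proof itself; it is the standing hypothesis of the section because it is needed elsewhere when invoking \cref{thm:maps with states as associated functionals}.
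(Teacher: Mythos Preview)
Your proof is correct and follows essentially the same route as the paper: invoke \cref{lemma:Continuity of tensor product maps} for complete positivity of $F\otimes G$ on both tensor products, then verify $(\tau_B\otimes\tau_D)((F\otimes G)(1_A\otimes 1_C))=1$ by the obvious factorization. The paper simply takes the existence of the product trace for granted, whereas you spell out its construction via GNS and the quotient map; this extra care is fine but not needed for the argument.
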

\begin{proof}
    By \cref{lemma:Continuity of tensor product maps} we have that $F \odot G$ extends to a completely positive map between both $A \otimes_{\rm max} C$ and $B \otimes_{\rm max} D$, and $A \otimes_{\rm min} C$ and $B \otimes_{\rm min} D$. Hence, it remains only to check that $(\tau_B \otimes \tau_D)((F \otimes G)(1_A \otimes 1_C)) = 1$ in both these cases. Indeed, we have
    \begin{equation*}
        (\tau_B \otimes \tau_D)((F \otimes G)(1_A \otimes 1_C))
        = \tau_B(F(1_A))\tau_D(G(1_C))
        = 1. \qedhere
    \end{equation*}
\end{proof}

The following lemma will be useful in the sequel.

\begin{lemma}\label{lemma:sufficient condition for stability}
    Let $A$ be a $C^*$-algebra, $B$ be a unital $C^*$-algebra, $\delta$ be a $C^*$-norm on $B \odot A$, $L$ be a seminorm on $A$, and $K$ be a seminorm on $B \otimes_{\delta} A$. Assume the following conditions are satisfied:
    \begin{enumerate}
        \item $1_B \otimes L \leq K$, and
        \item $K(1_B \otimes x) \leq 1$ for every $x \in A$ satisfying $L(x) \leq 1$.
    \end{enumerate}
    Then
    \begin{equation}\label{eqn:stability technical lemma}
        \mk_L(\varphi_1, \varphi_2) = \mk_{K}(\psi \otimes \varphi_1, \psi \otimes \varphi_2),
    \end{equation}
    for all $\varphi_1, \varphi_2 \in S(A)$ and $\psi \in S(B)$.
\end{lemma}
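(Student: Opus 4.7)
The plan is to establish \cref{eqn:stability technical lemma} by proving the two inequalities separately, exploiting the fact that both sides are suprema of $|\varphi_1(\,\cdot\,) - \varphi_2(\,\cdot\,)|$-type expressions over unit balls of seminorms. The two hypotheses on $K$ are exactly what is needed: condition (2) lets us pull elements of $A$ up into $B \otimes_\delta A$ via $a \mapsto 1_B \otimes a$ without enlarging seminorm, and condition (1) lets us push elements of $B \otimes_\delta A$ down to $A$ via the slice map $\psi \otimes \id_A$ without enlarging $L$.

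For the inequality $\mk_L(\varphi_1,\varphi_2) \leq \mk_K(\psi\otimes\varphi_1,\psi\otimes\varphi_2)$, I would start from an arbitrary $a \in A$ with $L(a)\leq 1$ and consider the element $z := 1_B \otimes a \in B \otimes_\delta A$. Hypothesis (2) gives $K(z)\leq 1$, while
\begin{equation*}
    (\psi\otimes\varphi_i)(1_B \otimes a) = \psi(1_B)\,\varphi_i(a) = \varphi_i(a) \qquad (i=1,2)
\end{equation*}
since $\psi$ is a state. Hence $|\varphi_1(a)-\varphi_2(a)| = |(\psi\otimes\varphi_1)(z)-(\psi\otimes\varphi_2)(z)| \leq \mk_K(\psi\otimes\varphi_1,\psi\otimes\varphi_2)$, and taking the supremum over $a$ yields the claim.

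For the reverse inequality, I would pick $z \in B\otimes_\delta A$ with $K(z)\leq 1$ and set $a := (\psi \otimes \id_A)(z) \in A$. A short computation on elementary tensors, extended by linearity and continuity (the product state $\psi\otimes\varphi_i$ is continuous on $B\otimes_\delta A$ since $\|\cdot\|_{\rm min} \leq \delta$), shows $(\psi\otimes\varphi_i)(z) = \varphi_i(a)$. By the interpretation of the notation $1_B\otimes L$ as in \eqref{eq:def-tensor-seminorms},
\begin{equation*}
    L(a) = L\bigl((\psi\otimes\id_A)(z)\bigr) \leq (1_B\otimes L)(z) \leq K(z) \leq 1,
\end{equation*}
so $a$ lies in the unit ball of $L$ and $|(\psi\otimes\varphi_1)(z) - (\psi\otimes\varphi_2)(z)| = |\varphi_1(a)-\varphi_2(a)| \leq \mk_L(\varphi_1,\varphi_2)$. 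Taking the supremum over $z$ closes the argument.

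The reasoning is short, so the main point requiring attention is simply the well-definedness of the slice map $\psi\otimes\id_A \colon B\otimes_\delta A \to A$ and of the product functional $\psi\otimes\varphi_i$ on $B\otimes_\delta A$ for an arbitrary $C^*$-norm $\delta$; both are handled by the domination $\|\cdot\|_{\rm min}\leq\delta$ together with complete positivity of states, so no substantive obstacle arises.
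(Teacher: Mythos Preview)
Your proof is correct and follows essentially the same approach as the paper: both directions are obtained by comparing the suprema defining $\mk_L$ and $\mk_K$, using condition~(2) together with the embedding $a\mapsto 1_B\otimes a$ for $\mk_L\leq\mk_K$, and condition~(1) together with the slice map $\psi\otimes\id_A$ for the reverse inequality. Your explicit remark on the well-definedness of the slice map and the product functional on $B\otimes_\delta A$ via $\|\cdot\|_{\rm min}\leq\delta$ is a point the paper leaves implicit.
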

\begin{proof}
    We compute that for any $\varphi_1, \varphi_2 \in S(A)$ and $\psi \in S(B)$,
    \begin{align*}
        \mk_{L}(\varphi_1, \varphi_2)
        & = \sup\{ |\varphi_1(x) - \varphi_2(x)| \colon x \in A, \ L(x) \leq 1 \} \\
        & = \sup\{ |(\psi \otimes (\varphi_1 - \varphi_2))(1_B \otimes x)| \colon x \in A, \ L(x) \leq 1 \} \\
        & \leq \sup\{ |(\psi \otimes (\varphi_1 - \varphi_2))(X)| \colon X \in B \otimes_{\delta} A, \ K(X) \leq 1 \} \\
        & = \mk_{K}(\psi \otimes \varphi_1, \psi \otimes \varphi_2),
    \end{align*}
    where the inequality follows by assumption (2). Next we compute that
    \begin{align*}
        \mk_{K}(\psi \otimes \varphi_1, \psi \otimes \varphi_2)
        & = \sup\{ |(\psi \otimes (\varphi_1 - \varphi_2))(X)| \colon X \in B \otimes_{\delta} A, \ K(X) \leq 1 \} \\
        & = \sup\{ |(\varphi_1 - \varphi_2)((\psi \otimes \id_A)(X))| \colon X \in B \otimes_{\delta} A, \ K(X) \leq 1 \} \\
        & \leq \sup\{ |\varphi_1(x) - \varphi_2(x)| \colon x \in A, \ L(x) \leq 1 \} \\
        & = \mk_{L}(\varphi_1, \varphi_2),
    \end{align*}
    where the inequality follows because for any $X \in B \otimes_{\delta} A$ we have that
    \begin{equation*}
        L((\psi \otimes \id_A)(X)) \leq (1_B \otimes L)(X) \leq K(X).
    \end{equation*}
    This completes the proof.
\end{proof}

We are now able to state the main theoretical result concerning stability for the extended metrics defined in \cref{def:TC metric Delta}. Note that to attain a sequence of extended metrics of the form $\{\Delta_{\Tr_n \otimes \tau, L_n}^{\rm max}\}_{n \in \bbN}$ we need to be given a sequence of seminorms $\{L_n\}_{n \in \bbN}$. In \cref{thm:stability 1} we impose  compatibility conditions between these seminorms to be able to conclude that $\{\Delta_{\Tr_n \otimes \tau, L_n}^{\rm max}\}_{n \in \bbN}$ is stable. Due to the general setting in which we work, we naturally have to impose some compatibility between the seminorms. However, we do not know if the constraints we have found are optimal, just that they are satisfied for a natural class of examples 
native to 
noncommutative geometry, cf.~\cref{thm:stability 2}.

Recall that $1_n^{\rm op}$ denotes the identity of $M_n(\bbC)^{\rm op}$.

\begin{theorem}\label{thm:stability 1}
    Let $A$ be a unital $C^*$-algebra, and $B$ be a $C^*$-algebra with a faithful trace $\tau$. For each $n \in \bbN$ suppose that we have a seminorm $L_n$ on $(M_n(\bbC) \otimes A) \otimes_{\rm max} (M_n(\bbC) \otimes B)^{\rm op}$ such that
    \begin{enumerate}
        \item $(1_{M_n(\bbC) \otimes M_n(\bbC)^{\rm op}} \otimes L_1) \circ \Sigma_{[23]} \leq L_n$, and
        \item $L_n(\Sigma_{[23]}(1_n \otimes 1_n^{\rm op} \otimes x)) \leq 1$ for every $x \in A \otimes_{\rm max} B^{\rm op}$ satisfying $L_1(x) \leq 1$.
    \end{enumerate}
    Then the sequence $\{\Delta_{\Tr_n \otimes \tau, L_n}^{\rm max}\}_{n \in \bbN}$ of extended metrics on $\TC_{\Tr_n \otimes \tau} (M_n (\bbC) \otimes A , M_n(\bbC) \otimes B)$ is stable.
\end{theorem}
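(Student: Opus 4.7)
The plan is to reduce the claim to \cref{lemma:sufficient condition for stability} by unfolding the definition of $\Delta^{\rm max}_{\Tr_n \otimes \tau, L_n}$ and applying the flip identity from \cref{lemma:flip of omega_tau of tensor prods}. For any $F, G \in \TC_\tau(A, B)$ and $H \in \{F, G\}$, equation \cref{eqn:flip of omega_tau max tensor alt}, combined with $\omega^{\rm max}_{\Tr_n}(\id_n) = \mu^{\rm max}_{\Tr_n}$, yields the decomposition
\[
\omega^{\rm max}_{\Tr_n \otimes \tau}(\id_n \otimes H) = (\Sigma_{[23]})^{*}\bigl(\mu^{\rm max}_{\Tr_n} \otimes \omega^{\rm max}_\tau(H)\bigr).
\]
Since $\Sigma_{[23]}$ is a $*$-isomorphism, pulling it through the Monge-Kantorovi\v{c} metric gives
\[
\Delta^{\rm max}_{\Tr_n\otimes\tau, L_n}(\id_n \otimes F, \id_n \otimes G) = \mk_{L_n \circ \Sigma_{[23]}}\bigl(\mu^{\rm max}_{\Tr_n} \otimes \omega^{\rm max}_\tau(F),\, \mu^{\rm max}_{\Tr_n} \otimes \omega^{\rm max}_\tau(G)\bigr).
\]

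The right-hand side is precisely of the form addressed by \cref{lemma:sufficient condition for stability}, to be applied with the substitutions
\[
A \rightsquigarrow A \otimes_{\rm max} B^{\rm op}, \quad B \rightsquigarrow M_n(\bbC) \otimes_{\rm max} M_n(\bbC)^{\rm op},
\]
$L \rightsquigarrow L_1$, $K \rightsquigarrow L_n \circ \Sigma_{[23]}$, $\psi \rightsquigarrow \mu^{\rm max}_{\Tr_n}$, and $\varphi_j \rightsquigarrow \omega^{\rm max}_\tau(H_j)$. The two hypotheses imposed on $L_n$ in the theorem are tailored so that, after the $\Sigma_{[23]}$ intertwining, they become exactly the two hypotheses of the lemma: condition (1) becomes $1_{M_n(\bbC) \otimes M_n(\bbC)^{\rm op}} \otimes L_1 \leq L_n \circ \Sigma_{[23]}$, while condition (2) bounds $(L_n \circ \Sigma_{[23]})(1_n \otimes 1_n^{\rm op} \otimes x)$ in terms of $L_1(x)$. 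Invoking the lemma collapses the right-hand side to $\mk_{L_1}(\omega^{\rm max}_\tau(F), \omega^{\rm max}_\tau(G)) = \Delta^{\rm max}_{\tau, L_1}(F, G)$, which is the required stability equality.

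The main obstacle I anticipate is a normalization subtlety: $\mu^{\rm max}_{\Tr_n}$ is a positive functional on $M_n(\bbC) \otimes_{\rm max} M_n(\bbC)^{\rm op}$ of total mass $n$, not a state, so \cref{lemma:sufficient condition for stability} does not apply verbatim. One must either slightly generalize the lemma to positive functionals $\psi$ of arbitrary mass, tracking a factor of $\psi(1)$ that cancels cleanly between the two chained inequalities in its proof, or else work with the normalized trace $\tfrac{1}{n}\Tr_n$ on the matrix factor so that the amplifications $\id_n \otimes F$ land in $\TC$ with compatible scaling. Once this bookkeeping is reconciled, the remainder is a direct diagram-chase assembled from the flip identity, the $*$-isomorphism $\Sigma_{[23]}$, and the abstract stability lemma.
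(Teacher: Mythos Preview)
Your proposal is essentially the paper's own proof: set $K_n := L_n \circ \Sigma_{[23]}$, use the flip identity \cref{eqn:flip of omega_tau max tensor alt} to rewrite $\omega_{\Tr_n \otimes \tau}^{\rm max}(\id_n \otimes H)$ as $(\Sigma_{[23]})^*(\omega_{\Tr_n}^{\rm max}(\id_n) \otimes \omega_\tau^{\rm max}(H))$, and then invoke \cref{lemma:sufficient condition for stability}. You also correctly flag a normalization subtlety that the paper passes over silently (it applies \cref{lemma:sufficient condition for stability} with $\psi = \omega_{\Tr_n}^{\rm max}(\id_n) = \mu_{\Tr_n}^{\rm max}$, which has mass $n$).

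One correction on that last point: your first proposed fix is wrong. Tracking $c := \psi(1_B)$ through the two chained inequalities in the proof of \cref{lemma:sufficient condition for stability} gives $c \cdot \mk_L \leq \mk_K$ from the first chain and $\mk_K \leq c \cdot \mk_L$ from the second (since $L((\psi \otimes \id_A)(X)) \leq c \cdot (1_B \otimes L)(X)$ when $\psi/c \in S(B)$). These combine to $\mk_K = c \cdot \mk_L$, not $\mk_K = \mk_L$, so with $c = n$ stability would fail by a factor of $n$. Your second fix is the right one: replace $\Tr_n$ by the normalized trace $\tfrac{1}{n}\Tr_n$ throughout, so that $\id_n \in \TC_{\tfrac{1}{n}\Tr_n}(M_n(\bbC), M_n(\bbC))$, the amplifications $\id_n \otimes F$ are genuine trace channels, $\mu_{\tfrac{1}{n}\Tr_n}^{\rm max}$ is a state, and \cref{lemma:sufficient condition for stability} applies verbatim.
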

\begin{proof}
    To set ourselves in a situation where we can use \cref{lemma:sufficient condition for stability} we define for each $n \geq 2$ a seminorm $K_n \colon M_n(\bbC) \otimes M_n(\bbC)^{\rm op} \otimes A \otimes_{\rm max} B^{\rm op} \to [0,\infty]$ by setting $K_n = L_n \circ \Sigma_{[23]}$. By conditions (1) and (2) in the statement of the theorem we get that
    \begin{enumerate}
        \item $1_{M_n(\bbC) \otimes M_n(\bbC)^{\rm op}}\otimes L_1 \leq K_n$, and
        \item $K_n(1_n \otimes 1_n^{\rm op} \otimes x) \leq 1$ for every $x \in A \otimes_{\rm max} B^{\rm op}$ satisfying $L_1(x) \leq 1$.
    \end{enumerate}
    Furthermore, we have that for any $\varphi, \psi \in S(M_n(\bbC) \otimes M_n(\bbC)^{\rm op} \otimes A \otimes_{\rm max} B^{\rm op})$, the following equality holds:
    \begin{equation}\label{eqn:stability theorem eqn}
        \mk_{K_n}(\varphi,\psi) = \mk_{L_n}(\Sigma_{[23]}^*\varphi, \Sigma_{[23]}^*\psi).
    \end{equation}

    Now, let $F,G \in \TC_{\tau}(A,B)$, and $n \in \bbN$. Then
    \begin{align*}
        & \Delta_{\Tr_n \otimes \tau, L_n}^{\rm max}(\id_n \otimes F, \id_n \otimes G) & \\
        & = \mk_{L_n}(\omega_{\Tr_n \otimes \tau}^{\rm max}(\id_n \otimes F), \omega_{\Tr_n \otimes \tau}^{\rm max}(\id_n \otimes G)) & \\
        & = \mk_{L_n}(\Sigma_{[23]}^*(\omega_{\Tr_n}^{\rm max}(\id_n) \otimes \omega_{\tau}^{\rm max}(F)), \Sigma_{[23]}^*(\omega_{\Tr_n}^{\rm max}(\id_n) \otimes \omega_{\tau}^{\rm max}(G))) & \text{by \eqref{eqn:flip of omega_tau max tensor alt},} \\
        & = \mk_{K_n}(\omega_{\Tr_n}^{\rm max}(\id_n) \otimes \omega_{\tau}^{\rm max}(F), \omega_{\Tr_n}^{\rm max}(\id_n) \otimes \omega_{\tau}^{\rm max}(G)) & \text{by \eqref{eqn:stability theorem eqn},} \\
        & = \mk_{L_1}(\omega_{\tau}^{\rm max}(F), \omega_{\tau}^{\rm max}(G)) & \text{by \cref{lemma:sufficient condition for stability},} \\
        & = \Delta_{\tau, L_1}^{\rm max}(F,G), &
    \end{align*}
    which is what we wanted to show.
\end{proof}

\begin{remark}\label{rem:stability for spatial norm}
    If $B$ is a unital $C^*$-algebra with an amenable faithful trace $\tau$, then the above result and proof go through if we replace the maximal tensor product with the minimal tensor product everywhere.
\end{remark}

\subsection{Chaining}\label{sec:chaining}

As mentioned at the beginning of this section, we want to talk about chaining for trace channels, but it is not always the case that the composition of two trace channels is again a trace channel. We can however establish that the composition of two trace channels is again a trace channel in the following cases. If $B$ and $C$ are $C^*$-algebras having faithful traces $\tau_B$ and $\tau_C$, respectively, we set
\begin{equation*}
    \QC(B, C, \tau_B, \tau_C):= \{ F\in \CP(B,C): \tau_B=\tau_C\circ F\},
\end{equation*}
that is $\QC(B, C, \tau_B, \tau_C)$ is the set of completely positive trace-preserving maps. 

\begin{lemma}\label{lemma:composition of trace channels}
    Let $A$, $B$, and $C$ be $C^*$-algebras, where $A$ and $B$ are unital, and $B$ and $C$ have faithful
    traces $\tau_B$ and $\tau_C$, respectively. Then the composition of linear maps restricts to maps
    \begin{equation*}
        \xymatrixcolsep{50pt}
        \xymatrixrowsep{10pt}
        \xymatrix{
            \UCP(A,B) \times \TC_{\tau_C}(B,C) \ar[r] & \TC_{\tau_C}(A,C), \\
            \TC_{\tau_B}(A,B) \times \QC(B,C, \tau_B, \tau_C) \ar[r] & \TC_{\tau_C}(A,C).
        }
    \end{equation*}
\end{lemma}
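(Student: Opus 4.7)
The plan is to verify both containments directly from the definitions; there is essentially no hidden content, only a careful unpacking of what it means to be a trace channel, a unital completely positive map, and a trace-preserving completely positive map.

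First I would observe that in both cases the composition $G \circ F$ is automatically completely positive, since the composition of two completely positive maps is completely positive. Thus, in order to place $G \circ F$ in $\TC_{\tau_C}(A,C)$, the only remaining condition to check is that $\tau_C((G\circ F)(1_A)) = 1$.

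For the first case, given $F \in \UCP(A,B)$ and $G \in \TC_{\tau_C}(B,C)$, I would simply use the unitality of $F$, namely $F(1_A) = 1_B$, together with the defining property of trace channels, to compute
\begin{equation*}
    \tau_C((G\circ F)(1_A)) = \tau_C(G(F(1_A))) = \tau_C(G(1_B)) = 1.
\end{equation*}
For the second case, given $F \in \TC_{\tau_B}(A,B)$ and $G \in \QC(B,C,\tau_B,\tau_C)$, I would apply the trace-preserving property $\tau_C \circ G = \tau_B$ and then the fact that $F$ is a trace channel to conclude
\begin{equation*}
    \tau_C((G\circ F)(1_A)) = (\tau_C \circ G)(F(1_A)) = \tau_B(F(1_A)) = 1.
\end{equation*}

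There is no substantive obstacle here; the statement is essentially a bookkeeping lemma that records why the two natural ways to preserve the normalization condition defining trace channels are to insist either on unitality of the first factor or on trace-preservation of the second factor. The role of the lemma is conceptual rather than technical: it isolates the correct classes of maps to combine with trace channels when formulating chaining, which is what the remainder of \cref{sec:chaining} will need.
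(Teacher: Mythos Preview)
Your proof is correct and follows essentially the same argument as the paper: note that the composition of completely positive maps is completely positive, then verify the normalization $\tau_C((G\circ F)(1_A))=1$ in each case using, respectively, unitality of $F$ and the trace-preserving property of $G$.
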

\begin{proof}
    Suppose first that $F \in \UCP(A,B)$ and that $G \in \TC_{\tau_C}(B,C)$. It is clear that $G \circ F$ is completely positive, and we compute that
    \begin{equation*}
        \tau_C(G(F(1_A)))
        = \tau_C(G(1_B))
        = 1.
    \end{equation*}
    Hence, $G \circ F \in \TC_{\tau_C}(A,C)$.

    Suppose now that $F \in \TC_{\tau_B}(A,B)$ and that $G \in \QC(B,C, \tau_B, \tau_C)$. Then we compute that
    \begin{equation*}
        \tau_C(G(F(1_A)))
        = \tau_B(F(1_A))
        = 1.
    \end{equation*}
    This completes the proof.
\end{proof}

With this result in mind, we say that a pair $(F,G)$ is \emph{$\TC$-composable} if
\begin{align*}
    (F,G) & \in \UCP(A,B) \times \TC_{\tau_C}(B,C), \text{ or } \\
    (F,G) & \in \TC_{\tau_B}(A,B) \times \QC(B,C,  \tau_B, \tau_C).
\end{align*}

\begin{remark}
    Note that $\UCP(A,B)$ is a subset of $\TC_{\tau_B}(A,B)$ if (and only if) $\tau_B$ is a tracial \emph{state}. Also, $\QC(B, C, \tau_B, \tau_C)$ is a subset of $\TC_{\tau_C}(B,C)$ whenever $\tau_B$ is a tracial state. This will only impact the wording of \cref{thm:chaining 1} and \cref{cor:chaining 2}, where all traces will be states.
\end{remark}

Before we can state the main results of this section, we are going to need some technical results. This first result tells us how the map $\omega_\tau$ interacts with compositions, the proof of which we choose to omit as it follows by unwrapping definitions and a short computation.

\begin{lemma}\label{lemma:associated functional of composition}
    Let $A$, $B$, and $C$ be $C^*$-algebras, where $C$ has a faithful trace $\tau$. If $F \colon A \to B$ and $G \colon B \to C$ are linear, then
    \begin{equation*}
        \omega_{\tau}(G \circ F) = \omega_{\tau}(G) \circ (F \odot \id_{C^{\rm op}})
    \end{equation*}
    as functionals on $A \odot C^{\rm op}$.
\end{lemma}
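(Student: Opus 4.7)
The plan is to verify the identity on elementary tensors $a \otimes c^{\rm op}$ with $a \in A$ and $c \in C$, and then extend by linearity. Since both sides of the claimed equality are linear functionals on $A \odot C^{\rm op}$ and the elementary tensors span this algebraic tensor product, agreement on such tensors is enough by the universal property of the algebraic tensor product.

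For the computation on elementary tensors, I would unwind the definitions: on the left-hand side, using \cref{eqn:omega_tau def} applied to the map $G \circ F$, one has
\[
    \omega_\tau(G \circ F)(a \otimes c^{\rm op}) = \tau\bigl((G \circ F)(a)\, c\bigr) = \tau\bigl(G(F(a))\, c\bigr).
\]
On the right-hand side, $(F \odot \id_{C^{\rm op}})(a \otimes c^{\rm op}) = F(a) \otimes c^{\rm op}$, so applying $\omega_\tau(G)$ and again invoking \cref{eqn:omega_tau def} for $G$ gives
\[
    \bigl(\omega_\tau(G) \circ (F \odot \id_{C^{\rm op}})\bigr)(a \otimes c^{\rm op}) = \omega_\tau(G)(F(a) \otimes c^{\rm op}) = \tau(G(F(a))\, c).
\]
The two expressions coincide, so the two linear functionals agree on a spanning set.

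Since no continuity or norm considerations are involved at this stage (both sides are purely algebraic functionals on $A \odot C^{\rm op}$), there is no genuine obstacle beyond bookkeeping; the lemma is really just the statement that $\omega_\tau$ is natural with respect to post-composition of linear maps, phrased more symmetrically via \cref{rem:omega_tau as composition with mu_tau}. From that remark, $\omega_\tau(G \circ F)$ factors as $\mu_\tau \circ ((G \circ F) \odot \id_{C^{\rm op}}) = \mu_\tau \circ (G \odot \id_{C^{\rm op}}) \circ (F \odot \id_{C^{\rm op}}) = \omega_\tau(G) \circ (F \odot \id_{C^{\rm op}})$, which gives an alternative one-line verification that I would include as a remark after the elementary-tensor computation.
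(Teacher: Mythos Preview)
Your proof is correct and follows exactly the same approach as the paper: verify the identity on elementary tensors $a \otimes c^{\rm op}$ by unwinding the definition of $\omega_\tau$, which is precisely what the paper does. Your additional remark invoking \cref{rem:omega_tau as composition with mu_tau} is a nice alternative formulation but not present in the paper's proof.
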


If $L$ is a seminorm on $A \otimes_{\rm max} B^{\rm op}$, then we get a seminorm $L^{\rm op}$ on $B \otimes_{\rm max} A^{\rm op}$ by setting $L^{\rm op} = L \circ \Sigma^{\rm op}$. The following result says that this is the ``correct'' seminorm to consider if we want the distance between the adjoints of two channels to be the same as the distance between the original channels.

\begin{lemma}\label{lemma:distance between adjoints}
    Let $A$ and $B$ be unital $C^*$-algebras with faithful traces $\tau_A$ and $\tau_B$ respectively. Let $L$ be a seminorm on $A \otimes_{\rm max} B^{\rm op}$. Suppose that $F,G \in \TC_{\tau_B}(A,B)$ are two $(\tau_A,\tau_B)$-adjointable maps. Then
    \begin{equation*}
        \Delta_{\tau_B,L}^{\rm max}(F,G) = \Delta_{\tau_A,L^{\rm op}}^{\rm max}(F^\sharp,G^\sharp).
    \end{equation*}
    If we furthermore have that $\tau_A$ and $\tau_B$ are amenable traces, and $L$ is a seminorm on $A \otimes_{\rm min} B^{\rm op}$, then
    \begin{equation*}
        \Delta_{\tau_B,L}^{\rm min}(F,G) = \Delta_{\tau_A,L^{\rm op}}^{\rm min}(F^\sharp,G^\sharp).
    \end{equation*}
\end{lemma}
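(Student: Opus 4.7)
The plan is to derive the equality directly from the transposition identity established in \cref{prop:adjoint of trace channel}, namely $\omega_{\tau_A}(F^\sharp) = \omega_{\tau_B}(F) \circ \Sigma^{\rm op}$ (and similarly for $G^\sharp$). Since $L^{\rm op}$ is defined as $L \circ \Sigma^{\rm op}$, the proof is essentially a change of variable in the supremum defining the Monge-Kantorovi\v{c} metric, with $\Sigma^{\rm op}$ providing the bijection.

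First I would observe that the transposition identity from \cref{prop:adjoint of trace channel}, which is stated on $B \odot A^{\rm op}$, extends by density and continuity to the corresponding statement on the maximal tensor product: both $\omega_{\tau_A}^{\rm max}(F^\sharp)$ and $\omega_{\tau_B}^{\rm max}(F) \circ \Sigma^{\rm op}$ are continuous functionals on $B \otimes_{\rm max} A^{\rm op}$ (using \cref{prop:cp maps have cont associated functional} to guarantee that $\omega_{\tau_B}^{\rm max}(F)$ exists, and the fact that $\Sigma^{\rm op}$ is continuous in every $C^*$-norm) which agree on the dense subalgebra, so
\[
\omega_{\tau_A}^{\rm max}(F^\sharp) = \omega_{\tau_B}^{\rm max}(F) \circ \Sigma^{\rm op},
\]
and identically for $G$.

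Next, I would unfold the definition of the Monge-Kantorovi\v{c} metric and apply this identity:
\begin{align*}
\Delta_{\tau_A, L^{\rm op}}^{\rm max}(F^\sharp, G^\sharp)
&= \sup\bigl\{ \bigl| (\omega_{\tau_A}^{\rm max}(F^\sharp) - \omega_{\tau_A}^{\rm max}(G^\sharp))(y) \bigr| : y \in B \otimes_{\rm max} A^{\rm op},\, L^{\rm op}(y) \leq 1 \bigr\} \\
&= \sup\bigl\{ \bigl| (\omega_{\tau_B}^{\rm max}(F) - \omega_{\tau_B}^{\rm max}(G))(\Sigma^{\rm op}(y)) \bigr| : y \in B \otimes_{\rm max} A^{\rm op},\, L(\Sigma^{\rm op}(y)) \leq 1 \bigr\}.
\end{align*}
Then I would perform the change of variable $x = \Sigma^{\rm op}(y)$. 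Since $\Sigma^{\rm op}$ extends to a (positive) linear bijection $B \otimes_{\rm max} A^{\rm op} \to A \otimes_{\rm max} B^{\rm op}$ (its inverse being the $\Sigma^{\rm op}$ in the opposite direction, as noted in \cref{sec:tensor-norms-etc}), the substitution is a bijection of the indexing set, and both the condition $L(\Sigma^{\rm op}(y)) \leq 1$ and the integrand transform cleanly, yielding
\[
\Delta_{\tau_A, L^{\rm op}}^{\rm max}(F^\sharp, G^\sharp) = \sup\bigl\{ \bigl| (\omega_{\tau_B}^{\rm max}(F) - \omega_{\tau_B}^{\rm max}(G))(x) \bigr| : x \in A \otimes_{\rm max} B^{\rm op},\, L(x) \leq 1 \bigr\} = \Delta_{\tau_B, L}^{\rm max}(F, G).
\]

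For the minimal-norm statement the argument is identical once the amenability hypothesis is invoked: by \cref{prop:cp maps have cont associated functional}, amenability of $\tau_B$ ensures $\omega_{\tau_B}^{\rm min}(F), \omega_{\tau_B}^{\rm min}(G)$ exist on $A \otimes_{\rm min} B^{\rm op}$, and amenability of $\tau_A$ ensures (using \cref{prop:adjoint of trace channel} together with \cref{prop:cp maps have cont associated functional}) that $\omega_{\tau_A}^{\rm min}(F^\sharp), \omega_{\tau_A}^{\rm min}(G^\sharp)$ exist on $B \otimes_{\rm min} A^{\rm op}$. Since $\Sigma^{\rm op}$ is continuous in every $C^*$-norm, in particular it extends to a bijection between the minimal tensor products, and the identical change-of-variable argument concludes. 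The whole proof is really a bookkeeping exercise; there is no substantive obstacle, only the minor technical point of checking that the identity from \cref{prop:adjoint of trace channel} survives the passage to the maximal (resp.~minimal) tensor product, which is handled by density.
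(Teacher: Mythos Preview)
Your proof is correct and follows essentially the same approach as the paper: both extend the transposition identity $\omega_{\tau_A}(F^\sharp)=\omega_{\tau_B}(F)\circ\Sigma^{\rm op}$ from \cref{prop:adjoint of trace channel} to the maximal (resp.\ minimal) tensor product by density, and then use that $L^{\rm op}=L\circ\Sigma^{\rm op}$ together with the bijectivity of $\Sigma^{\rm op}$ to match the two Monge--Kantorovi\v{c} suprema. The only difference is cosmetic: you spell out the change of variable in the supremum explicitly, whereas the paper records the same step more tersely as $\mk_L(\varphi\circ\Sigma^{\rm op},\psi\circ\Sigma^{\rm op})=\mk_{L^{\rm op}}(\varphi,\psi)$.
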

\begin{proof}
    Recall that $\TC_{\tau_B}(A,B) \subset \calL_{\tau_B}^{\rm max}(A,B)$ by \cref{prop:cp maps have cont associated functional}, and if $F \in \TC_{\tau_B}(A,B)$ is $(\tau_A,\tau_B)$-adjointable, then $F^\sharp \in \TC_{\tau_A}(B,A)$ by \cref{prop:adjoint of trace channel}. Hence, both
    \begin{align*}
        \Delta_{\tau_B,L}^{\rm max}(F,G) & \quad \text{ and }  \quad
        \Delta_{\tau_A,L^{\rm op}}^{\rm max}(F^\sharp,G^\sharp)
    \end{align*}
    are well-defined quantities.

    By \eqref{eqn:omega_tau of adjoint} and continuity of $\omega_{\tau_B}(F)$ and $\omega_{\tau_A}(F^\sharp)$ in the respective maximal norms, we get that
    \begin{equation*}
        \omega_{\tau_B}^{\rm max}(F) = \omega_{\tau_A}^{\rm max}(F^\sharp) \circ \Sigma^{\rm op}
    \end{equation*}
    for all $x \in A \otimes_{\rm max} B^{\rm op}$. The same equality holds with $G$ in place of $F$ as well. We then compute that
    \begin{align*}
        & \Delta_{\tau_B,L}^{\rm max}(F,G) & \\
        & = \mk_{L}(\omega_{\tau_B}^{\rm max}(F),\omega_{\tau_B}^{\rm max}(G)) & \\
        & = \mk_{L}(\omega_{\tau_B}^{\rm max}(F^\sharp)\circ\Sigma^{\rm op},\omega_{\tau_B}^{\rm max}(G^\sharp)\circ\Sigma^{\rm op}) & \\
        & = \mk_{L^{\rm op}}(\omega_{\tau_A}^{\rm max}(F^\sharp),\omega_{\tau_A}^{\rm max}(G^\sharp)) & \\
        & = \Delta_{\tau_A,L^{\rm op}}^{\rm max}(F^\sharp,G^\sharp). &
    \end{align*}
    In the case where $\tau_A$ and $\tau_B$ are amenable traces, the same argument given above works mutatis mutandis.
\end{proof}

\begin{remark}\label{rem:sLip-norms and adjoints}
    It follows from \cref{sec:lipschitz-seminorms-and-cqms} that if $L_A$ and $L_B$ are seminorms on $A$ and $B$, respectively, then we have
    \begin{equation}\label{eqn:swap of tensor seminorms}
        (L_{A \otimes_{\rm max} B^{\rm op}})^{\rm op} = L_{B \otimes_{\rm max} A^{\rm op}}.
    \end{equation}
\end{remark}

The following theorem and \cref{cor:chaining 2} are the main results of this section. Especially \cref{thm:chaining 1} is needed to establish that chaining holds for the class of unital completely positive maps between group $C^*$-algebras that arise from positive definite functions, cf.~\cref{thm:chaining 3}. As mentioned in \cref{sec:discussion-inducing-metrics} it is generally too strict to hope that chaining holds in complete generality. Due to the general setting in which we work, some compatibility conditions between the trace channels and the seminorms necessarily have to be imposed. In \cref{thm:chaining 3} we show that these conditions are satisfied by a well-studied class of completely positive maps on twisted group $C^*$-algebras.

\begin{theorem}\label{thm:chaining 1}
    Let $A$, $B$, and $C$ be unital $C^*$-algebras with faithful tracial states $\tau_A$, $\tau_B$, and $\tau_C$ respectively. Let $L$, $K$, and $M$ be seminorms on $A \otimes_{\rm max} B^{\rm op}$, $B \otimes_{\rm max} C^{\rm op}$, and $A \otimes_{\rm max} C^{\rm op}$ respectively. Suppose that
    \begin{enumerate}
        \item $F_1,F_2 \in \TC_{\tau_B}(A,B)$ are $(\tau_A,\tau_B)$-adjointable,
        \item $G_1,G_2 \in \TC_{\tau_C}(B,C)$ are $(\tau_B,\tau_C)$-adjointable,
        \item $(F_1,G_1)$, $(F_2,G_2)$, and $(F_1,G_2)$ are $\TC$-composable,
        \item $K \circ (F_1 \otimes \id_{C^{\rm op}}) \leq M$, and that
        \item $L^{\rm op} \circ (G_2^\sharp \otimes \id_{A^{\rm op}})  \leq M^{\rm op}$.
    \end{enumerate}
    Then
    \begin{equation*}
        \Delta_{\tau_C,M}^{\rm max}(G_1 \circ F_1, G_2 \circ F_2)
        \leq \Delta_{\tau_C,K}^{\rm max}(G_1, G_2) + \Delta_{\tau_B,L}^{\rm max}(F_1, F_2).
    \end{equation*}
\end{theorem}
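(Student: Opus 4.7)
The strategy is standard chaining via an intermediate channel. The natural choice is $G_2 \circ F_1$: by composability of $(F_1,G_2)$ from hypothesis (3) and \cref{lemma:composition of trace channels}, this is a trace channel in $\TC_{\tau_C}(A,C)$, and likewise $G_1\circ F_1$ and $G_2 \circ F_2$ are trace channels. The triangle inequality for $\Delta_{\tau_C,M}^{\rm max}$ then reduces the statement to proving the two one-sided bounds
\[
\Delta_{\tau_C,M}^{\rm max}(G_1 \circ F_1,\, G_2 \circ F_1) \leq \Delta_{\tau_C,K}^{\rm max}(G_1,G_2)
\quad\text{and}\quad
\Delta_{\tau_C,M}^{\rm max}(G_2 \circ F_1,\, G_2 \circ F_2) \leq \Delta_{\tau_B,L}^{\rm max}(F_1,F_2).
\]

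\textbf{Bounding the first summand.} Here the map that varies sits on the \emph{left} of the composition, exactly the position \cref{lemma:associated functional of composition} was designed for. Extending that identity continuously to the maximal tensor product yields
\[
\omega_{\tau_C}^{\rm max}(G_i \circ F_1) = \omega_{\tau_C}^{\rm max}(G_i) \circ (F_1 \otimes \id_{C^{\rm op}}), \qquad i = 1,2.
\]
Substituting into the definition of $\mk_M$ and using hypothesis (4), which ensures that the transformation $x \mapsto (F_1 \otimes \id_{C^{\rm op}})(x)$ carries the unit $M$-ball into the unit $K$-ball, immediately gives the desired bound by $\Delta_{\tau_C,K}^{\rm max}(G_1,G_2)$.

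\textbf{Bounding the second summand.} Here the varying map sits on the \emph{right}, which is precisely where \cref{lemma:associated functional of composition} does not help. My workaround is to pass to adjoints: since adjointability is closed under composition with $(G_2 \circ F_i)^\sharp = F_i^\sharp \circ G_2^\sharp$, \cref{lemma:distance between adjoints} gives
\[
\Delta_{\tau_C,M}^{\rm max}(G_2 \circ F_1,\, G_2 \circ F_2) = \Delta_{\tau_A,M^{\rm op}}^{\rm max}(F_1^\sharp \circ G_2^\sharp,\, F_2^\sharp \circ G_2^\sharp).
\]
After this swap the variation is again on the left; \cref{lemma:associated functional of composition} now yields $\omega_{\tau_A}^{\rm max}(F_i^\sharp \circ G_2^\sharp) = \omega_{\tau_A}^{\rm max}(F_i^\sharp) \circ (G_2^\sharp \otimes \id_{A^{\rm op}})$, hypothesis (5) carries the unit $M^{\rm op}$-ball into the unit $L^{\rm op}$-ball, and a second application of \cref{lemma:distance between adjoints} converts the resulting bound $\Delta_{\tau_A,L^{\rm op}}^{\rm max}(F_1^\sharp, F_2^\sharp)$ back to $\Delta_{\tau_B,L}^{\rm max}(F_1,F_2)$.

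\textbf{Main obstacle.} There is no deep technical difficulty; the only conceptual subtlety is the asymmetry of \cref{lemma:associated functional of composition}, which produces a pullback along the right tensor factor only. This asymmetry is exactly what forces the detour through adjoints in the second summand, and it is the reason both hypotheses (4) and (5) appear, each controlling one side of the composition. A minor bookkeeping point is verifying throughout that the functionals remain states (so that the various $\Delta^{\rm max}$ expressions are defined on $\TC$), which is ensured by \cref{prop:cp maps have cont associated functional}, \cref{prop:adjoint of trace channel}, and \cref{lemma:composition of trace channels}.
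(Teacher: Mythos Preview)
Your proposal is correct and follows essentially the same approach as the paper: triangle inequality through the intermediate $G_2\circ F_1$, \cref{lemma:associated functional of composition} together with hypothesis~(4) for the first summand, and the adjoint detour via \cref{lemma:distance between adjoints} combined with hypothesis~(5) for the second. Your identification of the asymmetry in \cref{lemma:associated functional of composition} as the reason for the adjoint trick is exactly the point.
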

\begin{proof}
    Note that by \cref{prop:cp maps have cont associated functional} and \cref{prop:adjoint of trace channel}, all of the quantities we will be considering here are well-defined. By the triangle inequality we have that
    \begin{equation*}
        \Delta_{\tau_C,M}^{\rm max}(G_1 \circ F_1, G_2 \circ F_2)
        \leq \Delta_{\tau_C,M}^{\rm max}(G_1 \circ F_1, G_2 \circ F_1) + \Delta_{\tau_C,M}^{\rm max}(G_2 \circ F_1, G_2 \circ F_2).
    \end{equation*}
    We compute that the first of these terms is bounded as follows:
    \begin{align*}
        & \Delta_{\tau_C,M}^{\rm max}(G_1 \circ F_1, G_2 \circ F_1) & \\
        & = \mk_{M}(\omega_{\tau_C}^{\rm max}(G_1 \circ F_1), \omega_{\tau_C}^{\rm max}(G_2 \circ F_1)) & \\
        & = \mk_{M}((F_1 \otimes \id_{C^{\rm op}})^*\omega_{\tau_C}^{\rm max}(G_1), (F_1 \otimes \id_{C^{\rm op}})^*\omega_{\tau_C}^{\rm max}(G_2)) & \text{by \cref{lemma:associated functional of composition}} \\
        & \leq \mk_{K}(\omega_{\tau_C}^{\rm max}(G_1), \omega_{\tau_C}^{\rm max}(G_2)) & \text{since } K \circ (F_1 \otimes \id_{C^{\rm op}}) \leq M \\
        & = \Delta_{\tau_C,K}^{\rm max}(G_1, G_2). &
    \end{align*}
    The second term is bounded by the following computation:
    \begin{align*}
        & \Delta_{\tau_C,M}^{\rm max}(G_2 \circ F_1, G_2 \circ F_2) & \\
        & = \mk_{M}(\omega_{\tau_C}^{\rm max}(G_2 \circ F_1), \omega_{\tau_C}^{\rm max}(G_2 \circ F_2)) & \\
        & = \mk_{M^{\rm op}}(\omega_{\tau_A}^{\rm max}(F_1^\sharp \circ G_2^\sharp), \omega_{\tau_A}^{\rm max}(F_2^\sharp \circ G_2^\sharp)) & \text{by \cref{lemma:distance between adjoints}} \\
        & = \mk_{M^{\rm op}}((G_2^\sharp \otimes \id_{A^{\rm op}})^*\omega_{\tau_A}^{\rm max}(F_1^\sharp), (G_2^\sharp \otimes \id_{A^{\rm op}})^*\omega_{\tau_A}^{\rm max}(F_2^\sharp)) & \text{by \cref{lemma:associated functional of composition}} \\
        & \leq \mk_{L^{\rm op}}(\omega_{\tau_A}^{\rm max}(F_1^\sharp), \omega_{\tau_A}^{\rm max}(F_2^\sharp)) & \text{since } L^{\rm op} \circ (G_2^\sharp \otimes \id_{A^{\rm op}}) \leq M^{\rm op} \\
        & = \mk_{L}(\omega_{\tau_B}^{\rm max}(F_1), \omega_{\tau_B}^{\rm max}(F_2)) & \text{by \cref{lemma:distance between adjoints}} \\
        & = \Delta_{\tau_B,L}^{\rm max}(F_1, F_2), &
    \end{align*}
    Hence, we get that
    \begin{equation*}
        \Delta_{\tau_C,M}^{\rm max}(G_1 \circ F_1, G_2 \circ F_2)
        \leq \Delta_{\tau_C,K}^{\rm max}(G_1, G_2) + \Delta_{\tau_B,L}^{\rm max}(F_1, F_2). \qedhere
    \end{equation*}
\end{proof}

\begin{remark}\label{rem:Chaining 1 for spatial norm}
    If the tracial states $\tau_A$, $\tau_B$, and $\tau_C$ are amenable, then the above result has a version where the maximal tensor product is interchanged with the minimal tensor product instead. The proof of this is completely analogous to the one given above, so we omit the details here.
\end{remark}

Recall the seminorms $L_A \otimes 1_B$ and $1_A \otimes L_B$ defined in \eqref{eq:def-tensor-seminorms}, which play an important role in the following.

\begin{corollary}\label{cor:chaining 2}
    Let $A$, $B$, and $C$ be unital $C^*$-algebras with faithful tracial states $\tau_A$, $\tau_B$, and $\tau_C$, respectively, and with seminorms $L_A$, $L_B$, and $L_C$, respectively. Suppose that
    \begin{enumerate}
        \item $F_1 \in \UCP(A,B)$, $F_2 \in \TC_{\tau_B}(A,B)$ are $(\tau_A,\tau_B)$-adjointable,
        \item $G_1 \in \TC_{\tau_C}(B,C)$, $G_2 \in \QC(B,C, \tau_B, \tau_C)$ are $(\tau_B,\tau_C)$-adjointable,
        \item $L_B \circ F_1 \leq L_A$, and that
        \item $L_B \circ G_2^\sharp \leq L_C$.
    \end{enumerate}
    Then
    \begin{equation*}
        \Delta_{\tau_C,L_{A \otimes_{\rm max} C^{\rm op}}}^{\rm max}(G_1 \circ F_1, G_2 \circ F_2)
        \leq \Delta_{\tau_C,L_{B \otimes_{\rm max} C^{\rm op}}}^{\rm max}(G_1, G_2) + \Delta_{\tau_B,L_{A \otimes_{\rm max} B^{\rm op}}}^{\rm max}(F_1, F_2).
    \end{equation*}
    If we furthermore have that $\tau_A$, $\tau_B$, and $\tau_C$ are amenable, then
    \begin{equation*}
        \Delta_{\tau_C,L_{A \otimes_{\rm min} C^{\rm op}}}^{\rm min}(G_1 \circ F_1, G_2 \circ F_2)
        \leq \Delta_{\tau_C,L_{B \otimes_{\rm min} C^{\rm op}}}^{\rm min}(G_1, G_2) + \Delta_{\tau_B,L_{A \otimes_{\rm min} B^{\rm op}}}^{\rm min}(F_1, F_2).
    \end{equation*}
\end{corollary}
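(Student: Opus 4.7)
The plan is to deduce this corollary as a direct application of \cref{thm:chaining 1} with the three abstract seminorms chosen as the tensor-lifted versions of $L_A$, $L_B$, $L_C$: namely $L := L_{A \otimes_{\rm max} B^{\rm op}}$, $K := L_{B \otimes_{\rm max} C^{\rm op}}$, and $M := L_{A \otimes_{\rm max} C^{\rm op}}$, each defined through \eqref{eq:def-tensor-seminorms} and \eqref{eq:def-opposite-seminorm}. First I would verify that the running hypotheses of \cref{thm:chaining 1} hold: composability of the pairs $(F_1,G_1)$, $(F_2,G_2)$, and $(F_1,G_2)$ follows from \cref{lemma:composition of trace channels}, using that $\UCP(A,B) \subseteq \TC_{\tau_B}(A,B)$ and $\QC(B,C,\tau_B,\tau_C) \subseteq \TC_{\tau_C}(B,C)$ when the ambient traces are normalized.

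The substantive conditions to verify are
\begin{equation*}
    K \circ (F_1 \otimes \id_{C^{\rm op}}) \leq M \quad\text{and}\quad L^{\rm op} \circ (G_2^\sharp \otimes \id_{A^{\rm op}}) \leq M^{\rm op}.
\end{equation*}
Using \eqref{eq:def-tensor-seminorms}, I would split $K$ as $L_B \otimes 1 + 1 \otimes L_{C^{\rm op}}$. The $L_B \otimes 1$ part is controlled by the observation that partial slicing $(\id_A \otimes \psi)$ commutes with $F_1 \otimes \id_{C^{\rm op}}$, so the hypothesis $L_B \circ F_1 \leq L_A$ yields domination by $(L_A \otimes 1)(z)$. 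The $1 \otimes L_{C^{\rm op}}$ part requires pulling back a state: since $F_1 \in \UCP(A,B)$, any $\phi \in S(B)$ yields $\phi \circ F_1 \in S(A)$, and enlarging the supremum gives domination by $(1 \otimes L_{C^{\rm op}})(z)$. The second inequality is handled by the symmetric argument, after invoking \cref{rem:sLip-norms and adjoints} to identify $L^{\rm op} = L_{B \otimes_{\rm max} A^{\rm op}}$ and $M^{\rm op} = L_{C \otimes_{\rm max} A^{\rm op}}$, with $L_B \circ G_2^\sharp \leq L_C$ playing the role of $L_B \circ F_1 \leq L_A$.

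The key obstacle is the state-pullback step in the second inequality, which demands that $G_2^\sharp$ be unital so that $\phi \circ G_2^\sharp \in S(C)$ whenever $\phi \in S(B)$. To secure this, I would exploit that $G_2 \in \QC(B,C,\tau_B,\tau_C)$ is trace-preserving: specializing the adjoint identity at $c = 1_C$ gives
\begin{equation*}
    \tau_B(b) = \tau_C(G_2(b)) = \tau_C(G_2(b) \cdot 1_C) = \tau_B(b\, G_2^\sharp(1_C))
\end{equation*}
for every $b \in B$, and faithfulness of $\tau_B$ then forces $G_2^\sharp(1_C) = 1_B$. Combined with \cref{prop:adjoint of trace channel}, which supplies complete positivity of $G_2^\sharp$, this yields $G_2^\sharp \in \UCP(C,B)$ and completes the verification of both inequalities. \cref{thm:chaining 1} now produces the max-tensor bound, while the min-tensor bound follows by the same argument using the amenable analogue noted in \cref{rem:Chaining 1 for spatial norm}.
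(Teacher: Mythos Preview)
Your proposal is correct and follows essentially the same route as the paper's proof: reduce to \cref{thm:chaining 1} by verifying the two seminorm inequalities via the splitting $L_{B \otimes C^{\rm op}} = L_B \otimes 1 + 1 \otimes L_{C^{\rm op}}$, using unitality of $F_1$ and of $G_2^\sharp$ for the state-pullback step, and invoking \cref{rem:sLip-norms and adjoints} to identify $L^{\rm op}$ and $M^{\rm op}$. The only cosmetic difference is that you justify $G_2^\sharp \in \UCP(C,B)$ directly from the adjoint relation and faithfulness of $\tau_B$, whereas the paper phrases this as a specialization of the proof of \cref{prop:adjoint of trace channel}; both arguments are the same in substance.
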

\begin{proof}
    Note that all the pairs $(F_1,G_1)$, $(F_1,G_2)$, and $(F_2,G_2)$ are $\TC$-composable, hence we are in the setting of \cref{thm:chaining 1}. By \cref{rem:sLip-norms and adjoints} we simply need to show that the inequalities $L_B \circ F_1 \leq L_A$ and $L_B \circ G_2^\sharp \leq L_C$ imply that
    \begin{align*}
        L_{B \otimes_{\rm max} C^{\rm op}} \circ (F_1 \otimes \id_{C^{\rm op}}) & \leq L_{A \otimes_{\rm max} C^{\rm op}}, \text{ and } \\
        L_{B \otimes_{\rm max} A^{\rm op}} \circ (G_2^\sharp \otimes \id_{A^{\rm op}}) & \leq L_{C \otimes_{\rm max} A^{\rm op}}.
    \end{align*}
    Let $X \in A \otimes_{\rm max} C^{\rm op}$. Then,
    \begin{align*}
        & L_{B \otimes_{\rm max} C^{\rm op}}((F_1 \otimes \id_{C^{\rm op}})(X)) \\
        & = \sup\limits_{\psi \in S(C)}L_B((F_1 \otimes \psi)(X)) + \sup\limits_{\varphi \in S(B)}L_C(((\varphi \circ F_1) \otimes \id_{C^{\rm op}})(X)) \\
        & = \sup\limits_{\psi \in S(C)}L_B(F_1((\id_A \otimes \psi)(X))) + \sup\limits_{\varphi \in S(B)}L_C(((F_1^*\varphi) \otimes \id_{C^{\rm op}})(X)) \\
        & \leq \sup\limits_{\psi \in S(C)}L_A((\id_A \otimes \psi)(X)) + \sup\limits_{\phi \in S(A)}L_C((\phi \otimes \id_{C^{\rm op}})(X)) \\
        & = L_{A \otimes_{\rm max} C^{\rm op}}(X),
    \end{align*}
    where $F_1^*\varphi \in S(A)$ for every $\varphi \in S(B)$ since $F_1 \in \UCP(A,B)$. Note that the proof of \cref{prop:adjoint of trace channel} specializes to show that $G_2^\sharp \in \UCP(C,B)$ since $G_2 \in \QC(B,C,\tau_B, \tau_C)$. Hence, the second inequality follows by the same argument. Then, by \cref{rem:sLip-norms and adjoints} and \cref{thm:chaining 1} we have that
    \begin{equation*}
        \Delta_{\tau_C,L_{A \otimes_{\rm max} C^{\rm op}}}^{\rm max}(G_1 \circ F_1, G_2 \circ F_2)
        \leq \Delta_{\tau_C,L_{B \otimes_{\rm max} C^{\rm op}}}^{\rm max}(G_1, G_2) + \Delta_{\tau_B,L_{A \otimes_{\rm max} B^{\rm op}}}^{\rm max}(F_1, F_2).
    \end{equation*}
    In the case where $\tau_A$, $\tau_B$, and $\tau_C$ are amenable, the inequality
    \begin{equation*}
        \Delta_{\tau_C,L_{A \otimes_{\rm min} C^{\rm op}}}^{\rm min}(G_1 \circ F_1, G_2 \circ F_2)
        \leq \Delta_{\tau_C,L_{B \otimes_{\rm min} C^{\rm op}}}^{\rm min}(G_1, G_2) + \Delta_{\tau_B,L_{A \otimes_{\rm min} B^{\rm op}}}^{\rm min}(F_1, F_2)
    \end{equation*}
    follows by \cref{rem:Chaining 1 for spatial norm}.
\end{proof}

\section{Applications}\label{sec:applications}

\subsection{The Kasparov external product of spectral triples}\label{sec:Kasparov-products}

Recall that we defined unital spectral triples in \cref{sec:lipschitz-seminorms-and-cqms}. In this section we will show that using the Kasparov external product from $KK$-theory, we can from any spectral triple generate a sequence of seminorms on the matrix amplifications which naturally satisfy attractive stability properties, as discussed in \cref{sec:stability}. This will be made precise below, see \cref{thm:stability 2}. We proceed to closely follow the exposition given in \cite{Kaad24}. Let $(A,H_A,\partial_A)$ be a unital spectral triple, and suppose that $\calB \subseteq B(H_B)$ is a unital $*$-subalgebra. We may then form the essentially self-adjoint operators
\begin{equation*}
    \begin{split}
        \partial_A \otimes 1_{\calB} &\colon {\rm Dom}(\partial_A) \otimes H_B \to H_A \otimes H_B \\
        1_{\calB} \otimes \partial_A &\colon H_B \otimes {\rm Dom}(\partial_A) \to H_B \otimes H_A
    \end{split}
\end{equation*}
where the domains are given by the algebraic tensor product and the codomains are the Hilbert space tensor products of the given Hilbert spaces.
Taking closures, we obtain closed, self-adjoint operators which we by $\partial_A \widehat{\otimes} 1_{\calB}$ and $1_{\calB} \widehat{\otimes} \partial_A$, respectively. 
While $\partial_A \widehat{\otimes} 1_{\calB}$ and $1_{\calB} \widehat{\otimes} \partial_B$ do not generally have compact resolvents as self-adjoint unbounded operators on $H_A \otimes H_B$ and $H_B \otimes H_A$, respectively, we can still use them to induce metrics much in the same way as in \eqref{eq:def-spec-triple-slip-norm}. More precisely, by identifying elements of the algebraic tensor products with operators on the Hilbert space tensor products in the obvious way, we could set
\begin{equation}\label{eq:seminorms-from-partial-diracs}
    \begin{split}
        L_{\partial_A \widehat{\otimes}1_{\calB}} (z) &= \Vert [\partial_A \widehat{\otimes}1_{\calB}, z] \Vert_{B(H_A \otimes H_B)} \quad \text{ for $z \in \calA \odot \calB$} \\
        L_{1_{\calB} \widehat{\otimes}\partial_A}(z) &= \Vert [1_{\calB}\widehat{\otimes}\partial_A, z] \Vert_{B(H_B \otimes H_A)} \quad \text{ for $z \in \calB \odot \calA$} 
    \end{split}
\end{equation}
and extend by $\infty$ in accordance with \cref{remark:extension-by-infty}. Denote by $B$ the $C^*$-closure of $\calB$ in $B(H_B)$.

We proceed to discuss the external Kasparov product of unital spectral triples. Let therefore
$(A,H_A,\partial_A)$ and $(B,H_B,\partial_B)$ be unital spectral triples. We may form the external Kasparov product $(A \otimes_{\rm min} B, H, \partial_A \times \partial_B)$, where the Hilbert space $H$, and the self-adjoint operator $\partial_A \times \partial_B$ depend on the parity of the two spectral triples as follows:

\medskip \textbf{Even times even:} Set $H = H_A \otimes H_B$ and endow $H$ with a $\bbZ/2\bbZ$ grading defined by
\begin{equation*}
    \gamma = \gamma_A \otimes \gamma_B
\end{equation*}
and  $\partial_A \times \partial_B$ to be the closure of the symmetric self-adjoint operator
\begin{equation*}
     \partial_A \otimes \id_{H_B} + \gamma_A \otimes \partial_B \colon {\rm Dom}(\partial_A) \otimes {\rm Dom}(\partial_B) \to H_A \otimes H_B
\end{equation*}

\textbf{Odd times odd:} Set $H = (H_A \otimes H_B) \oplus (H_A \otimes H_B)$ and endow $H$ with a $\bbZ/2\bbZ$ grading defined by
\begin{equation*}
    \gamma =
    \begin{pmatrix}
        1 & 0 \\
        0 & -1
    \end{pmatrix}.
\end{equation*}
The minimal tensor product $A \otimes_{\rm min} B$ is faithfully represented on $H$ via the diagonal representation
\begin{equation*}
    x \mapsto
    \begin{pmatrix}
        x & 0 \\
        0 & x
    \end{pmatrix}.
\end{equation*}
Finally, consider the operator
\begin{equation*}
\begin{split}
    \begin{pmatrix}
        0 & \partial_A \otimes \id_{H_B} + i\id_{H_A} \otimes \partial_B \\
        \partial_A \otimes \id_{H_B} - i\id_{H_A} \otimes \partial_B & 0
    \end{pmatrix}
\end{split}
\end{equation*}
as an operator $({\rm Dom}(\partial_A) \otimes {\rm Dom}(\partial_B))^{\oplus 2} \to (H_A \otimes H_B)^{\oplus 2}$. We let $\partial_A \times \partial_B$ be the closure of this operator.

\medskip \textbf{Odd times even and even times odd:} In both these cases $H = H_A \otimes H_B$ without any grading. If $(B,H_B,\partial_B,\gamma_B)$ is even, we set $\partial_A \times \partial_B$ to be the closure of
\begin{equation*}
    \partial_A \otimes \gamma_B + \id_{H_A} \otimes \partial_B \colon {\rm Dom}(\partial_A) \otimes {\rm Dom}(\partial_B) \to H_A \otimes H_B.
\end{equation*}
If $(A,H_A,\partial_A,\gamma_A)$ is even, we set $\partial_A \times \partial_B$ to be the closure of
\begin{equation*}
     \partial_A \otimes \id_{H_B} + \gamma_A \otimes \partial_B \colon {\rm Dom}(\partial_A) \otimes {\rm Dom}(\partial_B) \to H_A \otimes H_B.
\end{equation*}

The following follows by specializing \cite[Proposition 4.10]{Kaad24} and \cite[Theorem 7.1]{Kaad24} to our setting.

\begin{lemma}\label{thm:spectral triples Kaad}
    Let $(A,H_A,\partial_A)$ and $(B,H_B,\partial_B)$ be unital spectral triples. Then for any $x \in A \otimes_{\rm min} B$ we have that
    \begin{align*}
        (1_A \otimes L_{\partial_B})(x) & \leq L_{\partial_A \times \partial_B}(x), \text{ and} \\
        (L_{\partial_A} \otimes 1_B)(x) & \leq L_{\partial_A \times \partial_B}(x).
    \end{align*}
\end{lemma}
\begin{proof}
    Note that Kaad considers \emph{operator seminorms}, cf.~\cite[Definition 4.1]{Kaad24}. Looking at the $s = 1$ case from \cite[Proposition 4.10]{Kaad24}, and the proof of \cite[Theorem 7.1]{Kaad24}, we get that for any $x \in A \otimes_{\rm min} B$,
    \begin{align*}
        (L_{\partial_A} \otimes 1_B)(x)
        & = \sup\{ L_{\partial_A}((\id_{A} \otimes \psi)(x)) : \psi \in S(B) \} \\
        & \leq \sup\{ L_{\partial_{A}^{\oplus n}}((\id_{M_n(A)} \otimes \psi)(x)) : n \geq 1, \ \psi \in \UCP(B,M_n(\bbC)) \} \\
        & = L_{\partial_A \widehat{\otimes} 1_B}(x) \quad\quad\quad \text{by \cite[Proposition 4.10]{Kaad24}} \\
        & \leq L_{\partial_A \times \partial_B}(x) \quad\quad\quad \text{by the proof of \cite[Theorem 7.1]{Kaad24}}.
    \end{align*}
    The case that $(1_A \otimes L_{\partial_B}) \leq L_{\partial_A \times \partial_B}$ is completely symmetrical to what we have presented here.
\end{proof}

The next theorem is the main result of this section. 
In brief, it tells us that sequences of stable metrics not only exist in theory, but in fact appear naturally using constructions native to noncommutative geometry. 
Specifically, we obtain such sequences using seminorms arising from iteratively applying the external Kasparov product of spectral triples. 

The stability of the sequence $\{\Delta_{\Tr_n \otimes \tau, L_n}^{\rm min}\}_{n \in \bbN}$ in \cref{thm:stability 2} is completely independent of choice of spectral triple for $M_n(\bbC)$, $n \in \bbN$. Physically, this is precisely the behavior we want as the different matrix amplifications model interactions with an ancillary system, whose internal structure should not affect quantum processes in the system under consideration. It is then altogether more surprising that this property is guaranteed by the external Kasparov product. 

In the following, we will for ease of notation write $1$ instead of $\id_H$ for the respective identity maps appearing in the formulas for $\partial_A \times \partial_B$.

\begin{theorem}\label{thm:stability 2}
    Let $(A,H_A,\partial_A)$, and $(B^{\rm op},H_B,\partial_B)$ be unital spectral triples, and assume that $B$ admits an amenable faithful trace $\tau$. For each $n \in \bbN$, let $(M_n(\bbC),H_n,\partial_n)$ be a spectral triple. By identifying $M_n(\bbC)^{\rm op}$ with $M_n(\bbC)$ we may view $(M_n(\bbC),H_n,\partial_n)$ as a spectral triple for $M_n(\bbC)^{\rm op}$ as well. Let
    \begin{align*}
        L_1 & = L_{\partial_A \times \partial_B}, \text{ and } \\
        L_n & = L_{(\partial_n \times \partial_n) \times (\partial_A \times \partial_B)} \circ \Sigma_{[23]}, \text{ for } n \geq 2.
    \end{align*}
    Then the sequence $\{\Delta_{\Tr_n \otimes \tau, L_n}^{\rm min}\}_{n \in \bbN}$ of extended metrics on $\TC_{\Tr_n \otimes \tau}(M_n(\bbC) \otimes A, M_n(\bbC) \otimes B)$ is stable.
\end{theorem}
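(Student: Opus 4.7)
The plan is to apply \cref{thm:stability 1} in its minimal-tensor-product version from \cref{rem:stability for spatial norm}, which is available because $\tau$ is an amenable faithful trace on $B$ (so the corresponding trace $\Tr_n \otimes \tau$ is also amenable, cf.~\cite[Proposition~6.3.5]{BO}). It therefore suffices to verify the two hypotheses of \cref{thm:stability 1}, namely
\begin{enumerate}
    \item $(1_{M_n(\bbC) \otimes M_n(\bbC)^{\rm op}} \otimes L_1) \circ \Sigma_{[23]} \leq L_n$, and
    \item $L_n(\Sigma_{[23]}(1_n \otimes 1_n^{\rm op} \otimes x)) \leq 1$ for every $x \in A \otimes_{\rm min} B^{\rm op}$ with $L_1(x) \leq 1$.
\end{enumerate}

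Abbreviate $D := (\partial_n \times \partial_n) \times (\partial_A \times \partial_B)$, so that $L_n = L_D \circ \Sigma_{[23]}$. Since $\Sigma_{[23]}^2 = \id$, hypothesis (1) is equivalent to
\[
(1_{M_n(\bbC) \otimes M_n(\bbC)^{\rm op}} \otimes L_{\partial_A \times \partial_B})(w) \leq L_D(w)
\]
for all $w \in (M_n(\bbC) \otimes M_n(\bbC)^{\rm op}) \otimes_{\rm min} (A \otimes_{\rm min} B^{\rm op})$. This is precisely the first inequality in \cref{thm:spectral triples Kaad} applied to the two unital spectral triples $(M_n(\bbC) \otimes M_n(\bbC)^{\rm op}, H_n \otimes H_n, \partial_n \times \partial_n)$ and $(A \otimes_{\rm min} B^{\rm op}, H_A \otimes H_B, \partial_A \times \partial_B)$, both of which are themselves exterior Kasparov products of the spectral triples in the hypotheses.

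Applying $\Sigma_{[23]}^2 = \id$ once more, hypothesis (2) reduces to the bound $L_D(1_{M_n(\bbC) \otimes M_n(\bbC)^{\rm op}} \otimes x) \leq 1$, which I plan to deduce from the stronger equality
\[
L_D(1_{M_n(\bbC) \otimes M_n(\bbC)^{\rm op}} \otimes x) = L_{\partial_A \times \partial_B}(x) = L_1(x).
\]
This identity follows from a direct computation of the commutator $[D, 1 \otimes x]$ in each of the four parity cases listed in \cref{sec:Kasparov-products}. In the even-even, even-odd and odd-even cases $D$ has the form $(\partial_n \times \partial_n) \otimes c + c' \otimes (\partial_A \times \partial_B)$ with $c, c'$ either identity operators or grading operators commuting with the algebra representation. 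Because $1$ is central, the first summand contributes zero to the commutator with $1 \otimes x$, while the second summand contributes $c' \otimes [\partial_A \times \partial_B, x]$, whose operator norm equals $\|[\partial_A \times \partial_B, x]\| = L_{\partial_A \times \partial_B}(x)$ since $c'$ is either the identity or a self-adjoint unitary. In the odd-odd case an analogous matrix computation yields $[D, 1 \otimes x]$ as a $2 \times 2$ off-diagonal block operator with entries $\pm i (1 \otimes [\partial_A \times \partial_B, x])$, whose operator norm is again $L_{\partial_A \times \partial_B}(x)$ (as one verifies by computing $T^\ast T$).

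I expect the main routine obstacle to be careful bookkeeping through the four parity cases, in particular the odd-odd case with its $2 \times 2$ matrix structure and the mixed-parity cases, where one must invoke that the algebra representation is by even operators to annihilate certain spurious commutator terms involving the grading. Once the two hypotheses are verified, \cref{thm:stability 1} together with \cref{rem:stability for spatial norm} directly delivers the stability of $\{\Delta_{\Tr_n \otimes \tau, L_n}^{\rm min}\}_{n \in \bbN}$.
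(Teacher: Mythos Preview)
Your proof is correct and follows essentially the same strategy as the paper: verify the two hypotheses of \cref{thm:stability 1} via \cref{rem:stability for spatial norm}, invoking \cref{thm:spectral triples Kaad} for hypothesis~(1) and establishing the stronger equality $L_D(1\otimes x)=L_{\partial_A\times\partial_B}(x)$ for hypothesis~(2) by a parity-by-parity commutator computation. The only organizational difference is that the paper tracks the parities of the three original triples $\partial_n,\partial_A,\partial_B$ (eight cases) rather than those of the two composite triples; note incidentally that $\partial_n\times\partial_n$ is always even regardless of the parity of $\partial_n$, so your odd-even and odd-odd outer cases are in fact vacuous here.
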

\begin{proof}
    For ease of notation, let $\partial = (\partial_n \times \partial_n) \times (\partial_A \times \partial_B)$. To apply \cref{thm:stability 1} and \cref{rem:stability for spatial norm} we need to check that the following two conditions are satisfied:
    \begin{enumerate}
        \item $(1_{M_n(\bbC) \otimes M_n(\bbC)^{\rm op}} \otimes L_1) \circ \Sigma_{[23]} \leq L_n$, and
        \item $L_n(\Sigma_{[23]}(1_n \otimes 1_n^{\rm op} \otimes x)) \leq 1$ for every $x \in A \otimes_{\rm min} B^{\rm op}$ satisfying $L_1(x) \leq 1$.
    \end{enumerate}
    By \cref{thm:spectral triples Kaad} we have that
    \begin{equation*}
        1_{M_n(\bbC) \otimes M_n(\bbC)^{\rm op}} \otimes L_{\partial_A \times \partial_B} \leq L_{\partial}.
    \end{equation*}
    Since pre-composing with $\Sigma_{[23]}$ does not change this inequality we get that
    \begin{equation*}
        (1_{M_n(\bbC) \otimes M_n(\bbC)^{\rm op}} \otimes L_1) \circ \Sigma_{[23]} \leq L_n.
    \end{equation*}
    It remains to show that $L_n(\Sigma_{[23]}(1_n \otimes 1_n^{\rm op} \otimes x)) \leq 1$ for every $x \in A \otimes_{\rm min} B^{\rm op}$ satisfying $L_1(x) \leq 1$. By the definition of $L_n$, this means that we want to show that
    \begin{equation*}
        L_{\partial}(1_n \otimes 1_n^{\rm op} \otimes x) \leq 1
    \end{equation*}
    for every $x \in A \otimes_{\rm min} B^{\rm op}$ satisfying $L_{\partial_A \times \partial_B}(x) \leq 1$. We will in fact prove the stronger statement that
    \begin{equation}\label{eqn:seminorm from external products}
        L_{\partial}(1_n \otimes 1_n^{\rm op} \otimes x) = L_{\partial_A \times \partial_B}(x)
    \end{equation}
    for all $x \in A \otimes_{\rm min} B^{\rm op}$. To prove this we only need to consider the parity of the spectral triple for $M_n(\bbC) \otimes M_n(\bbC)^{\rm op}$, and the parity of the spectral triple for $A \otimes_{\rm min} B^{\rm op}$. In fact, since we are considering the same spectral triple for $M_n(\bbC)$ and $M_n(\bbC)^{\rm op}$, the resulting external product of these two spectral triples will always be even. Hence, we only need to consider the parity of the spectral triple for $A \otimes_{\rm min} B^{\rm op}$. In both cases we see that
    \begin{equation*}
        \partial
        = (\partial_n \times \partial_n) \otimes 1
        + \gamma_{M_n(\bbC) \otimes M_n(\bbC)^{\rm op}} \otimes (\partial_A \times \partial_B).
    \end{equation*}
    Hence, for any $x \in A \otimes_{\rm min} B^{\rm op}$ we have that
    \begin{align*}
        L_\partial(1_n \otimes 1_n^{\rm op} \otimes x)
        & = \|[\partial, 1_n \otimes 1_n^{\rm op} \otimes x]\| \\
        & = \|[(\partial_n \times \partial_n) \otimes 1 + \gamma_{M_n(\bbC) \otimes M_n(\bbC)^{\rm op}} \otimes (\partial_A \times \partial_B), 1_n \otimes 1_n^{\rm op} \otimes x]\| \\
        & = \|[\gamma_{M_n(\bbC) \otimes M_n(\bbC)^{\rm op}} \otimes (\partial_A \times \partial_B), 1_n \otimes 1_n^{\rm op} \otimes x]\| \\
        & = \|\gamma_{M_n(\bbC) \otimes M_n(\bbC)^{\rm op}} \otimes [\partial_A \times \partial_B, x]\| \\
        & = \|\gamma_{M_n(\bbC) \otimes M_n(\bbC)^{\rm op}}\| \|[\partial_A \times \partial_B, x]\| \\
        & = \| [\partial_A \times \partial_B, x]\| \\
        & = L_{\partial_A \times \partial_B}(x). \qedhere
    \end{align*}
\end{proof}

\begin{remark}
    Note that one could use two different spectral triples for $M_n(\bbC)$ and for $M_n(\bbC)^{\rm op}$ in \cref{thm:stability 2}. If one chooses to do this in a way such that these spectral triples have different parities then one would have to check that equation \eqref{eqn:seminorm from external products} still holds in this case.
\end{remark}

\subsection{Amenable groups and twisted group \texorpdfstring{$C^*$}{C*}-algebras}\label{sec:group-Cstar-algebras-and-length-functions}

By specializing to countable amenable discrete groups, we will in this section construct a stable sequence of extended metrics on the set of trace channels on a twisted group $C^*$-algebra, see \cref{example:group-example-chaining-and-stability}. Moreover, we will show that the class of unital completely positive maps on twisted group $C^*$-algebras coming from the normalized positive definite functions on the group behave well with respect to the chaining property for the same metrics, see \cref{thm:chaining 3}. We use the notation introduced in \cref{twisted}.

\begin{lemma}\label{lemma:adjointability of multipliers}
    Let $G$ be a discrete group, $\sigma\in Z^2(G,\bbT)$, and $\phi \in P(G)$. Then the completely positive map $M_\phi \colon \Cred(G,\sigma) \to \Cred(G,\sigma)$ is $(\tau_{\sigma},\tau_{\sigma})$-adjointable, and $M_\phi^\sharp = M_{\phi^\circ}$ where the function $\phi^\circ \in P(G)$ is given by
    \begin{align*}
        \phi^\circ (g) = \phi(g^{-1})
    \end{align*}
    for all $g \in G$.
\end{lemma}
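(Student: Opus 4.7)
The plan is to check the adjointability identity on the generators $\{\lambda^\sigma_g\}_{g \in G}$ and then extend by linearity and continuity, after first confirming that $M_{\phi^\circ}$ is a well-defined bounded linear map on $\Cred(G,\sigma)$.

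First, I would verify that $\phi^\circ \in P(G)$. Since $\phi$ is positive definite, we have the standard identity $\phi(g^{-1}) = \overline{\phi(g)}$, hence $\phi^\circ = \overline{\phi}$ pointwise. Positive definiteness is preserved under complex conjugation: if $\sum_{i,j} c_i \overline{c_j} \phi(g_j^{-1} g_i) \geq 0$ for every choice of $(c_i)$ and $(g_i)$, then replacing $(c_i)$ by $(\overline{c_i})$ and taking complex conjugates yields the analogous inequality for $\overline{\phi}$. So $\phi^\circ \in P(G)$, and consequently $M_{\phi^\circ}$ extends to a bounded completely positive map $\Cred(G,\sigma)\to\Cred(G,\sigma)$ by the same cited result used for $M_\phi$.

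Next, I would verify the adjointability identity
\[\tau_\sigma(M_\phi(a) b) = \tau_\sigma(a\, M_{\phi^\circ}(b))\]
on the dense $*$-subalgebra spanned by $\{\lambda^\sigma_g : g \in G\}$. Using $\lambda^\sigma_g \lambda^\sigma_h = \sigma(g,h)\lambda^\sigma_{gh}$ and $\tau_\sigma(\lambda^\sigma_k) = \delta_{k,1_G}$, I compute
\[\tau_\sigma(M_\phi(\lambda^\sigma_g)\lambda^\sigma_h) = \phi(g)\,\sigma(g,h)\,\tau_\sigma(\lambda^\sigma_{gh}),\]
which vanishes unless $h = g^{-1}$, in which case it equals $\phi(g)\sigma(g,g^{-1})$. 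On the other hand,
\[\tau_\sigma(\lambda^\sigma_g M_{\phi^\circ}(\lambda^\sigma_h)) = \phi^\circ(h)\,\sigma(g,h)\,\tau_\sigma(\lambda^\sigma_{gh}) = \phi(h^{-1})\,\sigma(g,h)\,\tau_\sigma(\lambda^\sigma_{gh}),\]
which similarly vanishes unless $h=g^{-1}$, and then equals $\phi(g)\sigma(g,g^{-1})$. Hence the two expressions agree on all generator pairs, and by bilinearity the identity holds for all $a,b$ of the form $\lambda^\sigma(f_1), \lambda^\sigma(f_2)$ with $f_1, f_2 \in C_c(G,\sigma)$.

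Finally, I would extend the identity to $\Cred(G,\sigma)\times\Cred(G,\sigma)$ by density. All four maps involved ($M_\phi$, $M_{\phi^\circ}$, left multiplication, and $\tau_\sigma$) are norm-continuous, so the bilinear forms $(a,b)\mapsto \tau_\sigma(M_\phi(a)b)$ and $(a,b)\mapsto \tau_\sigma(a M_{\phi^\circ}(b))$ are jointly continuous. Since they agree on the dense subalgebra $\lambda^\sigma(C_c(G,\sigma))$ in each slot, they agree everywhere, which establishes that $M_\phi$ is $(\tau_\sigma,\tau_\sigma)$-adjointable with $M_\phi^\sharp = M_{\phi^\circ}$. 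No step is particularly difficult here; the only subtlety is being careful about the appearance of the cocycle $\sigma$ in $\lambda^\sigma_g\lambda^\sigma_h$, but this cancels symmetrically on both sides of the identity.
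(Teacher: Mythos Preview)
Your proof is correct and follows essentially the same approach as the paper: verify that $\phi^\circ \in P(G)$ (the paper cites \cite{BedosConti2016} while you give a direct argument via $\phi^\circ=\overline{\phi}$), then check the adjointability identity on the dense subalgebra $\lambda^\sigma(C_c(G,\sigma))$ and extend by continuity. The only cosmetic difference is that the paper computes with general $f_1,f_2\in C_c(G,\sigma)$ via the convolution formula, whereas you check it on generator pairs $\lambda^\sigma_g,\lambda^\sigma_h$ and invoke bilinearity; these are equivalent.
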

\begin{proof}
    That $\phi^\circ \in P(G)$ follows by \cite[p.~7]{BedosConti2016},  
    so 
    $M_{\phi^\circ} \colon \Cred(G,\sigma) \to \Cred(G, \sigma)$ is completely positive. 
    For $f_1 , f_2\in C_c(G,\sigma)$, we get that
    \begin{align*}
        \tau_{\sigma}(M_\phi(f_1) f_2) & = \sum_{g \in G} \phi(g) f_1(g) f_2(g^{-1}) \sigma(g,g^{-1}) \\
        & = \sum_{g \in G} f_1(g) \phi^\circ(g^{-1}) f_2(g^{-1}) \sigma(g,g^{-1}) \\
        & = \tau_{\sigma}(f_1 M_{\phi^\circ}(f_2)).
    \end{align*}
    By continuity, this implies that $M_\phi$ is $(\tau_{\sigma},\tau_{\sigma})$-adjointable, with $M_\phi^\sharp = M_{\phi^\circ}$.
\end{proof}

\begin{remark}\label{rem:unitality of multipliers}
    It is clear that $\phi(1_G) = \phi^\circ(1_G)$. Hence, $\phi \in P_1(G)$ if and only if $\phi^\circ \in P_1(G)$.
\end{remark}

\begin{proposition}\label{prop:multipliers as self-adjoint subset of ucp maps}
    Let $G$ be a discrete group, and $\sigma \in Z^2(G,\bbT)$.
    Then $P_{1}(G)$ can be identified with a  subset of $\TC_{\tau_\sigma}(\Cred(G,\sigma),\Cred(G,\sigma))$ via the embedding $\phi \mapsto M_\phi$. Moreover, we have $M_\phi \in \TC_{\tau_\sigma}(\Cred(G,\sigma),\Cred(G,\sigma))$ if and only if $M_\phi^\sharp = M_{\phi^\circ} \in TC_{\tau_\sigma}(\Cred(G,\sigma),\Cred(G,\sigma))$.
\end{proposition}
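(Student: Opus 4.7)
The strategy is to assemble the statement from results already proved in this section together with \cref{prop:adjoint of trace channel}; no new technical work is needed, and the proposition is essentially a packaging lemma.

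First I would handle the embedding. Fix $\phi \in P_1(G)$. The discussion preceding \cref{lemma:L-cont-multipliers} records that the multiplier $M_\phi$ is a unital completely positive map on $\Cred(G,\sigma)$. Since $\tau_\sigma$ is a tracial state and $M_\phi$ is unital, one has $\tau_\sigma(M_\phi(1_{\Cred(G,\sigma)})) = \tau_\sigma(1_{\Cred(G,\sigma)}) = 1$, so $M_\phi \in \TC_{\tau_\sigma}(\Cred(G,\sigma), \Cred(G,\sigma))$. Injectivity of the assignment $\phi \mapsto M_\phi$ is immediate from the defining relation $M_\phi(\lambda^\sigma_g) = \phi(g)\lambda^\sigma_g$ combined with $\lambda^\sigma_g \neq 0$ for every $g \in G$.

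For the ``moreover'' claim, let $\phi \in P(G)$, so that $\phi^\circ \in P(G)$ (cf.~\cref{lemma:adjointability of multipliers}) and hence $M_{\phi^\circ}$ makes sense. By \cref{lemma:adjointability of multipliers}, $M_\phi$ is $(\tau_\sigma,\tau_\sigma)$-adjointable with $M_\phi^\sharp = M_{\phi^\circ}$. If $M_\phi \in \TC_{\tau_\sigma}(\Cred(G,\sigma),\Cred(G,\sigma))$, then \cref{prop:adjoint of trace channel}, applied with $A = B = \Cred(G,\sigma)$ and $\tau_A = \tau_B = \tau_\sigma$, directly yields $M_{\phi^\circ} = M_\phi^\sharp \in \TC_{\tau_\sigma}(\Cred(G,\sigma),\Cred(G,\sigma))$. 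The converse is by symmetry: since $(\phi^\circ)^\circ = \phi$, running the same argument with $\phi$ replaced by $\phi^\circ$ gives $M_\phi = (M_{\phi^\circ})^\sharp \in \TC_{\tau_\sigma}(\Cred(G,\sigma),\Cred(G,\sigma))$.

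There is essentially no obstacle here. The one conceptual point worth emphasizing is that the natural involution $\phi \mapsto \phi^\circ$ on $P(G)$ is precisely mirrored at the level of multipliers by the trace-adjoint operation $F \mapsto F^\sharp$ introduced in \cref{def:adjointability}, which is exactly the content of \cref{lemma:adjointability of multipliers}.
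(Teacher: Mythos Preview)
Your proof is correct and follows essentially the same approach as the paper's own proof. The only minor difference is that for the ``moreover'' direction the paper invokes \cref{rem:unitality of multipliers} (i.e., $\phi(1_G) = \phi^\circ(1_G)$) directly rather than the general \cref{prop:adjoint of trace channel}; both routes amount to the same one-line observation.
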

\begin{proof}
    Let $\phi \in P_{1}(G)$. Since $M_\phi \in \UCP(\Cred(G,\sigma),\Cred(G,\sigma))$, we have in particular that $M_\phi \in \TC_{\tau_\sigma}(\Cred(G,\sigma),\Cred(G,\sigma))$. The result now follows by \cref{lemma:adjointability of multipliers} and \cref{rem:unitality of multipliers}.
\end{proof}

\begin{theorem}\label{thm:chaining 3}
    Let $G$ be a countable discrete amenable group, $l \colon G \to [0,\infty)$ be a proper length function, and $\sigma \in Z^2(G,\bbT)$. Then the metric $\Delta_{\tau_\sigma,L_{\partial_l \times \partial_l}}^{\rm min}$ induced from the external Kasparov product of $(\Cred(G,\sigma), \ell^2(G), \partial_l)$ with $(\Cred(G,\sigma)^{\rm op}, \ell^2(G), \partial_l)$ restricted to $P_{1}(G)$ satisfies chaining. That is, we have
    \begin{align*}
       \Delta_{\tau_\sigma,L_{\partial_l \times \partial_l}}^{\rm min} (M_{\phi_{1}} \circ M_{\phi_{2}}, M_{\phi_{3}} \circ M_{\phi_{4}})
       \leq \Delta_{\tau_\sigma,L_{\partial_l \times \partial_l}}^{\rm min}(M_{\phi_{1}}, M_{\phi_{3}}) + \Delta_{\tau_\sigma,L_{\partial_l \times \partial_l}}^{\rm min}(M_{\phi_{2}}, M_{\phi_{4}})
    \end{align*}
    for all $\phi_1, \phi_2, \phi_3, \phi_4 \in P_1(G)$.
\end{theorem}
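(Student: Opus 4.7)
The plan is to apply Theorem \ref{thm:chaining 1} in its minimal tensor product version (cf.~Remark \ref{rem:Chaining 1 for spatial norm}), with $A = B = C = \Cred(G,\sigma)$, traces $\tau_A = \tau_B = \tau_C = \tau_\sigma$, seminorms $L = K = M = L_{\partial_l \times \partial_l}$, and maps $F_1 = M_{\phi_2}$, $F_2 = M_{\phi_4}$, $G_1 = M_{\phi_1}$, $G_2 = M_{\phi_3}$. Amenability of $G$ ensures that $\tau_\sigma$ is an amenable trace, so this minimal tensor product version is available.

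The adjointability and composability hypotheses are immediate: by Lemma \ref{lemma:adjointability of multipliers} and Remark \ref{rem:unitality of multipliers}, each $M_{\phi_i}^\sharp = M_{\phi_i^\circ}$ is a multiplier coming from an element of $P_1(G)$ and hence is UCP. A direct computation on generators $\lambda^\sigma_g$ moreover shows $\tau_\sigma \circ M_\psi = \tau_\sigma$ for every $\psi \in P_1(G)$, so each $M_{\phi_i}$ and each $M_{\phi_i}^\sharp$ lies in $\UCP \cap \QC$. Consequently every pair among the four multipliers is composable via the first option in the definition of composability.

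The main content is therefore verification of the two seminorm contraction inequalities
\begin{equation}\label{eq:plan-contractions}
    L_{\partial_l \times \partial_l} \circ (M_{\phi_2} \otimes \id) \leq L_{\partial_l \times \partial_l}, \qquad L_{\partial_l \times \partial_l}^{\rm op} \circ (M_{\phi_3^\circ} \otimes \id) \leq L_{\partial_l \times \partial_l}^{\rm op}.
\end{equation}
For the first, extend $M_{\phi_2}$ to the completely positive multiplier $T^{\phi_2}$ of $\ell^\infty(G) \rtimes_{{\rm red},\sigma} G$ appearing in the proof of Lemma \ref{lemma:L-cont-multipliers}; since $\phi_2 \in P_1(G)$ we have $\|T^{\phi_2}\| = 1$, so $T^{\phi_2}$ is UCP. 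In the odd-odd form of the Kasparov product, $\partial_l \times \partial_l$ is realized as the off-diagonal operator with entries $D_\pm := \partial_l \otimes 1 \pm i\cdot 1 \otimes \partial_l$, and a direct computation on generators $\lambda^\sigma_g \otimes \rho^\sigma_h$ using \eqref{eq:action-T-phi} yields the intertwining identities
\begin{equation*}
    [D_\pm, (M_{\phi_2} \otimes \id)(x)] = (T^{\phi_2} \otimes \id)([D_\pm, x]),
\end{equation*}
interpreted in the appropriate tensor products of twisted crossed products where the commutators naturally take values. Extending by linearity and continuity, and taking operator norms while using $\|T^{\phi_2} \otimes \id\| \leq \|T^{\phi_2}\|_{\rm cb} = 1$, gives the first inequality of \eqref{eq:plan-contractions}.

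For the second inequality, applying the $*$-isomorphism $\Sigma^{\rm op}$ reduces it to $L_{\partial_l \times \partial_l} \circ (\id \otimes \tilde M_{\phi_3^\circ}) \leq L_{\partial_l \times \partial_l}$, where $\tilde M_{\phi_3^\circ}$ denotes the corresponding multiplier on $\Cred(G,\sigma)^{\rm op}$ realized via the right regular representation. A symmetric argument, now extending $\tilde M_{\phi_3^\circ}$ to a multiplier of the right-sided twisted crossed product generated by $\ell^\infty(G)$ and $\{\rho^\sigma_g\}$ so as to accommodate the range of $[\partial_l, \rho^\sigma_h]$, yields this bound. With both inequalities of \eqref{eq:plan-contractions} in hand, Theorem \ref{thm:chaining 1} furnishes the chaining inequality. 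The main obstacle is the careful verification of the intertwining identities, which requires tracking two distinct crossed product algebras (one for each tensor factor of $\partial_l \times \partial_l$) in which the respective commutators take values, and confirming continuity of the completely bounded multiplier extensions on the associated minimal tensor products.
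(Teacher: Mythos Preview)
Your proposal is correct and follows essentially the same route as the paper: reduce to Theorem~\ref{thm:chaining 1} (minimal version via Remark~\ref{rem:Chaining 1 for spatial norm}), use amenability of $G$ to ensure $\tau_\sigma$ is amenable, and verify the two seminorm contraction inequalities via the intertwining identity $[D_\pm,(M_\phi\otimes\id)(x)] = (T^\phi\otimes\id)[D_\pm,x]$ in the odd--odd Kasparov product, with the second inequality obtained from the first by conjugation with $\Sigma^{\rm op}$. The paper carries out exactly this computation (phrased for a generic $\phi\in P_1(G)$ rather than the specific $\phi_2,\phi_3^\circ$), so there is nothing substantive to add.
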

\begin{proof}
    By \cite[Corollary 3.8]{Bedos1995} we have that if $G$ is amenable then $\tau_{\sigma}$ is an amenable tracial state, hence the metric $\Delta_{\tau_\sigma,L_{\partial_l \times \partial_l}}^{\rm min}$ is well-defined. To apply \cref{thm:chaining 1} and \cref{rem:Chaining 1 for spatial norm} we only need to verify that
    \begin{align*}
        L_{\partial_l \times \partial_l} \circ (M_\phi \otimes \id_{\Cred(G,\sigma)^{\rm op}})) & \leq L_{\partial_l \times \partial_l}, \text{ and } \\
        L_{\partial_l \times \partial_l}^{\rm op} \circ (M_\phi^\sharp \otimes \id_{\Cred(G,\sigma)^{\rm op}})) & \leq L_{\partial_l \times \partial_l}^{\rm op}
    \end{align*}
    for every $\phi \in P_1(G)$. Let $x \in \Cred(G,\sigma) \otimes_{\rm min} \Cred(G,\sigma)^{\rm op}$ and $\phi \in P_1(G)$. As in the proof of \cref{lemma:L-cont-multipliers}, we compute that
    \begin{align*}
        & \left[
        \begin{pmatrix}
            0 & \partial_l \otimes 1 + i \otimes \partial_l \\
            \partial_l \otimes 1 - i \otimes \partial_l & 0
        \end{pmatrix},
        \begin{pmatrix}
            (M_\phi \otimes \id_{\Cred(G,\sigma)^{\rm op}})(x) & 0 \\
            0 & (M_\phi \otimes \id_{\Cred(G,\sigma)^{\rm op}})(x)
        \end{pmatrix}
        \right] & \\
        & =
        \begin{pmatrix}
            0 & [(\partial_l \otimes 1 + i \otimes \partial_l), (M_\phi \otimes \id_{\Cred(G,\sigma)^{\rm op}})(x)] \\
            [(\partial_l \otimes 1 - i \otimes \partial_l), (M_\phi \otimes \id_{\Cred(G,\sigma)^{\rm op}})(x)] & 0
        \end{pmatrix} & \\
        & =
        \begin{pmatrix}
            0 & (T^\phi \otimes \id_{\Cred(G,\sigma)^{\rm op}})[(\partial_l \otimes 1 + i \otimes \partial_l), x] \\
            (T^\phi \otimes \id_{\Cred(G,\sigma)^{\rm op}})[(\partial_l \otimes 1 - i \otimes \partial_l), x] & 0
        \end{pmatrix} \\
        & =
        \begin{pmatrix}
            (T^\phi \otimes \id_{\Cred(G,\sigma)^{\rm op}}) & 0 \\
            0 & (T^\phi \otimes \id_{\Cred(G,\sigma)^{\rm op}})
        \end{pmatrix}
        \begin{pmatrix}
            0 & [(\partial_l \otimes 1 + i \otimes \partial_l), x] \\
            [(\partial_l \otimes 1 - i \otimes \partial_l), x] & 0
        \end{pmatrix}
    \end{align*}
    Computing the operator norm of this proves that $L_{\partial_l \times \partial_l} \circ (M_\phi \otimes \id_{\Cred(G,\sigma)^{\rm op}}) \leq L_{\partial_l \times \partial_l}$, since
    \begin{equation*}
        \left\|
        \begin{pmatrix}
            (T^\phi \otimes \id_{\Cred(G,\sigma)^{\rm op}}) & 0 \\
            0 & (T^\phi \otimes \id_{\Cred(G,\sigma)^{\rm op}})
        \end{pmatrix}
        \right\|
        = 1.
    \end{equation*}

    For $\phi \in P_1(G)$ define $M_\phi^{\rm op} \colon \Cred(G,\sigma)^{\rm op} \to \Cred(G,\sigma)^{\rm op}$ by
    \begin{equation*}
        M_\phi^{\rm op}(b^{\rm op}) = M_\phi(b)^{\rm op}.
    \end{equation*}
    Note that for any $x \in \Cred(G,\sigma) \otimes_{\rm min} \Cred(G,\sigma)^{\rm op}$ we then have that
    \begin{align*}
        \Sigma^{\rm op} ((M_\phi \otimes \id_{\Cred(G,\sigma)^{\rm op}})(x)) = (\id_{\Cred(G,\sigma)^{\rm op}} \otimes M_\phi^{\rm op})(\Sigma^{\rm op}(x)).
    \end{align*}
    We furthermore have by \cref{lemma:adjointability of multipliers} that $M_\phi^\sharp = M_{\phi^\circ}$. Hence it suffices to show that
    \begin{equation*}
        L_{\partial_l \times \partial_l} \circ (\id_{\Cred(G,\sigma)^{\rm op}} \otimes M_\phi^{\rm op}) \leq L_{\partial_l \times \partial_l}
    \end{equation*}
    for all $\phi \in P_1(G)$. This follows by the same computation as above.

    With this we may appeal to \cref{thm:chaining 1} and conclude that the metric $\Delta_{\tau_\sigma,L_{\partial_l \times \partial_l}}^{\rm min}$ satisfies chaining when restricted to $P_{1}(G)$, which is what we wanted to show.
\end{proof}

\begin{remark}
    The metric $\Delta_{\tau_\sigma,L_{\partial_l \times \partial_l}}^{\rm min}$ is defined on the whole of $\TC_{\tau_\sigma}(\Cred(G,\sigma),\Cred(G,\sigma))$, of which we view $P_1(G)$ as a subspace. If one is only interested in defining a metric on $P_1(G)$ one could instead induce such a metric by considering the correspondence $P_1(G) \simeq S(C^*(G)) \simeq S(\Cred(G))$, where the latter isomorphism holds whenever $G$ is amenable. In this case we similarly get a unital spectral triple $(\Cred(G),\ell^2(G),\partial_l)$ from which we get a metric $\mk_{L_{\partial_l}}$ on $P_1(G)$. We are however interested in determining which trace channels satisfy chaining, and as such the metric $\mk_{L_{\partial_l}}$ on $P_1(G)$ does not suffice as it is not clear if such a metric extends to $\TC_{\tau_\sigma}(\Cred(G,\sigma),\Cred(G,\sigma))$. 
\end{remark}

Combining \cref{thm:stability 2} and \cref{thm:chaining 3} gives many examples of stable sequences of extended metrics that satisfy chaining. We record the following example explicitly.

\begin{example}\label{example:group-example-chaining-and-stability}
    Let $G$ be a countable amenable discrete group, $\sigma \in Z^2 (G, \bbT)$, and $l \colon G \to [0,\infty)$ be a proper length function. This yields spectral triples $(C^*_r(G,\sigma), \ell^2(G), \partial_l)$ and $(\Cred(G,\sigma)^{\rm op}, \ell^2(G), \partial_l)$ as we have seen above. As $G$ is amenable, $C^*_r (G,\sigma) $ is a nuclear $C^*$-algebra. By \cref{thm:stability 2} we will obtain a stable sequence of extended metrics $\{\Delta_{\Tr_n \otimes \tau, L_n}^{\rm min}\}_{n \in \bbN}$ after choosing a spectral triple $(M_n (\bbC), H_n ,\partial_n)$ for each $n \in \bbN$. Such spectral triples have been studied in \cite{DAndreaMartinetti2021}, where they show how to recover the matricial Wasserstein-1 distance from \cite{ChenGeorgiouNingTannenbaum2017} from the point of view of noncommutative geometry. Namely, pick $N \in \bbN$ and,  for each $n\in \bbN$,  set $H_n = \bbC^n \otimes \bbC^N$ and
    \begin{align*}
        \partial_n = \sum_{i=1}^N R_i \otimes e_{i,i}^{(N)}
    \end{align*}
    where $R_i \in M_n (\bbC)$ is a self-adjoint matrix for each $i = 1, \ldots, N$. Letting $M_n (\bbC)$ act on the first factor of $H_n$, we then have a spectral triple $(M_n(\bbC), H_n, \partial_n)$ for each $n \in \bbN$. Recall that for any $\varphi \in S(M_n (\bbC))$, there is $\rho \in M_n(\bbC)$ such that $\varphi = \Tr_n(\rho\, \cdot)$.  D'Andrea and Martinetti show in \cite[Proposition 5]{DAndreaMartinetti2021} that the associated Monge-Kantorovi\v{c} metric can be realized as
    \begin{align*}
        \mk_{\partial_n}(\varphi_1, \varphi_2) = \inf \big\{  \Tr_n (\sqrt{u^* u} ) \mid u_i \in M_n(\bbC), \sum_{i=1}^N [R_i, u_i] = \rho_1 - \rho_2 \big\}
    \end{align*}
    where $\varphi_j = \Tr_n(\rho_j \cdot )$ and $u = (u_1, \ldots , u_N)$. These spectral triples are therefore related to the trace norm. For each $n$, make such a choice of spectral triple $(M_n(\bbC), H_n, \partial_n)$. We then obtain a sequence of seminorms $L_n = L_{(\partial_n \times \partial_n) \times (\partial_l \times \partial_l)} \circ \Sigma_{[23]}$ by taking the appropriate Kasparov products as in \cref{thm:stability 2}, which yield a stable sequence of metrics $\{\Delta_{\Tr_n \otimes \tau, L_n}^{\rm min}\}_{n \in \bbN}$, where each $\Delta_{\Tr_n \otimes \tau, L_n}^{\rm min}$ is a metric on $\TC_{\Tr_n \otimes \tau}(M_n(\bbC) \otimes C^*_r(G, \sigma), M_n(\bbC) \otimes C^*_r(G, \sigma))$. By  \cref{thm:chaining 3}, the metric $\Delta_{\Tr_1 \otimes \tau_\sigma, L_1}^{\rm min} = \Delta_{\tau_\sigma,L_{\partial_l \times \partial_l}}^{\rm min}$ satisfies chaining with respect to the unital completely positive maps arising as multipliers $M_\phi$ for $\phi \in P_1 (G)$.
\end{example}

\subsection*{Data availability}

Data sharing is not applicable to this article as no datasets were generated or analyzed during the current study.

\subsection*{Conflict of interest}

The authors have no conflicts of interest to declare that are relevant to the content of this article.

\printbibliography

\end{document}